\theoremstyle{plain}
   \newtheorem{theorem}{Theorem}[section]
   \newtheorem{lemma}[theorem]{Lemma}
   \newtheorem{conjecture}[theorem]{Conjecture}
\theoremstyle{definition}
   \newtheorem{definition}[theorem]{Definition}
   \newtheorem{example}[theorem]{Example}
   \newtheorem{remark}[theorem]{Remark}
\numberwithin{equation}{section}
\newcommand{\CC}{{\mathbb {C}}}
\newcommand{\RR}{{\mathbb {R}}}
\newcommand{\ZZ}{{\mathbb {Z}}}
\newcommand{\ch}{{\operatorname{ch}}}
\newcommand{\SSYT}{{\rm SSYT}}
\newcommand{\n}{{\bf n}}
\newcommand{\tchi}{\widetilde{\chi}_q}
\newcommand{\tchiqt}{\widetilde{\chi}_{q,t}}
\DeclareMathOperator*{\diag}{diag}
\DeclareMathOperator{\Gr}{Gr}
\newcommand\scalemath[2]{\scalebox{#1}{\mbox{\ensuremath{\displaystyle #2}}}}
\newlength{\mysizetiny}
\newlength{\mysizesmall}
\newlength{\mysize}
\newlength{\mysizelarge}
\newenvironment{nouppercase}{%
	\renewcommand{\uppercasenonmath}[1]{}}{}
\begin{document} 

\title{Tropical Geometry, Quantum Affine Algebras, and Scattering Amplitudes} 

\author{Nick Early and Jian-Rong Li}
\address{Nick Early, Max Planck Institute for Mathematics in the Sciences, Leipzig, Germany.}
\email{\href{mailto:nick.early@mis.mpg.de}{nick.early@mis.mpg.de}}  

\address{Jian-Rong Li, Faculty of Mathematics, University of Vienna, Oskar-Morgenstern-Platz 1, 1090 Vienna, Austria.} 
\email{lijr07@gmail.com}

\date{}
	\begin{nouppercase}
		\maketitle
	\end{nouppercase}

\begin{abstract} 

The goal of this paper is to make a connection between tropical geometry, representations of quantum affine algebras, and scattering amplitudes in physics. The connection allows us to study important and difficult questions in these areas: 
\begin{enumerate} 
\item We give a systematic construction of prime modules (including prime non-real modules) of quantum affine algebras using tropical geometry. We also introduce new objects which generalize positive tropical Grassmannians.

\item We propose a generalization of Grassmannian string integrals in physics, in which the integrand is a product indexed by prime modules of a quantum affine algebra. We give a general formula of $u$-variables using prime tableaux (corresponding to prime modules of quantum affine algebras of type $A$) and Auslander-Reiten quivers of Grassmannian cluster categories.  

\item We study limit $g$-vectors of cluster algebras. This is another way to obtain prime non-real modules of quantum affine algebras systematically. Using limit $g$-vectors, we construct new examples of non-real modules of quantum affine algebras. 
\end{enumerate}

\end{abstract}

\setcounter{tocdepth}{1}
\tableofcontents

\section{Introduction}

Quantum groups were introduced independently by Drinfeld \cite{Dri85} and Jimbo \cite{Jim85} around 1985. A quantum affine algebra is a Hopf algebra that is a $q$-deformation of the universal enveloping algebra of an affine Lie algebra \cite{CP94}. Quantum affine algebras have many applications to physics, for example, to the theory of solvable lattice models in quantum statistical mechanics \cite{Ba82, IFR92}, integrable systems \cite{DFJMN93}.  Quantum affine algebras also have many connections to different areas of mathematics, for example, cluster algebras \cite{HL10}, KLR algebras \cite{KKKO18}, geometric representation theory \cite{Nak04}, representations of affine Hecke algebras and $p$-adic groups \cite{CP97, Gu22, LM}. 

Let $\mathfrak{g}$ be a simple Lie algebra over $\mathbb{C}$ and $U_q(\widehat{\mathfrak{g}})$ the corresponding quantum affine algebra \cite{CP94}. Chari and Pressley have classified simple finite dimensional $U_q(\widehat{\mathfrak{g}})$-modules. They proved that every simple finite dimensional $U_q(\widehat{\mathfrak{g}})$-module corresponds to an $I$-tuple of Drinfield polynomials, where $I$ is the set of vertices of the Dynkin diagram of $\mathfrak{g}$, and equivalently, corresponds to a dominant monomial $M$ in certain formal variables $Y_{i,a}$, $i \in I$, $a \in \CC^*$. The simple $U_q(\widehat{\mathfrak{g}})$-module corresponding to $M$ is denoted by $L(M)$. 

A simple $U_q(\widehat{\mathfrak{g}})$-module $L(M)$ is called prime if it is not isomorphic to $L(M') \otimes L(M'')$ for any non-trivial modules $L(M')$, $L(M'')$, see \cite{CP97}. When $\mathfrak{g}=\mathfrak{sl}_2$, all prime modules of $U_q(\widehat{\mathfrak{sl}_2})$ are Kirillov-Reshetikhin modules \cite{CP97}. Kirillov-Reshetikhin modules are simple $U_q(\widehat{\mathfrak{g}})$-modules which correspond to dominant monomials of the form $Y_{i,s}Y_{i,s+2d_i} \cdots Y_{i,s+2rd_i}$, where $i \in I$, and $d_i$'s are diagonal entries of a diagonal matrix $D$ such that $DC$ is symmetric, $C$ is the Cartan matrix of $\mathfrak{g}$ (we choose $D$ such that $d_i$'s are as small as possible). In general, to classify prime modules of $U_q(\widehat{\mathfrak{g}})$ is an important and difficult problem in representation theory, see for example, \cite{BCM18, CMY13, CP97, HL10,  MouS22}.  

Hernandez and Leclerc \cite{HL10} made a breakthrough to the problem of constructing prime modules of $U_q(\widehat{\mathfrak{g}})$ using the theory of cluster algebras \cite{FZ02}. For every simple Lie algebra $\mathfrak{g}$ over $\CC$, they constructed a cluster algebra with initial cluster variables given by certain Kirillov-Reshetikhin modules. Using cluster algebras, prime modules can be generated using a procedure called mutation. The prime modules generated in this way are cluster variables. They conjectured that all cluster variables (resp. cluster monomials) are real prime modules (resp. real modules), and all real prime modules (resp. real modules) are cluster variables (resp. cluster monomials). Here a simple $U_q(\widehat{\mathfrak{g}})$-module $L(M)$ is called real if $L(M) \otimes L(M)$ is still simple \cite{Lec}. One direction of their conjecture ``all cluster variables (resp. cluster monomials) are real prime modules (resp. real modules)'' is proved by Qin in \cite{Qin17} and by Kang, Kashiwara, Kim, Oh, and Park in \cite{KKKO18, KKOP20, KKOP21, KKOP22}. The other direction ``all real prime modules (resp. real modules) are cluster variables (resp. cluster monomials)'' of the conjectural is widely open \cite{HL21}. It is shown in \cite{DLL19} that all prime snake modules of types $A, B$ are cluster variables and in \cite{DR20} that all snake modules of types $A, B$ are cluster monomials. 

Recently, Lapid and Minguez \cite{LM} have classified all real simple $U_q(\widehat{\mathfrak{sl}_k})$-modules (in the language of representations of $p$-adic groups) satisfying a certain condition called regular. This classification is surprisingly related to the classification of rationally smooth Schubert varieties in type $A$ flag varieties. They also gave more conjectures and results in a more recent work \cite{LM20}. 

By the results in \cite{KKKO18, KKOP20, KKOP21, KKOP22, Qin17}, using the procedure of mutations, one can generate a large family of prime modules (these prime modules are cluster variables). On the other hand, there are many prime modules which are not real (and thus not cluster variables). These non-real modules are also important in applications. For example, it is shown in \cite{ALS21, DFGK, HP20} that non-real prime tableaux (corresponding to non-real prime modules by \cite{CDFL}) determine the so-called square roots and are used to construct algebraic letters in the computations of Feynman integrals in the study of scattering amplitudes in $\mathcal{N}=4$ super Yang-Mills theory in physics.   

The goal of this paper is to make a connection between tropical geometry, representations of quantum affine algebras, and scattering amplitudes in physics. The connection allows us to study important and difficult questions in these areas: use tropical geometry to construct prime modules (conjecturally we can obtain all prime modules including prime non-real modules) of quantum affine algebras, to propose a generalization of Grassmannian string integrals in physics.

\subsection{} \label{subsec:tropical geometry and grassmannian cluster algebra}

First consider the case where $\mathfrak{g}$ is of type $A$, i.e. $\mathfrak{g} = \mathfrak{sl}_k$ for some positive integer $k$. Simple modules of $U_q(\widehat{\mathfrak{sl}_k})$ correspond to dual canonical basis elements of a quotient $\CC[\Gr(k,n,\sim)]$ of the Grassmannian cluster algebra $\CC[\Gr(k,n)]$ \cite{HL10, CDFL}. We define Newton polytopes ${\bf N}_{k,n}^{(d)}$ by using the formula for dual canonical basis elements of $\CC[\Gr(k,n)]$, \cite{CDFL} as follows. Denote by $\mathcal{T}^{(0)}_{k,n}$ the set of all one-column tableaux which are cyclic shifts of the one-column tableau with entries $\{1,2,\ldots, k-2,k\}$. For $d \ge 0$, we define (see Definition \ref{def:newton polytopes for grassmannian cluster algebras version 1})
\begin{align*}
{\bf N}^{(d)}_{k,n} = \text{Newt}\left(\prod_{T \in \mathcal{T}^{(d)}_{k,n}} \text{ch}_T(x_{i,j})\right),
\end{align*}
where $\mathcal{T}^{(d)}_{k,n}$ ($d \ge 1$) is the set of tableaux which correspond to facets of ${\bf N}^{(d-1)}_{k,n}$, $\ch_T(x_{ij})$ is the evaluation of $\ch_T = \ch(T)$ on the web matrix \cite{SW05} (see Section \ref{subsec:tropical grassmannians}) and $\ch(T)$ is given in Theorem 5.8 and Definition 5.9 in \cite{CDFL}, see Section \ref{subsec:Grassmannian cluster algebras and semistandard Young tableaux}. 

The facets of the Newton polytope ${\bf N}_{k,n}^{(0)}$ have been classified in recent work of the first author \cite{E2021}; there are exactly of them $\binom{n}{k}-n$, one for each of $\binom{n}{k}-n$ generalized positive roots. 

We give a procedure to construct the highest $l$-weights (equivalently, the tableaux corresponding to the highest $l$-weight monomials, see \cite{CDFL}) of simple $U_q(\widehat{\mathfrak{sl}}_k)$-modules from facets of ${\bf N}_{k,n}^{(d)}$ explicitly, see Section \ref{sec:rays to prime modules}. We conjecture that for every $k \le n$ and $d \ge 0$, every facet of ${\bf N}_{k,n}^{(d)}$ gives a prime module of $U_q(\widehat{\mathfrak{sl}_k})$ and every prime $U_q(\widehat{\mathfrak{sl}_k})$-module corresponds to a facet of the Newton polytope ${\bf N}^{(d)}_{k,n}$ for some $d$, see Conjecture \ref{conj:prime modules are rays}). The procedure gives a systematical way to construct prime modules of $U_q(\widehat{\mathfrak{sl}_k})$. 

We prove Conjecture \ref{conj:prime modules are rays} in the case of ${\bf N}_{3,9}^{(1)}$ (resp. ${\bf N}_{4,8}^{(1)}$) in Section \ref{sec:Coarsest subdivisions and prime modules}: all the facets of ${\bf N}_{3,9}^{(1)}$ (resp. ${\bf N}_{4,8}^{(1)}$) correspond to prime modules of $U_q(\widehat{\mathfrak{sl}_3})$ (resp. $U_q(\widehat{\mathfrak{sl}_4})$). 

The study of Newton polytopes of Laurent expansions of cluster variables was initiated by Sherman
and Zelevinsky in their study of rank $2$ cluster algebras \cite{SZ04}. It was further developed in \cite{BS18, Fei22, Kal14, LLR20, LP22, MS22}. Our definition of Newton polytopes in this paper involve not only cluster variables but also other prime elements in the dual canonical basis of cluster algebras. 

We also introduce another version (non-recursive) of Newton polytopes ${\bf N'}_{k,n}^{(d)}$ for Grassmannian cluster algebras, see Definition \ref{def:another version of Newton polytopes for Grkn}.

The normal fans $\mathcal{N}({\bf N}_{k,n}^{(d)})$ are generalizations of tropical Grassmannians ${\rm Trop}^+ G(k,n)$ (see \cite{SW05}), see Section \ref{subsec:tropical fans for Grassmannian cluster algebras}.  It would be an interesting and important problem to find a combinatorial model which describes the facets of $\mathcal{N}({\bf N}_{k,n}^{(d)})$, and to relate them to scattering amplitudes.

\subsection{}
We generalize the construction in Section \ref{subsec:tropical geometry and grassmannian cluster algebra} to general quantum affine algebras $U_q(\widehat{\mathfrak{g}})$. For any simple Lie algebra $\mathfrak{g}$ over $\CC$ and $\ell \ge 1$, we define a sequence of Newton polytopes ${\bf N}^{(d)}_{\mathfrak{g}, \ell}$ ($d \in \ZZ_{\ge 0}$) recursively. Let $\mathcal{M}$ be the set of all equivalence classes of Kirillov-Reshetikhin modules of $U_q(\widehat{\mathfrak{g}})$ in Hernandez and Leclerc's category $\mathcal{C}_{\ell}$ \cite{HL10, HL16}. Denote $\mathcal{M}^{(0)}=\mathcal{M}$ and $\mathcal{M}^{(d+1)}$ ($d \ge 0$) the collection of equivalence classes of $U_q(\widehat{\mathfrak{g}})$-modules which correspond to facets of
\begin{align*}
{\bf N}^{(d)}_{\mathfrak{g}, \ell} := \text{Newt}\left(\prod_{[L(M)] \in \mathcal{M}^{(d)}} \tchi(L(M))/M \right),
\end{align*} 
where $\tchi(L(M))$ is the truncated $q$-characters \cite{FR98} of the $U_q(\widehat{\mathfrak{g}})$-module $L(M)$, see Section \ref{sec:Newton polytopes for quantum affine algebras}. The definition of ${\bf N}^{(d)}_{\mathfrak{g}, \ell}$ is recursive and at each step, we give an explicit construction of simple $U_q(\widehat{\mathfrak{g}})$-modules from facets of ${\bf N}^{(d)}_{\mathfrak{g}, \ell}$.

We conjecture that (1) for any $d \ge 0$, every facet of ${\bf N}^{(d)}_{\mathfrak{g}, \ell}$ corresponds to a prime $U_q(\widehat{\mathfrak{g}})$-module and (2) every prime $U_q(\widehat{\mathfrak{g}})$-module corresponds to a facet of the Newton polytope ${\bf N}^{(d)}_{\mathfrak{g}, \ell}$ for some $d$, see Conjecture \ref{conj:newton polytope and prime modules conjecture quantum affine algebra case}. We also introduce another version (non-recursive) of Newton polytopes ${\bf N'}_{\mathfrak{g}, \ell}^{(d)}$ for representations of quantum affine algebras, see Definition \ref{def:another version of Newton polytopes for quantum affine algebras}. 

\subsection{}
In 1969, Z. Koba and K. Nielsen \cite{KN69} introduced an integral representation for the Veneziano-type $n$-point function, parametrized so that a quantum field theoretic phenomenon known as crossing-symmetry, which asserts that particles are indistinguishable from anti-particles traveling back in time, is manifest.  The integrand is expressed as a product of certain cross-ratios, with exponents the kinematic Mandelstam parameters; the cross-ratios are solutions to a particularly combinatorially nice set of binomial algebraic equations, of the form 
$$u_{j_1,j_3} + \prod_{\{j_2,j_4\}} u_{j_2,j_4} = 1,$$
where the product is over all pairs $\{j_2,j_4\}$ such that $j_1<j_2<j_3<j_4$ up to cyclic rotation. These equations, later rediscovered by Brown \cite{Brown} in the context of multiple zeta values and moduli spaces, characterize a certain partial compactification of the moduli space $\mathfrak{M}_{0,n}$ of $n$ distinct points on the Riemann sphere, which is closely related to the tropical Grassmannian ${\rm Trop}G(2,n)$, and in our context a certain subset of it, the \textit{positive} tropical Grassmannian ${\rm Trop}^+G(2,n)$.  For more recent work which is important in our context, see \cite{AHL2019Stringy, AHL2021}.

A generalization of the Koba-Nielsen string integral was announced by Arkani-Hamed, Lam and Spradlin \cite{ALS21} using finite-type (Grassmannian) cluster algebras, where type $A$ cluster algebra corresponds to usual string amplitudes, and developed in detail in \cite{AHL2021}.  However, Grassmannian cluster algebras for $\Gr(k,n)$ with $(k-2)(n-k-2) > 3$ not only have infinitely-many cluster variables, but it turns out that not all physically relevant elements of Lusztig's dual canonical basis can be constructed using a finite sequence of cluster mutations (there are prime non-real elements in the dual canonical basis).  Finding a systematic description of prime modules is also a deep and important problem in theoretical physics, since it is exactly the cases $k=4$ and $n\ge 8$ which are of interest to amplitudes.

In the theory of quantum affine algebras, prime modules do not have an analog in the representation theory of simple Lie algebras: a module is prime if it cannot be decomposed nontrivially as the tensor product of two other modules.  The fact that representations of quantum affine algebras possess both additive and multiplicative structures appears to be very important in our context.

The main physical contribution of this work is to propose the definition of a Grassmannian string integral (as in \cite{AHL2019Stringy}) with an integrand involving a product which is now in general infinite; the key point is that the integrand should be indexed by prime tableaux, in which case our results will be directly applicable to study physical aspects of the integral.  Our proposal is valid for any Grassmannian $\Gr(k,n)$, where $k=2$ corresponds to the usual Koba-Nielsen string integral.  So for $2 \le k \le n-2$ and every $d\ge 1$, we define
\begin{eqnarray*} 
\mathbf{I}^{(d)}_{k,n} & =&  (\alpha')^{a}\int_{\left(\mathbb{R}_{>0}^{n-k-1}\right)^{\times (k-1)}}\left(\prod_{(i,j)}\frac{dx_{i,j}}{x_{i,j}}\right)\left(\prod_{T} \ch_T^{-\alpha'c_T}(x_{i,j})\right),
\end{eqnarray*}
where the second product is over all tableaux $T$ such that the face $\mathbf{F}_T$ corresponding to $T$ (see Section \ref{subsec: the face corresponding to T}) is a (codimension one) facet of $\mathbf{N}^{(d-1)}_{k,n}$, $a$, $\alpha'$, $c_T$ are some parameters, see Section \ref{subsec:stringy integeral for Grkn}, Formulas (\ref{eq: stringy integral finite d}), (\ref{eq: stringy integral infinite}), (\ref{eq: stringy integral using tableaux up to certain columns}), for more details.  We point out that the character polynomials $ \ch_T$ are manifestly positive in the interior of the totally nonnegative Grassmannian, see \cite[Section 5.3]{CDFL}.

We give a general formula of $u$-variables using prime tableaux (corresponding to prime modules of quantum affine algebras of type $A$) and Auslander-Reiten quivers of Grassmannian cluster categories ${\rm CM}(B_{k,n})$ \cite{JKS}. For every mesh 
\begin{align*}
\xymatrix@-6mm@C-0.2cm{
& & T_1 \ar[rdd]\\
 \\
&  S  \ar[r]     \ar[rdd]\ar[ruu]&  \ar[r] \vdots   & S'  \\
 \\
& & T_r \ar[ruu]  \\
}    
\end{align*} 
in the Auslander-Reiten quiver of ${\rm CM}(B_{k,n})$, where we label the vertices by tableaux, we define the corresponding $u$-variable as 
\begin{align} \label{eq:u variables in Introduction}
u_{S} = \frac{\prod_{i=1}^{r} \ch_{T_i}}{\ch_{S} \ch_{S'}},
\end{align} 
see Definition \ref{def:u-variables}.  

We conjecture that there are unique integers $a_{T,T'}$, where $T$, $T'$ are prime tableaux, such that $u$-variables (\ref{eq:u variables in Introduction}) are solutions of the system of equations 
\begin{align} \label{eq:u-equations}
u_T  + \prod_{T' \in {\rm PSSYT}_{k,n}} u_{T'}^{a_{T, T'}} = 1,
\end{align} 
where ${\rm PSSYT}_{k,n}$ is the set of all prime tableaux with $k$ rows and with entries in $[n]$, and $T$ runs over all non-frozen prime tableaux in ${\rm PSSYT}_{k,n}$, see Conjecture \ref{conj:u variables are solutions of u equations general case for Grkn}. The equations (\ref{eq:u-equations}) are called $u$-equations. 

General $u$-equations have been introduced in \cite{AFPST21_surface, AFPST21, AFPST23a, AFPST23b} in the setting of representations of quiver with relations and cluster categories of finite type. In our paper, we work in the setting of the Grassmannian cluster algebra $\CC[\Gr(k,n)]$ and the Grassmannian cluster category ${\rm CM}(B_{k,n})$ \cite{JKS}. 

We also give a definition of stringy integral for Grassmannian cluster algebras using $u$-variables. Denote by ${\rm PSSYT}_{k,n}$ the set of all prime tableaux of rectangular shapes and with $k$ rows and with entries in $[n]$. For $k \le n$, we define
    \begin{eqnarray*}
        \mathbf{I}^{(\infty)}_{k,n} & = & (\alpha')^{a}\int_{\left(\mathbb{R}_{>0}^{n-k-1}\right)^{\times (k-1)}} \prod_{i,j} \frac{dx_{i,j}}{x_{i,j}} \prod_{T \in {\rm PSSYT}_{k,n}} (u_T)^{\alpha' U_T},  
    \end{eqnarray*}
where $\alpha'$, $U_T$ are some parameters, and $u_T$ is the $u$-variable corresponding to a prime tableau $T$, See Definition \ref{def:stringy integeral for Grassmannians using u-variables}. This new integrand involves an infinite product of $u$-variables, indexed by prime tableaux. 

 
We also generalize the stringy integral to the setting of general quantum affine algebras and define stringy integrals using prime modules of quantum affine algebras, see Section \ref{subsec:stringy integerals for quantum affine algebras}.

\subsection{}
Recently, Arkani-Hamed, Lam, Spradlin \cite{ALS21}, and Drummond, Foster, G\"{u}rdo\u{g}an, Kalousios \cite{DFGK}, and Henke, Papathanasiou \cite{HP20}, have constructed limit $g$-vectors for the Grassmannian cluster algebra $\CC[\Gr(k,n)]$ using infinite sequence of mutations. These limit $g$-vectors do not correspond to cluster variables in $\CC[\Gr(k,n)]$ but correspond to prime elements in the dual canonical basis of $\CC[\Gr(k,n)]$.

Motivated by these works, we define limit $g$-vectors for any cluster algebra of infinite type, see Definition \ref{def: limit g-vectors}. We say that a facet of a Newton polytope for a quantum affine algebra is a limit facet if the facet corresponds to a $U_q(\widehat{\mathfrak{g}})$-module whose $g$-vector is a limit $g$-vector of the cluster algebra for $U_q(\widehat{\mathfrak{g}})$. We say that a $U_q(\widehat{\mathfrak{g}})$-module corresponds to a limit $g$-vector if the $g$-vector of the module is a limit $g$-vector of the cluster algebra for $U_q(\widehat{\mathfrak{g}})$. We conjecture that every module corresponding to a limit $g$-vector is prime and non-real. This is another way to obtain prime non-real $U_q(\widehat{\mathfrak{g}})$-modules systematically. 

Using limit $g$-vectors, we construct new examples of non-real modules of quantum affine algebras. As an example, we prove that the module $L(Y_{2,-4}Y_{2,0})$ in type $D_4$ is non-real, see Section \ref{subsec:D4 nonreal example}. 

\subsection{}
The paper is organized as follows. In Section \ref{sec:quantum affine algebras}, we recall results of quantum affine algebras and Hernandez-Leclerc's category $\mathcal{C}_{\ell}$. In Section \ref{sec:Grassmannian cluster algebras and Tropical Grassmannians}, we recall results of Grassmannian cluster algebras and tropical Grassmannians. In Section \ref{sec:recursive definition of Newton polytopes}, we define a sequence of Newton polytopes and tropical fans for Grassmannian cluster algebras. In Section \ref{sec:tableaux and generalized root polytopes}, we study relations between semistandard Young tableaux of rectangular shapes and generalized root polytopes. In Section \ref{sec:rays to prime modules}, we construct prime modules from facets. 
In Section \ref{sec:Coarsest subdivisions and prime modules}, we prove Conjecture \ref{conj:prime modules are rays} in the cases of ${\bf N}_{3,9}^{(1)}$ and ${\bf N}_{4,8}^{(1)}$. In Section \ref{sec:Newton polytopes for quantum affine algebras}, we define Newton polytopes and tropical fans for general quantum affine algebras. In Section \ref{sec:physics}, we generalize Grassmannian string integrals and study $u$-equations and $u$-variables. In Section \ref{sec:discussion}, we discuss some future directions of the paper. 

\subsection{Acknowledgements}
The authors would like to thank Nima Arkani-Hamed, Freddy Cachazo, James Drummond, \"{O}mer G\"{u}rdo\u{g}an, Min Huang, 
Jiarui Fei, Lecheng Ren, Marcus Spradlin, and Anastasia Volovich for helpful discussions, and Georgios Papathanasiou for helpful comments.

Data sharing not applicable to this article as no datasets were generated or analysed during the current study.

This research was supported in part by the Munich Institute for Astro-, Particle and BioPhysics (MIAPbP) which is funded by the Deutsche Forschungsgemeinschaft (DFG, German Research Foundation) under Germany's Excellence Strategy–EXC-2094-390783311. JRL is supported by the Austrian Science Fund (FWF): P-34602, Grant DOI: 10.55776/P34602, and PAT 9039323, Grant-DOI 10.55776/PAT9039323. This research received funding from the European Research Council (ERC) under the European Union’s Horizon 2020 research and innovation programme (grant agreement No 725110), Novel structures in scattering amplitudes.  

\section{Quantum Affine Algebras and Hernandez-Leclerc's Category $\mathcal{C}_{\ell}$}
\label{sec:quantum affine algebras}

In this section, we recall results of quantum affine algebras \cite{CP94, FR98}, Hernandez-Leclerc's category $\mathcal{C}_{\ell}$ and cluster algebra structure on the Grothendieck ring of $\mathcal{C}_{\ell}$ \cite{HL10}. 

\subsection{Quantum Affine Algebras} \label{subsec:quantum affine algebras, qcharacters}

Let $\mathfrak{g}$ be a simple finite-dimensional Lie algebra and $I$ the set of vertices of the Dynkin diagram of $\mathfrak{g}$. Denote by $\{ \omega_i: i \in I \}$, $\{\alpha_i : i \in I\}$, $\{\alpha_i^{\vee} : i \in I\}$ the set of fundamental weights, the set of simple roots, the set of simple coroots, respectively. Denote by $P$ the integral weight lattice and
$P^+$ the set of dominant weights. The Cartan matrix is $C = (\alpha_j(\alpha_i^{\vee}))_{i,j \in I}$. Let $D = \diag(d_i: i \in I)$, where $d_i$'s are minimal positive integers such that $DC$ is symmetric.  

Let $z$ be an indeterminate. The quantum Cartan matrix $C(z)$ is defined as follows: for $i \in I$, $C_{i,i}(z) = z^{d_i} + z^{-d_i}$, and for $i \ne j \in I$, $C_{ij}(z) = [C_{ij}]_z$, where $[m]_z = \frac{z^m - z^{-m}}{z-z^{-1}}$, see \cite{FR98}.  

The {\sl quantum affine algebra} $U_q(\widehat{\mathfrak{g}})$ is a Hopf algebra that is a $q$-deformation of the universal enveloping algebra of $\widehat{\mathfrak{g}}$ \cite{Drin87, Jim85}. In this paper, we take $q$ to be a non-zero complex number which is not a root of unity. 

Denote by $\mathcal{P}$ the free abelian group generated by formal variables $Y_{i, a}^{\pm 1}$, $i \in I$, $a \in \CC^*$, denote by $\mathcal{P}^+$ the submonoid of $\mathcal{P}$ generated by $Y_{i, a}$, $i \in I$, $a \in \CC^*$. Let $\mathcal{C}$ denote the monoidal category of finite-dimensional representations of the quantum affine algebra $U_q(\widehat{\mathfrak{g}})$.

Any finite dimensional simple object in $\mathcal{C}$ is a highest $l$-weight module with a highest $l$-weight $m \in \mathcal{P}^+$, denoted by $L(m)$ (see \cite{CP95a}). The elements in $\mathcal{P}^+$ are called dominant monomials.  

Frenkel and Reshetikhin \cite{FR98} introduced the $q$-character map which is an injective ring morphism $\chi_q$ from the Grothendieck ring of $\mathcal{C}$ to $\mathbb{Z}\mathcal{P} = \mathbb{Z}[Y_{i, a}^{\pm 1}]_{i\in I, a\in \mathbb{C}^{\times}}$. For a $U_q(\widehat{\mathfrak{g}})$-module $V$, $\chi_q(V)$ encodes the decomposition of $V$ into common generalized eigenspaces for the action of a large commutative subalgebra of $U_q(\widehat{\mathfrak{g}})$ (the loop-Cartan subalgebra). These generalized eigenspaces are called $l$-weight spaces and generalized eigenvalues are called $l$-weights. One can identify $l$-weights with monomials in $\mathcal{P}$ \cite{FR98}. Then
the $q$-character of a $U_q(\widehat{\mathfrak{g}})$-module $V$ is given by (see \cite{FR98})
\begin{align*}
\chi_q(V) = \sum_{  m \in \mathcal{P}} \dim(V_{m}) m \in \mathbb{Z}\mathcal{P},
\end{align*}
where $V_{m}$ is the $l$-weight space with $l$-weight $m$. 

Let $\mathcal{Q}^+$ be the monoid generated by
\begin{align} \label{eq:Aia}
A_{i, a} = Y_{i, aq_{i}}Y_{i, aq_{i}^{-1}} \left(\prod_{j:C_{ji}=-1}Y_{j, a}^{-1}\right) \left(\prod_{j:C_{ji}=-2}Y_{j, aq}^{-1}Y_{j, aq^{-1}}^{-1}\right) \left(\prod_{j:C_{ji}=-3}Y_{j, aq^{2}}^{-1}Y_{j, a}^{-1}Y_{j, aq^{-2}}^{-1}\right),
\end{align}
where $q_i=q^{d_i}$, $i \in I$. 
There is a partial order $\preccurlyeq$ on $\mathcal{P}$ (cf. \cite{FR98}) defined by 
$ m' \preccurlyeq m \text{ if and only if } m {m'}^{-1} \in \mathcal{Q}^{+}$. 

For $i \in I$, $a \in \mathbb{C}^{\times}$, $k \in \ZZ_{\ge 1}$, the modules
\begin{align*}
X_{i,a}^{(k)} := L(Y_{i,a} Y_{i,aq_i^2} \cdots Y_{i,aq_i^{2k-2}})
\end{align*}
are called {\sl Kirillov-Reshetikhin modules}, where $q_i=q^{d_i}$ and $d_i$'s are entries of the matrix $D$ defined in the beginning of this section. The modules $X_{i,a}^{(1)} = L(Y_{i,a})$ are called {\sl fundamental modules}.

\subsection{Hernandez-Leclerc's Category $\mathcal{C}_{\ell}$ and Truncated $q$-characters} \label{subsec:HL category and cluster algebra}

We recall the definition of Hernandez-Leclerc's category $\mathcal{C}_{\ell}$ \cite{HL10}. 

For integers $a \le b$, we denote $[a,b] = \{i: a \le i \le b\}$ and $[a] = \{i: 1 \le i \le a\}$. 

Let $\mathfrak{g}$ be a simple Lie algebra over $\CC$ and let $\mathcal{C}$ be the category of finite-dimensional $U_q(\widehat{\mathfrak{g}})$-modules. In \cite{HL10}, \cite{HL16}, Hernandez and Leclerc introduced a full subcategory $\mathcal{C}_{\ell} = \mathcal{C}_{\ell}^{\mathfrak{g}}$ of $\mathcal{C}$ for every $\ell \in \mathbb{Z}_{\geq 0}$. Let $I$ be the set of vertices of the Dynkin diagram of $\mathfrak{g}$. We fix $a \in \CC^*$ and denote $Y_{i,s} = Y_{i,aq^s}$, $i \in I$, $s \in \ZZ$. For $\ell \in \ZZ_{\ge 0}$, denote by $\mathcal{P}_\ell$ the subgroup of $\mathcal{P}$ generated by $Y_{i,\xi(i)-2r}^{\pm 1}$, $i \in I$, $r \in [0, d \ell]$, where $d$ is the maximal diagonal element in the diagonal matrix $D$, and $\xi: I \to \ZZ$ is a certain function called height function defined in Section 2.2 in \cite{HL15} and Definition 4.1 in \cite{FHOO22}. Denote by $\mathcal{P}^+_\ell$ the submonoid of $\mathcal{P}^+$ generated by $Y_{i,\xi(i)-2r}$, $i \in I$, $r \in [0, d \ell]$. An object $V$ in $\mathcal{C}_{\ell}$ is a finite-dimensional $U_q(\widehat{\mathfrak{g}})$-module which satisfies the condition: for every composition factor $S$ of $V$, the highest $l$-weight of $S$ is a monomial in $\mathcal{P}^+_\ell$, \cite{HL10}. Simple modules in $\mathcal{C}_{\ell}$ are of the form $L(M)$ (see \cite{CP94}, \cite{HL10}), where $M \in \mathcal{P}_{\ell}^+$.

\begin{remark}
In \cite{HL10}, fundamental modules in $\mathcal{C}_{\ell}$ are $L(Y_{i, -\xi(i)-2r})$, $i \in I$, $r \in [0, \ell]$, see Definition 3.1 in \cite{HL10}. In this paper, we slightly modify $\mathcal{C}_{\ell}$ such that the fundamental modules in $\mathcal{C}_{\ell}$ are $L(Y_{i, \xi(i)-2r})$, $i \in I$, $r \in [0, d \ell]$, where $d$ is the maximal diagonal element in the diagonal matrix $D$.
\end{remark}

For a module $V$ in $\mathcal{C}_{\ell}$, the truncated $q$-character $\tchi(V)$ is the Laurent polynomial obtained from the $q$-character $\chi_q(V)$ defined in Section \ref{subsec:quantum affine algebras, qcharacters} by removing all monomials which have a factor $Y_{i,s}$ or $Y_{i,s}^{-1}$, where $Y_{i,s}$ is not in $\mathcal{P}_{\ell}^+$, see \cite{HL15, HL16}. 

Hernandez and Leclerc constructed a cluster algebra for every category $\mathcal{C}_{\ell}$ of every quantum affine algebra $U_q(\widehat{\mathfrak{g}})$ \cite{HL10, HL16}. The cluster algebra for $\mathcal{C}_{\ell}$ of $U_q(\widehat{\mathfrak{sl}_k})$ is isomorphic to the cluster algebra for a certain quotient $\CC[\Gr(k,n,\sim)]$ (see Section \ref{subsec:Grassmannian cluster algebras and semistandard Young tableaux}) of the Grassmannian cluster algebra $\CC[\Gr(k,n)]$ \cite{CDFL, HL10, Sco06}, $n=k+\ell+1$.

\section{Grassmannian Cluster Algebras and Tropical Grassmannians}	 \label{sec:Grassmannian cluster algebras and Tropical Grassmannians}
In this section, we recall results of Grassmannian cluster algebras, \cite{Sco06, CDFL} and tropical Grassmannians \cite{SS04, SW05}. 

\subsection{Grassmannian Cluster Algebras and Semistandard Young Tableaux}\label{subsec:Grassmannian cluster algebras and semistandard Young tableaux}

For $k \le n$, the Grassmannian $\Gr(k,n)$ is the set of $k$-dimensional subspaces in an $n$-dimensional vector space. In this paper, we denote by $\Gr(k,n)$ (the affine cone over) the Grassmannian of $k$-dimensional subspaces in $\CC^n$, and denote by $\CC[\Gr(k,n)]$ its coordinate ring. This algebra is generated by Pl\"{u}cker coordinates 
\begin{align*}
p_{i_1, \ldots, i_{k}}, \quad 1 \leq i_1 < \cdots < i_{k} \leq n.
\end{align*}
In this paper, we use $p_J$ for Pl\"{u}cker coordinates and $P_J$ for its tropical version, where $J$ is a $k$-element subset of $[n]$.

It was shown by Scott \cite{Sco06} that the ring $\CC[\Gr(k,n)]$ has a cluster algebra structure. Define $\CC[\Gr(k,n,\sim)]$ to be the quotient of $\CC[\Gr(k,n)]$ by the ideal generated by $P_{i, \ldots, i+k-1}-1$, $i \in [n-k+1]$. In \cite{CDFL}, it is shown that the elements in the dual canonical basis of $\CC[\Gr(k,n,\sim)]$ are in bijection with semistandard Young tableaux in ${\rm SSYT}(k, [n],\sim)$.

A semistandard Young tableau is a Young tableau with weakly increasing rows and strictly increasing columns. For $k,n \in \ZZ_{\ge 1}$, we denote by ${\rm SSYT}(k, [n])$ the set of rectangular semistandard Young tableaux with $k$ rows and with entries in $[n]$ (with arbitrarly many columns). The empty tableau is denoted by $\mathds{1}$. 

For $S,T \in {\rm SSYT}(k, [n])$, let $S \cup T$ be the row-increasing tableau whose $i$th row is the union of the $i$th rows of $S$ and $T$ (as multisets), \cite{CDFL}. It is shown in Section 3 in \cite{CDFL} that $S \cup T$ is semistandard for any pair of semistandard tableaux $S, T$.  

We call $S$ a factor of $T$, and write $S \subset T$, if the $i$th row of $S$ is contained in that of $T$ (as multisets), for $i \in [k]$. In this case, we define $\frac{T}{S}=S^{-1}T=TS^{-1}$ to be the row-increasing tableau whose $i$th row is obtained by removing that of of $S$ from that of $T$ (as multisets), for $i \in [k]$. 

A tableau $T \in {\rm SSYT}(k, [n])$ is trivial if each entry of $T$ is one less than the entry below it. For any $T \in {\rm SSYT}(k, [n])$, we  denote by $T_{\text{red}} \subset T$ the semistandard tableau obtained by removing a maximal trivial factor from $T$. For a trivial $T$, one has $T_{\text{red}} = \mathds{1}$. 

Let ``$\sim$'' be the equivalence relation on $S, T \in {\rm SSYT}(k, [n])$ defined by: $S \sim T$ if and only if $S_{\text{red}} = T_{\text{red}}$. We denote by ${\rm SSYT}(k, [n],\sim)$ the set of $\sim$-equivalence classes.

The elements in the dual canonical basis of $\CC[\Gr(k,n,\sim)]$ are in bijection with simple modules in the category $\mathcal{C}_{\ell}$ of $U_q(\widehat{\mathfrak{sl}_k})$ in Section \ref{subsec:Grassmannian cluster algebras and semistandard Young tableaux}, see \cite{HL10, CDFL}.

A one-column tableau is called a fundamental tableau if its content is $[i,i+k] \setminus \{r\}$ for $r \in \{i+1, \ldots, i+k-1\}$. 
Any tableau in $\SSYT(k,[n])$ is $\sim$-equivalent to a unique semistandard tableau whose columns are fundamental tableaux, see Lemma 3.13 in \cite{CDFL}.

By \cite[Theorem 5.8]{CDFL}, for every $T \in \SSYT(k,[n])$, the corresponding element $\widetilde{\ch}(T)$ in the dual canonical basis of $\CC[\Gr(k,n,\sim)]$ is given by 
\begin{align}\label{eq:formula of ch(T)}
\widetilde{\ch}(T) = \sum_{u \in S_m} (-1)^{\ell(uw_T)} {\bf p}_{uw_0, w_Tw_0}(1) p_{u; T'} \in \CC[\Gr(k,n,\sim)],
\end{align}
where $m$ is the number of columns of $T'$, $T'$ is the tableau whose columns are fundamental tableaux and such that $T \sim T'$, $p_{u ; T'}$ is a certain monomial of Pl\"{u}cker coordinates, $w_T$ is a certain permutation in $S_m$, ${\bf p}_{u,v}(q)$ is a Kazhdan-Lusztig polynomial, see \cite{CDFL}. Let $T'' = T' T^{-1}$ and define $\ch(T) = \frac{1}{p_{T''}} \widetilde{\ch}(T)$, where $p_{T''} = p_{{T''}_1} \cdots p_{{T''}_r}$, ${T''}_i$'s are columns of $T''$. We also denote $\ch_T = \ch(T)$. 

\subsection{Relation between Dominant Monomials and Tableaux} \label{subsec:dominant monomials and tableaux}
 
In Section \ref{subsec:HL category and cluster algebra}, we recalled Hernandez and Leclerc's category $\mathcal{C}_{\ell}$. It is shown in Theorem 3.17 in \cite{CDFL} that in the case of $\mathfrak{g}=\mathfrak{sl}_k$, the monoid $\mathcal{P}^+_\ell$ (we take the height function to be $\xi(i)=i-2$, $i \in [k-1]$) of dominant monomials is isomorphic to the monoid of semistandard Young tableaux $\SSYT(k, [n], \sim)$, $n = k+\ell+1$. The correspondence of dominant monomials and tableaux is induced by the following map sending variables $Y_{i,s}$ to fundamental tableaux:
\begin{equation}\label{eq:monomToTableaux} 
Y_{i,s} \mapsto T_{i,s}, 
\end{equation}
where $T_{i,s}$ is a one-column tableau consisting of entries $\frac{i-s}{2}, \frac{i-s}{2}+1, \ldots, \frac{i-s}{2}+k-i-1, \frac{i-s}{2}+k-i+1, \ldots, \frac{i-s}{2}+k$. We denote the monomial corresponding to a tableau $T$ by $M_T$ and denote the tableau corresponding to a monomial $M$ by $T_M$. Note that by the definition of $\mathcal{C}_{\ell}$ and the choice of the height function $\xi(i)=i-2$, $i \in [k-1]$, the indices of $Y_{i, s}$ in the highest $l$-weight monomials of simple modules in $\mathcal{C}_{\ell}$ satisfy $i-s \pmod 2=0$. 

When computing the monomial corresponding to a given tableau, we first decompose the tableau into a union of fundamental tableaux. Then we send each fundamental tableau to the corresponding $Y_{i,s}$. For example, the tableaux $[[1,2,4,6],[3,5,7,8]]$ (each list is a column of the tableau), $[[1,3,5,7],[2,4,6,8]]$ correspond to the modules
\begin{align*}
L(Y_{2,-6}Y_{1,-3}Y_{3,-3}Y_{2,0}), \quad L(Y_{1,-7}Y_{2,-4}Y_{1,-5}Y_{3,-1}Y_{2,-2}Y_{3,1}),
\end{align*}
respectively.   

Recall that a simple $U_q(\widehat{\mathfrak{g}})$-module $L(M)$ is called prime if it is not isomorphic to $L(M') \otimes L(M'')$ for any non-trivial modules $L(M')$, $L(M'')$ \cite{CP97}. A simple $U_q(\widehat{\mathfrak{g}})$-module $L(M)$ is called real if $L(M) \otimes L(M)$ is still simple \cite{Lec}. We say that a tableau $T$ is prime (resp. real) if the corresponding $U_q(\widehat{\mathfrak{sl}_k})$-module $L(M_T)$ is prime (resp. real). The problem of classification of prime $U_q(\widehat{\mathfrak{sl}_k})$-modules in the category $\mathcal{C}_{\ell}$ ($\ell \ge 0$) is equivalent to the problem of classification of prime tableaux in $\SSYT(k, [n], \sim)$, $n = k+\ell+1$, \cite{CDFL}. 


\subsection{Tropical Grassmannians} \label{subsec:tropical grassmannians}


We recall results of tropical Grassmannians \cite{SS04, SW05}.

The tropical Grassmannian ${\rm Trop}\, G(k,n)$, introduced in \cite{SS04}, parametrizes realizable tropical linear spaces; it is the tropical variety of the Pl\"{u}cker ideal of the Grassmannian $\Gr(k,n)$. For general $(k,n)$ the Pl\"{u}cker ideal contains higher degree generators and to calculate ${\rm Trop}\, G(k,n)$ quickly becomes a completely intractable problem, but for $k=2$, ${\rm Trop}\, G(2,n)$ is completely characterized by the tropicalization of the 3-term tropical Pl\"{u}cker relations.  We present the full definition and then immediately specialize to the so-called positive tropical Grassmannians.

\begin{definition}[\cite{SS04}]
    Given $e = (e_1,\dots, e_N) \in \mathbb{Z}_{\ge 0}^N$, denote $\mathbf{x}^e = x_1^{e_1}\dots x_N^{e_N}$.  Let $E \subset\mathbb{Z}^N_{\ge 0}$. If $f = \sum_{e\in E} f_e \mathbf{x}^e$ is nonzero, denote by ${\text{Trop}}\, (f)$ the set of all points $(X_1,\ldots, X_N)$ such that for the collection of numbers $\sum_{i=1}^N e_i X_i$ for $e$ ranging over $E$, the minimum of the collection is achieved at least twice.  We say that ${\text{Trop}}\, (f)$ is the tropical hypersurface associated to $f$.  The \emph{tropical Grassmannian} ${\text{Trop}}\, G(k,n)$ is the intersection of all tropical hypersurfaces ${\text{Trop}}\, (f)$ where $f$ ranges over all elements in the Pl\"{u}cker ideal.

\end{definition}
  On the other hand, in \cite{SW05}, Speyer-Williams introduced the \textit{positive} tropical Grassmannian ${\rm Trop}^+ G(k,n)$, which was later shown independently in \cite{arkani2020positive, SW20} to be equal to the positive Dressian, which is characterized by the 3-term tropical Pl\"{u}cker relations,
$$\pi_{Lac} + \pi_{Lbd} = {\rm min}\{\pi_{Lab} + \pi_{Lcd},\pi_{Lad} + \pi_{Lbc}\},$$
for each pair $(L,\{a,b,c,d\}) \in \binom{\lbrack n\rbrack}{k-2}\times \binom{\lbrack n\rbrack \setminus L}{4}$ with $a<b<c<d$.

Generalized positive roots were defined in \cite{CE2020} and developed in depth in \cite{E2021} in the context of root polytopes and CEGM scattering amplitudes \cite{CEGM2019}.
We now recall the definition of generalized positive roots\footnote{The name originates from \cite[Theorem 4.3]{E2021}, according to which the their convex hull, the generalized root polytope $\mathcal{R}^{(k)}_{n-k}$, admits a flag-unimodular triangulation which specializes to that the triangulation of the type $A$ root polytope of Gelfand-Graev-Postnikov in the context of hypergeometric systems \cite{GGP}.}.

We use $\binom{\lbrack n\rbrack}{k}^{nf}$ to denote the set of $k$-element subsets of $[n]$ which are nonfrozen, i.e., not of the form $[i,i+k-1]$ up to cyclic shifts.
\begin{definition}[\cite{CE2020}]
    Given any $J = \{j_1<\cdots<j_k\} \in \binom{\lbrack n\rbrack}{k}^{nf}$, the \textit{generalized positive root} $\gamma_J$ is the linear function on the space $\mathbb{T}^{k-1,n-k}$:
\begin{eqnarray}\label{eq: gen Positive root}
    \gamma_J = \sum_{t=j_1}^{j_2-2}\alpha_{1,t} + \sum_{t=j_2-1}^{j_3-3}\alpha_{2,t} + \cdots + \sum_{t=j_{k-1}-(k-2)}^{j_k-k}\alpha_{k-1,t}.
\end{eqnarray}
\end{definition}
When there is no risk of confusion we also call the vector $v_J$ dual to $\gamma$ a generalized positive root:
$$v_J = \sum_{t=j_1}^{j_2-2}e_{1,t} + \sum_{t=j_2-1}^{j_3-3}e_{2,t} + \cdots + \sum_{t=j_{k-1}-(k-2)}^{j_k-k}e_{k-1,t}.$$

Denote
\begin{align*}
X(k,n) = \left\{ g \in \Gr(k,n): \prod_{J} p_J(g) \ne 0 \right\}/(\CC^*)^n.
\end{align*}
We construct an embedding 
$$(\mathbb{CP}^{n-k-1})^{\times (k-1)} \hookrightarrow X(k,n)$$
of a Cartesian product of projective spaces into $X(k,n)$.

Define a $(k-1)\times (n-k-1)$ polynomial-valued matrix $M_{k,n}$ with entries $m_{i,j}(x)$, with $(i,j) \in \lbrack 1,k-1\rbrack \times \lbrack 1,n-k\rbrack$, defined by 
\begin{eqnarray*}
	m_{i,j}(\{x_{a,b}: (a,b) \in \lbrack i,k-1\rbrack \times \lbrack 1,j\rbrack\}) & = & (-1)^{k+i} \sum_{1\le b_i\le b_{i+1}\le \cdots \le b_{k-1}\le j}  x_{i,b_{i}}x_{i+1,b_{i+1}}\cdots x_{k-1,b_{k-1}}.
\end{eqnarray*}

For the embedding $(\mathbb{CP}^{n-k-1})^{\times k-1} \hookrightarrow X(k,n)$, we construct a $k\times (n-k)$ matrix $M$ (called web matrix \cite{SW05}) with $M_{k,n}$ as its upper right block:
\begin{eqnarray}\label{eq: pos parametrization}
    M & = & \begin{bmatrix}
	1 &  &  & 0 & m_{1,1}& \cdots  & m_{1,n-k} \\
	& \ddots &  &  &\vdots & \ddots & \vdots \\
	&  & 1 &  & m_{k-1,1} & & m_{k-1,n-k} \\
	0&  &  & 1 & 1 & \cdots & 1 \\
\end{bmatrix}.
\end{eqnarray}
For instance, for rank $k=3$ we have
\begin{eqnarray}
	M_{3,6} & = & \begin{bmatrix}
		x_{1,1}x_{2,1} & x_{1,1}x_{2,12}+x_{1,2}x_{2,2}  & x_{1,1}x_{2,123} +x_{1,2}x_{2,23} +x_{1,3}x_{2,3} \\
		-x_{2,1} & -x_{2,12} & -x_{2,123}\nonumber
	\end{bmatrix}
\end{eqnarray}
and for the embedding we have
$$M = \begin{bmatrix}
	1 & 0 & 0 & x_{1,1}x_{2,1} & x_{1,1}x_{2,12}+x_{1,2}x_{2,2} & x_{1,1}x_{2,123} + x_{1,2}x_{2,23} + x_{1,3}x_{2,3} \\
	0& 1 & 0 & -x_{2,1} & -x_{2,12} & -x_{2,123}\\
	0 & 0 & 1 & 1 & 1 & 1 
	\nonumber
\end{bmatrix}.$$
Here we abbreviate for example $x_{i,23} = x_{i,2} + x_{i,3}$.

For any $k$-subset $J$ of $\lbrack n\rbrack$, define $e_J = \sum_{j\in J}e_j.$  Further denote by $\{e^J: J \in \binom{\lbrack n\rbrack}{k}\}$ the standard basis of $\mathbb{R}^{\binom{n}{k}}$.

\begin{definition} [{\cite{LZ98}}] \label{def:weakly separted k-subsets}
    A pair of $k$-element subsets $I,J$ is said to be \textit{weakly separated} if the difference of indicator vectors $e_I - e_J$ alternates sign at most twice, that is one does not have the pattern $e_a-e_b+e_c-e_d$ for $a<b<c<d$, up to cyclic rotation.
\end{definition}

\begin{definition}[{\cite{SSW17}}] \label{def:noncrossing pairs}
A pair $I=\{i_1< \ldots < i_k\}$, $J=\{j_1<\ldots<j_k\}$ of $k$-subsets of $[n]$ is said to be noncrossing if for each $1 \le a < b \le k$, either the pair $\{i_a, i_{a+1}, \ldots, i_b\}$, $\{j_a, j_{a+1}, \ldots, j_b\}$ is weakly separated, or $\{i_{a+1}, \ldots, i_{b-1}\} \ne \{j_{a+1}, \ldots, j_{b-1}\}$. 
\end{definition}

\begin{remark}
   Here it is easy to see that Definition \ref{def:weakly separted k-subsets} is a slight reformulation of the original construction given in \cite{LZ98}.  Similarly, Definition \ref{def:noncrossing pairs} is clearly equivalent to the one given in \cite{SSW17}.
\end{remark}

Denote by ${\bf NC}_{k,n}$ the poset of all collections of pairwise noncrossing nonfrozen $k$-element subsets of $[n]$, ordered by inclusion.

Denote $\mathbb{T}^{k-1,n-k} = (\mathbb{T}^{n-k})^{\times (k-1)}$, where $\mathbb{T}^{n-k} = \RR^{n-k}/\RR(1, \ldots, 1)$.

\section{Newton Polytopes and Tropical Fans for Grassmannian Cluster Algebras} \label{sec:recursive definition of Newton polytopes}

In this section, we define Newton polytopes and tropical fans for Grassmannian cluster algebras. 

\subsection{Newton polytopes for Grassmannian cluster algebras} \label{subsec:Newton polytopes for Grassmannian cluster algebras}

In what follows, we give a recursive construction of a collection of Newton polytopes ${\bf N}_{k,n}^{(d)}$ ($d \in \ZZ_{\ge 0}$), starting from the Planar Kinematics (PK) polytope $\Pi_{k,n}$ \cite{CE2020}, see also \cite{E2021}, which is equal to ${\bf N}^{(0)}_{k,n}$ in the present notation.

For any tableau $T \in \SSYT(k, [n])$, we evaluate $\ch_T$ on the web matrix $M$ in (\ref{eq: pos parametrization}) and obtain a polynomial in $x_{i,j}$-coordinates. 
Note that there is a monomial transformation relating the $x_{i,j}$ coordinates to the so-called $X$-coordinates \cite{FG09} in cluster algebras: $X_{ij} = \frac{x_{k-i,j}}{x_{k-i,j+1}}$.

For any tableaux $T$ with columns $T_1,\ldots, T_r$, define 
\begin{align} \label{eq: vT}
\gamma_T = \gamma_{T_1} + \cdots + \gamma_{T_r}, \quad v_T = v_{T_1} + \cdots + v_{T_r},
\end{align}
where $\gamma_{T_i}=\gamma_{J_i}$ and $v_{T_i} = v_{J_i}$ are defined in Section \ref{subsec:tropical grassmannians} and $J_i$ is the sorted content of the one-column tableau $T_i$. 

\begin{definition} \label{def:newton polytopes for grassmannian cluster algebras version 1}
Let $\mathcal{T}^{(0)}_{k,n}$ be the set of all one-column tableaux which are obtained by cyclic shifts of the one-column tableau with entries $1,2,\ldots,k-1,k+1$. For $d \ge 0$, we define recursively
\begin{align*}
{\bf N}_{k,n}^{(d)} = {\rm Newt}\left(\prod_{T \in \mathcal{T}_{k,n}^{(d)}} \ch_T(x_{i,j}) \right),
\end{align*}
where $\mathcal{T}_{k,n}^{(d+1)}$ is the set of all tableaux which correspond to facets of ${\bf N}_{k,n}^{(d)}$. 
\end{definition}
More precisely, 
$$\mathcal{T}^{(d+1)}_{k,n} = \left\{T: \gamma_T \text{ is minimized on a facet of } {\bf N}^{(d)}_{k,n}\right\},$$
see Section \ref{sec:rays to prime modules} for details on computing $\mathcal{T}^{(d)}_{k,n}$.

In particular, the so-called \textit{Planar Kinematics} (PK) polytope \cite{CE2020}, denoted there   
$$ \Pi_{k,n} = \text{Newt}\left(\prod_{j=1}^n \frac{p_{j,j+1,\ldots, j+k-2,k}}{p_{j,j+1,\ldots, j+k-2,k-1}} \right),$$
is the same as ${\bf N}^{(0)}_{k,n}$ noting that when we evaluated on the matrix $M$ all denominators are monomials in the $x_{i,j}$ coordinates. For example, evaluating on the matrix $M$ we have
$$\mathbf{N}^{(0)}_{2,6} = {\rm Newt}\left( \frac{x_{1,12} x_{1,23}x_{1,34}x_{1,1234}}{x_{1,1}x_{1,2}x_{1,3}x_{1,4}} \right)$$
and
\begin{eqnarray*}
   \mathbf{N}^{(0)}_{3,6} & =& {\rm Newt}\left( \frac{\left(x_{1,1} x_{2,1}+x_{1,1} x_{2,2}+x_{1,2} x_{2,2}\right)  \left(x_{1,2} x_{2,2}+x_{1,2} x_{2,3}+x_{1,3} x_{2,3}\right)\left(x_{1,123}\right)\left(x_{2,123}\right) }{x_{1,1} x_{1,2} x_{1,3} x_{2,1} x_{2,2} x_{2,3}}\right),
\end{eqnarray*}
where we abbreviate for example $x_{1,123} = x_{1,1} + x_{1,2} + x_{1,3}$.

On the other hand, the polytope ${\bf N}^{(1)}_{k,n}$, which is closely related\footnote{In particular, the normal fan of $\mathbf{N}^{(1)}_{k,n}$ has the following property: its cones are in bijection with the cones in the positive tropical Grassmannian $\text{Trop}^+G(k,n)$.} to the positive tropical Grassmannian, is given by 
$${\bf N}^{(1)}_{k,n} = \text{Newt}\left(\prod_{J \in \binom{\lbrack n\rbrack}{k}} p_J\right).$$

\begin{remark}
We are concerned with the facets of polytopes ${\bf N}^{(0)}_{k,n},{\bf N}^{(1)}_{k,n},\ldots,{\bf N}^{(d)}_{k,n}$.  Motivated in part by work of Arkani-Hamed, Frost, Plamondon, Salvatori, and Thomas, \cite{AFPST21_surface} on polyhedra modeled on punctured surfaces which they call surfacehedra, having infinitely many Minkowski summands, ultimately ($d 
\to \infty$) we are interested in a new object (denoted by ${\bf N}^{(\infty)}_{k,n}$) which again has infinitely many Minkowski summands (and infinitely many facets).  In our proposal, these Minkowski summands are by construction in bijection with prime tableaux in $\SSYT(k, [n])$ (equivalently prime modules of the quantum affine algebra in the category $\mathcal{C}_{\ell}$).
\end{remark}

We define another version of Newton polytopes non-recursively. For $k \le n$ and $r \in \ZZ_{\ge 1}$, denote by $\SSYT_{k,n}^r$ the set of all tableaux in $\SSYT(k,[n])$ with $r$ or less columns. 
\begin{definition} \label{def:another version of Newton polytopes for Grkn}
For $k \le n$ and $d \in \ZZ_{\ge 1}$, we define 
\begin{align*}
{\bf N'}_{k,n}^{(d)} = {\rm Newt}  \left( \prod_{T \in \SSYT_{k,n}^d} \ch_T(x_{i,j})\right).
\end{align*}
\end{definition}

\subsection{Tropical Fans for Grassmannian Cluster Algebras} \label{subsec:tropical fans for Grassmannian cluster algebras}

Recall that given a polytope $P$ in a real vector space $V$, its normal fan $\mathcal{N}(P)$ is the polyhedral complex on the dual space $V^*$, (closed) faces consist of all linear functionals minimized on a given face of $P$. In the following, we describe the normal fan of the Newton polytope ${\bf N}_{k,n}^{(d)}$ defined in Section \ref{subsec:Newton polytopes for Grassmannian cluster algebras}.

The evaluation of $\ch_T = \ch(T)$ on the web matrix $M$ \cite{SW05} (see Section \ref{subsec:tropical grassmannians}), we obtain a subtraction free polynomial in the $x_{i,j}$ coordinates. For example, for $\Gr(2,5)$, we have that 
\begin{align*}
& p_{1,2}(M) = p_{1,3}=p_{1,4}=p_{1,5} =1, \ p_{2,3}(M) = x_{1,1}, \ p_{3,4}(M) = x_{1,2}, \ p_{4,5}(M)=x_{1,3}, \\
& p_{2,4}(M) = x_{1,1} + x_{1,2}, \ p_{2,5}(M) = x_{1,1} + x_{1,2} + x_{1,3}, \ p_{3,5}(M) = x_{1,2} + x_{1,3}.
\end{align*}

Recall that we denote by $\mathcal{T}_{k,n}^{(0)}$ the set of all tableaux obtained by cyclic shifts of the one-column tableau with entries $1,2,\ldots, k-1,k+1$. By tropicalizing all $\ch_T(M)$, $T \in \mathcal{T}_{k,n}^{(0)}$, we obtain piecewise linear functions in the space of dimension $(k-1)(n-k-1)$ parametrized by $y_{i,j}$ ($y_{i,j}$ is the tropical version of $x_{i,j}$).  Such a function is linear on a collection of cones; these cones assemble to define a polyhedral fan.  The common refinement of these fans is the normal fan $\mathcal{N}(\mathbf{N}^{(0)}_{k,n})$ of the Newton polytope $\mathbf{N}^{(0)}_{k,n}$. By \cite[Corollary 10.5]{E2021}, the set of rays of $\mathcal{N}(\mathbf{N}^{(0)}_{k,n})$ is given by 
$$\left\{{\rm Ray}(v_J): J \in \binom{\lbrack n\rbrack}{k}^{nf}\right\},$$
where ${\rm Ray}(v) = \{c v:c\ge 0\}$ is the ray in the direction of a vector $v$.

For $d\ge 1$, the normal fan $\mathcal{N}(\mathbf{N}^{(d)}_{k,n})$ 
can be constructed from $\mathcal{N}(\mathbf{N}^{(d-1)}_{k,n})$ as follows.  Let $\mathcal{T}_{k,n}^{(d)}$ be the set of all tableaux corresponding to rays of $\mathcal{N}(\mathbf{N}^{(d-1)}_{k,n})$, that is 
$$\mathcal{T}_{k,n}^{(d)} = \left\{T: {\rm Ray}(v_T) \text{ is a ray of }\mathcal{N}(\mathbf{N}^{(d-1)}_{k,n}) \right\}.$$
Here $v_T$ is defined in Equation \eqref{eq: vT} and the construction of tableaux from rays is given in Section \ref{sec:rays to prime modules}. Indeed, in Section \ref{sec:rays to prime modules} we construct tableaux from facets of Newton polytopes. The construction of tableaux from rays of normal fans is the same. 

\begin{remark}
Tropical fans for Grassmannian cluster algebras have been defined in \cite{BBRS20, DFGK}, by tropical evaluations of finite sets of cluster variables. The tropical fans $\mathcal{N}(\mathbf{N}^{(d)}_{k,n})$ defined here use not only cluster variables but also other prime elements in the dual canonical basis of $\CC[\Gr(k,n)]$. 
\end{remark}

\subsection{Relation with positive tropical Grassmannians} \label{subsec:map Fkn and relation between normal fans and positive tropical Grassmannians}
Recall that we use $\binom{\lbrack n\rbrack}{k}^{nf}$ to denote the set of $k$-element subsets of $[n]$ which are nonfrozen, i.e., not of the form $[i,i+k-1]$ up to cyclic shifts, and recall that $\{e^J: J \in \binom{\lbrack n\rbrack}{k}\}$ is the standard basis of $\mathbb{R}^{\binom{n}{k}}$. For each $J \in \binom{\lbrack n\rbrack}{k}^{nf}$, recall that $\mathfrak{h}_J \in \mathbb{R}^{\binom{\lbrack n\rbrack}{k}}$ \cite{E2021, E2022} is defined by 
\begin{align} \label{eq:height function hJ}
\mathfrak{h}_J = -\frac{1}{n}\sum_{I \in \binom{\lbrack n\rbrack}{k}}\min\left\{L_1(e_J-e_I),L_2(e_J-e_I),\ldots, L_n(e_J-e_I)   \right\}e^I,
\end{align}
where 
$$L_j(x) = x_{j+1}+2x_{j+2} + \cdots +(n-1)x_{j-1}.$$
 
The \textit{lineality subspace} $\text{Lin}_{k,n}$ of $\mathbb{R}^{\binom{n}{k}}$ is defined by 
\begin{align*}
\text{Lin}_{k,n} = \text{span}\left\{\sum_{J \in {[n] \choose k}, J \ni j} e^{J},\ j=1,\ldots, n\right\},
\end{align*} 
see \cite[Definition 2.1]{E2022}. Clearly, $\dim(\text{Lin}_{k,n}) = n$. 

For each $J = \{j_1,\ldots, j_k\}\in \binom{\lbrack n\rbrack}{k}^{nf}$, define a cubical collection of $k$-element subsets of $\{1,\ldots, n\}$ by 
$$\mathcal{U}(J) = \left\{\{j_1+t_1,\ldots, j_k+t_k\}: t_i \in \{0,1\}, \text{ and } t_i = 0 \text{ whenever } j_i+1 \in J \right\},$$
where addition is modulo $n$, see \cite{E2021}.

Denote by $\omega_J(y)$ \cite{E2021} the tropical planar cross-ratio
$$\omega_J = \sum_{J' \in \mathcal{U}(J)} (-1)^{k-\#(J' \cap J)+1 }P_{J'}(y),$$
where $P_{J'}(y) = \text{Trop}(p_{J'})(y)$
is the tropicalization of the Pl\"{u}cker coordinate $p_{J'}(x)$, evaluated on the web matrix $M=(x_{i,j})_{k \times n}$ in Section \ref{subsec:tropical grassmannians}.	
 
Denote by $\mathcal{F}^{(k)}_n: \mathbb{R}^{(k-1)\times (n-k)} \rightarrow \mathbb{R}^{\binom{n}{k}}\slash\text{Lin}_{k,n}$ the map
	\begin{eqnarray}\label{eq: potential tropical cross ratio}
		\mathcal{F}^{(k)}_n(y) & = & \sum_{J \in \binom{\lbrack n\rbrack}{k}^{nf}}\omega_J(y)\mathfrak{h}_J.
\end{eqnarray}

The normal fan $\mathcal{N}(\mathbf{N}^{(1)}_{k,n})$ defined in Section \ref{subsec:tropical fans for Grassmannian cluster algebras} has been shown \cite[Proposition 11.5]{arkani2020positive} to satisfy the following property: its cones are in bijection with the cones in the positive tropical Grassmannian $\text{Trop}^+G(k,n)$, as defined by Speyer and Williams \cite{SW05}.  In particular, this bijection is achieved via the piecewise-linear map $\mathcal{F}^{(k)}_n$, which is equal (modulo a change in parameterization) to the map $\text{Trop}(\Phi_2)$ defined in \cite[Section 4]{SW05}, see also \cite{SW20}.

\section{Semistandard Young Tableaux and Generalized Root Polytopes} \label{sec:tableaux and generalized root polytopes}

In this section, we study relation between semistandard tableaux and generalized root polytopes. 

\subsection{Isomorphism of Monoids} \label{subsec:isomorphism of monoids}

Recall that the set $\SSYT(k,[n], \sim)$ of all $\sim$-equivalence classes of semistandard Young tableaux of rectangular shape with $k$ rows and with entries in $[n]$ form a monoid under the multiplication ``$\cup$'' \cite{CDFL}, see Section \ref{subsec:Grassmannian cluster algebras and semistandard Young tableaux}. This monoid is isomorphic to the monoid $\mathcal{P}_{\ell}^+$ of dominant monomials in $\mathcal{C}_{\ell}^{\mathfrak{sl}_k}$, $n = k+\ell+1$. 

The vector space $\ZZ_{\ge 0}^{(k-1)\times (n-k)}$ form a monoid $(\ZZ_{\ge 0}^{(k-1)\times (n-k)}, +)$ generated by $e_{i,j}$, $i \in [k-1]$, $j \in [n-k]$, where $e_{i,j}$'s form a standard basis of $\ZZ_{\ge 0}^{(k-1)\times (n-k)}$ ($e_{i,j}$'s also form a standard basis of $\RR^{(k-1)\times (n-k)}$). 

\begin{lemma} \label{lem:isomorphism of monoid of tableaux and monoid of higher roots}
We have an isomorphism of monoids 
\begin{align*}
(\SSYT(k, [n], \sim), \cup) \to (\ZZ_{\ge 0}^{(k-1)\times (n-k)}, +). 
\end{align*}
\end{lemma}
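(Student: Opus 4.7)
The plan is to reduce the statement to the already-established monoid isomorphism $(\SSYT(k,[n],\sim), \cup) \cong (\mathcal{P}_\ell^+, \cdot)$ of Theorem 3.17 in \cite{CDFL} (with $n = k+\ell+1$), and then to recognize $\mathcal{P}_\ell^+$ as the free commutative monoid on $(k-1)(n-k)$ generators, which is precisely what $\ZZ_{\ge 0}^{(k-1)\times (n-k)}$ is.

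First I would recall from Section \ref{subsec:HL category and cluster algebra} that $\mathcal{P}_\ell^+$ is by construction the free commutative monoid on the variables $\{Y_{i, \xi(i)-2r} : i \in I,\ r \in [0, d\ell]\}$. For $\mathfrak{g} = \mathfrak{sl}_k$ one has $|I| = k-1$ and $d = 1$, and since $\ell = n-k-1$, this yields exactly $(k-1)(\ell+1) = (k-1)(n-k)$ generators. Second, I would construct the identification $\mathcal{P}_\ell^+ \xrightarrow{\sim} \ZZ_{\ge 0}^{(k-1)\times(n-k)}$ by relabeling the generators as $Y_{i, \xi(i)-2(j-1)} \longleftrightarrow e_{i,j}$ for $(i,j) \in [k-1]\times[n-k]$, where $\{e_{i,j}\}$ is the standard basis. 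The assignment $\prod Y_{i, \xi(i)-2(j-1)}^{a_{i,j}} \mapsto \sum a_{i,j} e_{i,j}$ is tautologically a monoid isomorphism, since multiplication of monomials corresponds to componentwise addition of exponents. Composing with the isomorphism of \cite[Theorem 3.17]{CDFL} produces the desired map; concretely it sends a tableau $T$ to the vector whose $(i,j)$-entry is the multiplicity of the fundamental tableau $T_{i, \xi(i)-2(j-1)}$ in the unique fundamental-tableau representative of the $\sim$-class of $T$ supplied by Lemma 3.13 of \cite{CDFL}.

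The only real obstacle is bookkeeping: one must verify that the index set $\{(i,r) : i \in [k-1],\ r \in [0,\ell]\}$ parameterizing generators of $\mathcal{P}_\ell^+$ lines up bijectively with $[k-1]\times[n-k]$ under the chosen height function $\xi(i) = i-2$. For $\mathfrak{sl}_k$, where $d_i = 1$ for all $i$, this matches directly via $j = r+1$, and no subtler counting is required; for other Lie types (treated later in the paper) the analogous count is more delicate because the spacing $2d_i$ depends on $i$, but for this lemma that complication does not appear.
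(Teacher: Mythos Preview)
Your proof is correct and uses essentially the same key input as the paper: both arguments rest on Lemma~3.13 of \cite{CDFL}, which says every class in $\SSYT(k,[n],\sim)$ has a unique representative whose columns are fundamental tableaux. The only difference is packaging. The paper defines the map directly on fundamental tableaux, $T_{i,j}\mapsto e_{i,j}$ (where $T_{i,j}$ has content $[j,j+k]\setminus\{i+j\}$), and invokes Lemma~3.13 to see this extends to an isomorphism. You instead route through the intermediate monoid $\mathcal{P}_\ell^+$ via Theorem~3.17 of \cite{CDFL} and then relabel $Y_{i,\xi(i)-2(j-1)}\leftrightarrow e_{i,j}$; since the map $Y_{i,s}\mapsto T_{i,s}$ in \eqref{eq:monomToTableaux} is exactly what underlies Theorem~3.17, your composite is the same map the paper writes down. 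The paper's version is slightly more self-contained (it does not need to unwind the height-function indexing), while yours makes explicit that the lemma is a trivial repackaging of a result already quoted from \cite{CDFL}.
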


\begin{proof}
For $i \in [k-1]$, $j \in [n-k]$, denote by $T_{i,j}$ the fundamental tableau with entries $[j,j+k]\setminus \{i+j\}$. 

By Lemma 3.13 in \cite{CDFL}, every tableau in $\SSYT(k, [n], \sim)$ is $\sim$-equivalent to the union of a set of fundamental tableaux. The isomorphism $(\SSYT(k, [n], \sim), \cup) \to (\ZZ_{\ge 0}^{(k-1)\times (n-k)}, +)$ is induced by $T_{i,j} \mapsto e_{i,j}$. The inverse isomorphism is given as follows. Every element $v$ in $(\ZZ_{\ge 0}^{(k-1)\times (n-k)}, +)$ can be written as $v = \sum_{i,j} c_{i,j} e_{i,j}$ for some positive integers $c_{i,j}$. Let $T_v = \cup_{i,j} T_{i,j}^{\cup c_{i,j}}$. The inverse isomorphism is given by $v \mapsto T_v$.
\end{proof}

We denote by $T_v$ the tableau in $\SSYT(k, [n], \sim)$ corresponding to $v \in \ZZ_{\ge 0}^{(k-1)\times (n-k)}$ and denote by $v_T$ the element in $\ZZ_{\ge 0}^{(k-1)\times (n-k)}$ corresponding to $T \in \SSYT(k, [n], \sim)$. 

\subsection{Generalized Root Polytopes}

For any collection $\mathcal{J}  = \{J_1,\ldots, J_{m}\}\in \mathbf{NC}_{k,n}$ of nonfrozen subsets $J_i$, define 
\begin{eqnarray*}
	\lbrack \mathcal{J}\rbrack & = & \text{Convex hull}(\left\{0,v_{J_1},\ldots, v_{J_m}\right\}).
\end{eqnarray*}
The generalized root polytope $\mathcal{R}^{(k)}_{n-k}$ is the convex hull of all generalized positive roots $v_J$,
$$\mathcal{R}^{(k)}_{n-k} = \left\{v_J \in \mathbb{T}^{k-1,n-k}: J \in \binom{\lbrack n\rbrack}{k}^{nf} \right\},$$
where we remind that $\mathbb{T}^{k-1,n-k} = (\mathbb{T}^{n-k})^{\times (k-1)}$ and $\mathbb{T}^{n-k} = \RR^{n-k}/\RR(1, \ldots, 1)$.

\begin{theorem} [{\cite[Theorem 1.2]{E2021}}] \label{thm: triangulation}
	The set of simplices $\left\{\lbrack \mathcal{J}\rbrack: \mathcal{J} \in \mathbf{NC}_{k,n} \right\}$ defines a flag, unimodular triangulation of $\mathcal{R}^{(k)}_{n-k}$: simplices in the triangulation have equal volume and are in bijection with pairwise noncrossing collections of nonfrozen $k$-element subsets.  In particular, the set of cones 
	$$\mathcal{C}_\mathcal{J} = \left\{\sum_{J \in \mathcal{J}}c_J v_J: c_J>0\right\}$$
	assemble to define a complete simplicial in $\mathbb{T}^{k-1,n-k}$, and any point in $\mathbb{T}^{k-1,n-k}$ lies in the relative interior of a unique cone in the fan.
\end{theorem}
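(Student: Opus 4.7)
The plan is to realize $\{[\mathcal{J}]: \mathcal{J} \in \mathbf{NC}_{k,n}\}$ as a regular, unimodular, flag triangulation of $\mathcal{R}^{(k)}_{n-k}$, and then derive the cone/fan statement by radial projection from the common vertex $0 \in [\mathcal{J}]$. First I would establish affine independence: for any $\mathcal{J} \in \mathbf{NC}_{k,n}$ the $0/1$ vectors $v_J$, whose supports are the row-intervals described by \eqref{eq: gen Positive root}, are linearly independent. Using Definition \ref{def:noncrossing pairs} one can order $\mathcal{J} = \{J_1, \ldots, J_m\}$ and produce for each $s$ a coordinate $(i,t)$ that lies in the support of $v_{J_s}$ but not in that of any $v_{J_{s'}}$ with $s' > s$; this triangular pivot structure simultaneously proves linear independence and unimodularity of the simplex $[\mathcal{J}]$, since after the implied change of basis $\{v_{J_s}\}$ is upper triangular in a subset of the standard basis. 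A parallel combinatorial argument then shows that maximal noncrossing collections have cardinality $(k-1)(n-k-1) = \dim \mathbb{T}^{k-1,n-k}$, so the top-dimensional $[\mathcal{J}]$ have the expected dimension.

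The main obstacle is the covering/matching property: that the simplices $[\mathcal{J}]$ for maximal $\mathcal{J} \in \mathbf{NC}_{k,n}$ cover $\mathcal{R}^{(k)}_{n-k}$ with pairwise disjoint relative interiors. My preferred route is to realize the subdivision as a regular triangulation by exhibiting a strictly convex height function $h: \binom{[n]}{k}^{nf}\to \RR$. A natural candidate is provided by the heights $\mathfrak{h}_J$ of \eqref{eq:height function hJ}: these are engineered so that the tropical cross-ratios $\omega_J$ assemble consistently on noncrossing collections, and one can check by a direct computation on two adjacent maximal simplices differing by a flip that the lower envelope of $\{(v_J,\mathfrak{h}_J)\}$ projects exactly to the claimed simplices. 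Equivalently, one may invoke the identification of the normal fan of the PK-type Newton polytope with the positive tropical Grassmannian $\text{Trop}^+ G(k,n)$ (\cite[Proposition 11.5]{arkani2020positive}, cf.\ Section \ref{subsec:map Fkn and relation between normal fans and positive tropical Grassmannians}), whose maximal cones are indexed by maximal noncrossing collections, and dualize. The subtlety here is ruling out any spurious coincidences on the lower envelope; this is where the inductive structure on $(k,n)$ and a careful analysis of "flips" across shared codimension-one faces become essential.

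Once these ingredients are in place, the flag property is tautological: noncrossingness is a pairwise condition by Definition \ref{def:noncrossing pairs}, so $\mathcal{J}$ is a face of the triangulation iff every pair of elements of $\mathcal{J}$ forms an edge, iff $\mathcal{J} \in \mathbf{NC}_{k,n}$. Equal volume follows from unimodularity. For the fan statement, since $0$ is a common vertex of every $[\mathcal{J}]$, radial projection from $0$ converts the triangulation of $\mathcal{R}^{(k)}_{n-k}$ into a complete simplicial fan on $\mathbb{T}^{k-1,n-k}$ whose maximal cones are precisely the $\mathcal{C}_\mathcal{J}$ for maximal $\mathcal{J}$: each nonzero $p \in \mathbb{T}^{k-1,n-k}$ lies on a unique ray through the origin, and this ray meets $\partial \mathcal{R}^{(k)}_{n-k}$ in a single point belonging to the relative interior of a unique simplex of the triangulation, which in turn determines the unique cone $\mathcal{C}_\mathcal{J}$ containing $p$ in its relative interior.
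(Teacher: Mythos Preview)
The paper does not prove this theorem; it is quoted from \cite[Theorem~1.2]{E2021} and used as a black box. So there is no in-paper argument to compare against, and I assess your sketch on its own terms.

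The architecture is reasonable: unimodularity via a triangular pivot on the interval supports of the $v_J$, the flag property from the pairwise Definition~\ref{def:noncrossing pairs}, and the fan by coning from $0$. But the step you yourself flag as ``the main obstacle'' --- that the $[\mathcal{J}]$ actually cover $\mathcal{R}^{(k)}_{n-k}$ with disjoint interiors --- is not established by either of your two suggested routes. First, the $\mathfrak{h}_J$ of \eqref{eq:height function hJ} are \emph{vectors} in $\mathbb{R}^{\binom{n}{k}}$, not scalars, so they cannot serve directly as lifting heights for a regular subdivision of the point configuration $\{v_J\}\subset\mathbb{T}^{k-1,n-k}$; you would need to specify a scalar functional and then actually carry out the flip verification you defer. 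Second, the alternative via \cite[Proposition~11.5]{arkani2020positive} only puts the cones of the normal fan of $\mathbf{N}^{(1)}_{k,n}$ in bijection with those of $\mathrm{Trop}^+G(k,n)$; it does not by itself identify either family of cones with the $\mathcal{C}_\mathcal{J}$ indexed by $\mathbf{NC}_{k,n}$. That identification is essentially the content of the theorem, so invoking it here is circular (and in \cite{E2021} the logic in fact runs the other way).

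A smaller gap: your radial-projection step tacitly assumes that $0$ lies in the \emph{interior} of $\mathcal{R}^{(k)}_{n-k}$ in the quotient $\mathbb{T}^{k-1,n-k}$, not merely that $0$ is a common vertex of the $[\mathcal{J}]$. If $0$ were a boundary point, the cones $\mathcal{C}_\mathcal{J}$ would fill only a proper cone rather than all of $\mathbb{T}^{k-1,n-k}$. Interiority is true but requires its own check (equivalently, one must verify that the nonfrozen $v_J$ positively span the quotient).
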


Now under the isomorphism in Lemma \ref{lem:isomorphism of monoid of tableaux and monoid of higher roots}, one-column tableaux correspond to generalized positive roots; thus, Theorem \ref{thm: triangulation} says that any linear combination of generalized positive roots with real coefficients decomposes uniquely as a linear combination, with positive coefficients, indexed by a pairwise noncrossing collection.  This means that if we restrict to integer coefficients then the triangulation has a beautiful representation-theoretic interpretation in terms of tableaux!  

\begin{example}
    Let 
    $$v = -v_{1,5,9}+2 v_{2,6,10}+3 v_{3,7,11}+4 v_{4,8,12}.$$
    Then $v$ has the following noncrossing decomposition using Theorem \ref{thm: triangulation}:
    \begin{eqnarray*}
        v & = & v_{1,2,6}+v_{2,8,10}+v_{2,9,10}+3 v_{3,8,10}+2 v_{4,6,10}+3 v_{4,7,10}+v_{7,8,10}
    \end{eqnarray*}
\end{example}

\section{From Facets to Prime Modules} \label{sec:rays to prime modules}
In this section, we describe a procedure to produce a simple $U_q(\widehat{\mathfrak{sl}_k})$-module from every facet of the Newton polytope defined in Section \ref{sec:recursive definition of Newton polytopes} and we conjecture that the obtained simple $U_q(\widehat{\mathfrak{sl}_k})$-module is prime. 

\subsection{A Procedure to Produce a Simple $U_q(\widehat{\mathfrak{sl}_k})$-module from a Given Facet}

Adapting the results of \cite{CDFL} (see Section \ref{subsec:dominant monomials and tableaux}), it suffices to give a procedure to produce a semistandard Young tableau from a given facet. 

The Newton polytope ${\bf N}^{(d)}_{k,n}$ defined in Section \ref{sec:recursive definition of Newton polytopes} is described using certain equations and inequalities in its H-representation (represent the polytope by an intersection half-spaces and hyperplanes). Let $F$ be a facet of the Newton polytope ${\bf N}^{(d)}_{k,n}$. The normal vector $v_F$ of $F$ is the coefficient vector in one of the inequalities in the H-representation of ${\bf N}^{(d)}_{k,n}$. If there is an entry of the vector $v_F$ which is negative, then we add some vectors which are coefficients of the equations in the H-representation of ${\bf N}^{(d)}_{k,n}$ to $v_F$ such that the resulting vector $v_F'$ all have non-negative entries.  The vector $v_F'$ can be written as $v_F' = \sum_{i,j} c_{i,j}e_{i,j}$ for some positive integers $c_{i,j}$, where $e_{i,j}$ is the standard basis of $\RR^{(k-1) \times (n-k)}$. By Lemma \ref{lem:isomorphism of monoid of tableaux and monoid of higher roots}, each $e_{i,j}$ corresponds to a fundamental tableau $T_{i,j}$ which is defined to be the one-column tableau with entries $[j,j+k]\setminus \{i+j\}$. The tableau $T_F$ corresponding to $F$ is obtained from $\cup_{i,j} T_{i,j}^{\cup c_{i,j}}$ by removing all frozen factors (if any). 

\begin{conjecture} \label{conj:prime modules are rays}
\begin{enumerate}
\item For $d \in \ZZ_{\ge 0}$, $k \le n$, and every facet $F$ of the Newton polytope ${\bf N}^{(d)}_{k,n}$, we have that the corresponding tableau $T_F$ is prime.

\item For $k \le n$ and every nonfrozen prime tableau $T$ in $\SSYT(k, [n])$, there is $d \ge 1$ and a facet $F$ of ${\bf N}^{(d)}_{k,n}$ such that $T = T_F$. 
\end{enumerate}
\end{conjecture}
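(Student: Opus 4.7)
The plan is to attack the conjecture by translating primality of $L(M_T)$ into an extremality statement about the vector $v_T$ in the support of the complete normal fan $\mathcal{N}(\mathbf{N}_{k,n}^{(d)})$, via the monoid isomorphism of Lemma \ref{lem:isomorphism of monoid of tableaux and monoid of higher roots} and the noncrossing decomposition of Theorem \ref{thm: triangulation}. The underlying principle is that a facet of $\mathbf{N}_{k,n}^{(d)}$ corresponds to a ray of $\mathcal{N}(\mathbf{N}_{k,n}^{(d)})$, which is an indecomposable lattice direction, and one wishes to match this with tensor-indecomposability on the representation-theoretic side.

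\textbf{Part (1): facets give prime tableaux.} Suppose $F$ is a facet of $\mathbf{N}_{k,n}^{(d)}$ with associated tableau $T_F$. I would argue by contradiction: if $T_F$ is not prime, then $L(M_{T_F}) \cong L(M_{T_1}) \otimes L(M_{T_2})$ for non-trivial $T_1, T_2$, and since the highest $l$-weight of a tensor product is the product of highest $l$-weights, one gets $T_F \sim T_1 \cup T_2$. Under the Grothendieck ring isomorphism between $\mathcal{C}_\ell^{\mathfrak{sl}_k}$ and the dual canonical basis of $\CC[\Gr(k,n,\sim)]$ (Section \ref{subsec:Grassmannian cluster algebras and semistandard Young tableaux}), this yields $\ch_{T_F} = \ch_{T_1}\ch_{T_2}$ as Laurent polynomials on the web matrix, and additivity of $v_T$ under $\cup$ gives $v_{T_F} = v_{T_1} + v_{T_2}$. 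The goal is then to show that this nontrivial additive decomposition forces $v_{T_F}$ off any ray of $\mathcal{N}(\mathbf{N}_{k,n}^{(d)})$. One route is via the unique noncrossing decomposition of Theorem \ref{thm: triangulation}: the image of $v_{T_F}$ in the standard triangulation of $\mathcal{R}^{(k)}_{n-k}$ lies in the relative interior of a unique simplicial cone, and ruling out its spanning a ray amounts to a geometric compatibility statement between the noncrossing decomposition and the normal fan. The base case $d = 0$, where $\mathbf{N}_{k,n}^{(0)} = \Pi_{k,n}$, is known: by \cite[Corollary~10.5]{E2021} the facets of $\Pi_{k,n}$ are in bijection with generalized positive roots $v_J$, and the tableaux $T_J$ can be checked prime by identifying them with fundamental or Kirillov-Reshetikhin modules, or with prime snake modules in the sense of \cite{DLL19}.

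\textbf{Part (2): every prime tableau arises.} This direction is substantially harder, as it asserts surjectivity of a recursively defined construction onto the usually infinite set of nonfrozen prime tableaux. I would handle it in two stages. For real prime tableaux, which by \cite{KKKO18, Qin17} are precisely cluster variables of $\CC[\Gr(k,n)]$, one should attempt to match the $g$-vector of each cluster variable with the normal direction to some facet of $\mathbf{N}_{k,n}^{(d)}$, by showing that a finite sequence of mutations eventually produces the corresponding ray of $\mathcal{N}(\mathbf{N}_{k,n}^{(d)})$ for $d$ large enough. For non-real primes, which admit no finite cluster-algebraic description, the most natural approach is to invoke the limit $g$-vector construction introduced later in the paper (see Section \ref{sec:discussion}): every non-real prime is conjecturally realized as a limit of cluster $g$-vectors along an infinite mutation sequence, and one would need to show that such limits appear among the rays of $\mathcal{N}(\mathbf{N}_{k,n}^{(d)})$ as $d \to \infty$, via a stabilization or monotonicity result for the nested sequence of normal fans.

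\textbf{Main obstacle.} The principal obstacle is the absence of a direct combinatorial-geometric certificate of primality that is visibly compatible with the Minkowski/fan structure: the implication ``$v_{T_F}$ decomposes additively as $v_{T_1} + v_{T_2}$ coming from a tensor product factorization $\Rightarrow$ $v_{T_F}$ is not a ray'' requires a nontrivial geometric argument linking factorizations of dual canonical basis elements to the coherent subdivision induced by the Minkowski sum, and such a tool does not yet appear in the literature. The converse statement in Part (2) concerning non-real primes is in still worse shape, depending on the as-yet conjectural structure of limit $g$-vectors. A tractable intermediate goal is to first establish both directions for Kirillov-Reshetikhin and snake modules, where primality criteria are explicit, before confronting the non-real case. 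The authors' computer-aided verification of Conjecture \ref{conj:prime modules are rays} in the cases $(k,n) = (3,9)$ and $(4,8)$ at $d = 1$ provides strong evidence but does not itself suggest a blueprint for a uniform proof.
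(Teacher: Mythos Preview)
The statement you are addressing is a \emph{conjecture} in the paper, not a theorem; the paper does not supply a proof. What the paper does (Section~\ref{sec:Coarsest subdivisions and prime modules}) is verify Part~(1) for the specific polytopes $\mathbf{N}_{3,9}^{(1)}$ and $\mathbf{N}_{4,8}^{(1)}$ by direct enumeration: it lists all facets, computes the associated tableaux, and checks case by case that each is prime, either by recognizing it as a cluster variable from the tables in \cite{CDHHHL} or by identifying it with a known prime non-real tableau. There is no general argument, and Part~(2) is left entirely open. Your own closing paragraph already notes this correctly.

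Your outline is a reasonable research programme, but it is not a proof, and the central step in Part~(1) has a genuine gap even at the strategic level. From a tensor factorization $L(M_{T_F}) \cong L(M_{T_1}) \otimes L(M_{T_2})$ you correctly deduce $v_{T_F} = v_{T_1} + v_{T_2}$ with both summands nonzero. But this does \emph{not} by itself prevent $v_{T_F}$ from spanning a ray of $\mathcal{N}(\mathbf{N}_{k,n}^{(d)})$: any primitive vector can be written as a sum of two nonzero lattice vectors. What you would need is that $v_{T_1}$ and $v_{T_2}$ each lie in the closed cone of the normal fan dual to $F$ and are not both on its ray, and nothing in the monoid isomorphism or in Theorem~\ref{thm: triangulation} furnishes that. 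The noncrossing triangulation of $\mathcal{R}^{(k)}_{n-k}$ is a fixed combinatorial object independent of $d$, whereas $\mathcal{N}(\mathbf{N}_{k,n}^{(d)})$ refines as $d$ grows, so invoking the unique noncrossing decomposition of $v_{T_F}$ does not obviously interact with the facet structure of $\mathbf{N}_{k,n}^{(d)}$. You identify this yourself as the ``main obstacle,'' which is accurate; the point is that without resolving it the argument does not go through even in outline. For Part~(2), relying on the limit $g$-vector mechanism of Section~\ref{sec:limit g vector} is circular in the present state of knowledge, since Conjecture~\ref{conj:limit g vectors are prime non-real} is itself open and, as noted in \cite[Section~5.3]{HP21} and recalled at the end of Section~\ref{sec:limit g vector}, there exist prime non-real tableaux not reached by limit $g$-vectors.
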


Conjecture \ref{conj:prime modules are rays} gives systematic procedure to construct all prime $U_q(\widehat{\mathfrak{sl}_k})$-modules.


\subsection{Example: $\Gr(3,6)$} \label{subsec:normal vectors for Gr36}

In the case of $\CC[\Gr(3,6)]$, we use the web matrix (see Section \ref{subsec:tropical grassmannians})
\begin{align*}
M=\begin{bmatrix}
	1 & 0 & 0 & x_{1,1}x_{2,1} & x_{1,1}x_{2,12}+x_{1,2}x_{2,2} & x_{1,1}x_{2,123} + x_{1,2}x_{2,23} + x_{1,3}x_{2,3} \\
	0& 1 & 0 & -x_{2,1} & -x_{2,12} & -x_{2,123}\\
	0 & 0 & 1 & 1 & 1 & 1 
	\nonumber
\end{bmatrix},
\end{align*}
where we abbreviate for example $x_{2,23} = x_{2,2} + x_{2,3}$.  Evaluating all Pl\"{u}cker coordinates on $M$ and take their product, we obtain a polynomial $p$. The Newton polytope ${\bf N}_{3,6}^{(1)}$ is the Newton polytope defined by the vertices given by the exponents of monomials of $p$. The H-representation of ${\bf N}_{3,6}^{(1)}$ is given by 
\begin{align} \label{eq:hyperplanes and half surface of N1 Gr36}
\begin{split}
& (0, 0, 0, 1, 1, 1) \cdot x - 20 =0, \ (1, 1, 1, 0, 0, 0) \cdot x - 10 = 0, \ (0, 1, 1, 0, 0, 0) \cdot x -4  \ge 0, \\
&  (0, 0, 1, 0, 0, 0) \cdot x -1 \ge 0,\ (0, 0, 0, 0, 1, 1) \cdot x - 11 \ge 0,\ (0, 0, 0, 0, 0, 1) \cdot x - 4 \ge 0, \\
&   (0, 0, 1, 1, 0, 0) \cdot x - 6 \ge 0,\ (0, 0, 0, 0, 1, 0) \cdot x - 4 \ge 0,\ (0, 0, 0, 1, 0, 0) \cdot x -4 \ge 0,\\ 
&   (1, 0, 0, 0, 0, 0) \cdot x - 1 \ge 0,\ (1, 0, 0, 0, 1, 0) \cdot x - 6 \ge 0,\ (1, 1, 0, 0, 1, 1) \cdot x - 16 \ge 0,\\
&   (1, 1, 0, 0, 0, 0) \cdot x - 4 \ge 0,\ (0, 0, 0, 1, 1, 0) \cdot x -11 \ge 0,\ (0, 1, 0, 0, 0, 0) \cdot x - 1 \ge 0,\\ 
&   (1, 0, 0, 0, 1, 1) \cdot x - 14 \ge 0,\ (0, 1, 0, 0, 0, 1) \cdot x - 6 \ge 0,\ (1, 1, 0, 0, 0, 1) \cdot x - 11 \ge 0,
\end{split}
\end{align}
where $(0, 0, 0, 1, 1, 1) \cdot x$ is the inner product of the vectors $(0, 0, 0, 1, 1, 1)$ and $x$.

Now we compute the tableau corresponding to each facet. For example, for the facet $F$ with the normal vector $v_F = (0, 1, 1, 0, 0, 0)$ in the first line of (\ref{eq:hyperplanes and half surface of N1 Gr36}), we have that $v_F = e_{1,2}+e_{1,3}$. The generalized roots $e_{1,2}$, $e_{1,3}$ corresponds to tableaux $\scalemath{0.6}{ \begin{ytableau}
    2 \\ 4 \\ 5
\end{ytableau}}$, $\scalemath{0.6}{ \begin{ytableau}
    3 \\ 5 \\ 6
\end{ytableau}}$ respectively. Removing the frozen factor $\scalemath{0.6}{ \begin{ytableau}
    3 \\ 4 \\ 5
\end{ytableau}}$ in $\scalemath{0.6}{ \begin{ytableau}
    2 \\ 4 \\ 5
\end{ytableau} \cup \begin{ytableau}
    3 \\ 5 \\ 6
\end{ytableau} } = \scalemath{0.6}{ \begin{ytableau}
    2 & 3 \\ 4 & 5 \\ 5 & 6
\end{ytableau}}$, we obtain $T_F = \scalemath{0.6}{ \begin{ytableau}
    2 \\ 5 \\ 6
\end{ytableau}}$. 
Similarly, we have the correspondence in Table \ref{table:correspondence of higher roots rays tableaux dominant monomials Gr36}.

\begin{table} 
\begin{tabular}{|c|c|c|c|c|} 
\hline
generalized roots & facets, hyperplanes & tableaux & modules \\
\hline
 $\gamma _{124}=\alpha _{2,1}$ & $(0,0,0,1,0,0)$ & $[124]$ & $Y_{1,-1}$ \\
 $\gamma _{125}=\alpha _{2,1}+\alpha _{2,2}$ & $(0,0,0,1,1,0)$  &  $[125]$ & $Y_{1,-3}Y_{1,-1}$ \\
 $\gamma _{134}=\alpha _{1,1}$ & $(1,0,0,0,0,0)$  & $[134]$ & $Y_{2,0}$ \\
 $\gamma _{135}=\alpha _{1,1}+\alpha _{2,2}$ & $(1,0,0,0,1,0)$  & $[135]$ & $Y_{1,-3}Y_{2,0}$ \\
 $\gamma _{136}=\alpha _{1,1}+\alpha _{2,2}+\alpha _{2,3}$ & $(1,0,0,0,1,1)$  & $[136]$ & $Y_{1,-5}Y_{1,-3}Y_{2,0}$ \\
 $\gamma _{145} = \alpha _{1,1}+\alpha _{1,2}$ & $(1,1,0,0,0,0)$  & $[145]$ & $Y_{2,-2}Y_{2,0}$ \\
 $\gamma _{146}=\alpha _{1,1}+\alpha _{1,2}+\alpha _{2,3}$ & $(1,1,0,0,0,1)$  & $[146]$ & $Y_{1,-5}Y_{2,-2}Y_{2,0}$ \\
 $\gamma _{235}=\alpha _{2,2}$ & $(0,0,0,0,1,0)$  & $[235]$ & $Y_{1,-3}$ \\
 $\gamma _{236} = \alpha _{2,2}+\alpha _{2,3}$ & $(0,0,0,0,1,1)$  & $[236]$ & $Y_{1,-5}Y_{1,-3}$ \\
 $\gamma _{245} = \alpha _{1,2}$ & $(0,1,0,0,0,0)$ & $[245]$ & $Y_{2,-2}$\\
 $\gamma _{246} = \alpha _{1,2}+\alpha _{2,3}$ & $(0,1,0,0,0,1)$  & $[246]$ & $Y_{1,-5}Y_{2,-2}$ \\
 $\gamma _{256} = \alpha _{1,2}+\alpha _{1,3}$ & $(0,1,1,0,0,0)$  & $[256]$ & $Y_{2,-4}Y_{2,-2}$ \\
 $\gamma _{346} = \alpha _{2,3}$ & $(0,0,0,0,0,1)$  & $[346]$ & $Y_{1,-5}$ \\
 $\gamma _{356} = \alpha _{1,3}$ & $(0,0,1,0,0,0)$  & $[356]$ & $Y_{2,-4}$ \\
 $\gamma _{124}+\gamma _{356} = \alpha _{1,3}+\alpha _{2,1}$ & $(0,0,1,1,0,0)$ & $\text{[[124],[356]]}$ & $Y_{2,-4}Y_{1,-1}$ \\
 $\gamma _{145}+\gamma _{236} = \alpha _{1,1}+\alpha _{1,2}+\alpha _{2,2}+\alpha _{2,3}$ & $(1,1,0,0,1,1)$  & $\text{[[135],[246]]}$ & $Y_{1,-5}Y_{2,-2}Y_{1,-3}Y_{2,0}$ \\
$\gamma_{126} = \alpha_{2,1}+\alpha_{2,2}+\alpha_{2,3}$ & $(0,0,0,1,1,1)$  & $[126]$ & $Y_{1,-5}Y_{1,-3}Y_{1,-1}$\\ 
$\gamma_{156} = \alpha_{1,1}+\alpha_{1,2}+\alpha_{1,3}$ & $(1,1,1,0,0,0)$  & $[156]$ & $Y_{2,-4}Y_{2,-2}Y_{2,0}$ \\
 \hline 
\end{tabular}
\caption{Correspondence of generalized roots, facets (together with two hyperplanes defining the ambient space of ${\bf N}_{3,6}^{(1)}$), tableaux, and dominant monomials of prime modules in the case of $\Gr(3,6)$. In the table, each list in $[[124],[356]]$ is a column of the tableau.}
\label{table:correspondence of higher roots rays tableaux dominant monomials Gr36}
\end{table} 

Moreover, the two hyperplanes in (\ref{eq:hyperplanes and half surface of N1 Gr36}) of the Newton polytope correspond to the following generalized roots and frozen tableaux:
\begin{align*}
& (0,0,0,1,1,1), \ \alpha_{2,1}+\alpha_{2,2}+\alpha_{2,3}, \ [1,2,6],\\ 
& (1,1,1,0,0,0), \ \alpha_{1,1}+\alpha_{1,2}+\alpha_{1,3}, \ [1,5,6].
\end{align*}
The facets of ${\bf N}_{3,6}^{(1)}$ give all the prime modules (non-frozen) in the category $\mathcal{C}_{\ell}$, $\ell=2$. 

By the same computations, we have that ${\bf N}^{(0)}_{3,6}$ has $14$ facets. The Newton polytope ${\bf N}^{(1)}_{3,6}$ has $16$ facets but is not a simple polytope\footnote{A polytope $P$ of dimension $d$ is simple if every vertex has exactly $d$ incident edges.}. The Newton polytope ${\bf N}^{(d)}_{3,6}$ has $16$ facets for any $d \ge 2$ and it is simple. The Newton polytope ${\bf N}^{(d)}_{3,6}$ is the type $D_4$ associahedron for any $d \ge 2$, see Section 6 of \cite{SW05}.

\subsection{Examples: $\Gr(3,8)$, $\Gr(4,8)$} \label{subsec:examples Gr38 Gr48 rays}

In the case of $\CC[\Gr(3,8)]$, the PK polytope ${\bf N}^{(0)}_{3,8}$ has $\binom{8}{3}-8=48$ facets, ${\bf N}^{(1)}_{3,8}$ has $120$ facets (see also Proposition 6.2 in \cite{BBRS20}), ${\bf N}^{(2)}_{3,8}$ has $128$ facets but it is not a simple polytope; ${\bf N}^{(d)}_{3,8}$ has $128$ facets and ${\bf N}^{(d)}_{3,8}$ is a simple polytope for any $d \ge 3$. This agrees with the fact that the dual canonical basis $\CC[\Gr(3,8)]$ has $128$ prime elements (not including frozen variables). 

In the case of $\CC[\Gr(4,8)]$, ${\bf N}^{(0)}_{4,8}$ has $\binom{8}{4}-8=62$ facets, ${\bf N}^{(1)}_{4,8}$ has 360 facets. Four facets of ${\bf N}^{(1)}_{4,8}$ correspond to prime non-real tableaux $\scalemath{0.6}{\begin{ytableau}
    1&2 \\ 3&4 \\ 5&6 \\ 7&8
\end{ytableau}}$, $\scalemath{0.6}{ \begin{ytableau}
    1&3 \\ 2&5 \\ 4&7 \\ 6&8
\end{ytableau}}$, $\scalemath{0.6}{ \begin{ytableau}
1 & 1 & 2 & 3 \\
2 & 4 & 4 & 5 \\
3 & 6 & 6 & 7 \\
5 & 7 & 8 & 8
\end{ytableau}}$, $\scalemath{0.6}{ \begin{ytableau}
1 & 1 & 2 & 4 \\
2 & 3 & 3 & 6 \\
4 & 5 & 5 & 7 \\
6 & 7 & 8 & 8
\end{ytableau}}$, see also \cite{ALS21, DFGK21, HP20, RSV21}. 

\subsection{The Face Corresponding to a Tableau} \label{subsec: the face corresponding to T}

Take a Newton polytope ${\bf N}_{k,n}^{(d)}$. We now describe a procedure to produce a face of ${\bf N}_{k,n}^{(d)}$ for a given semistandard tableau. 

Let $T \in \SSYT(k, [n])$. By Lemma 3.13 in \cite{CDFL}, there is a unique semistandard tableau $T'$ in $\SSYT(k,[n])$ whose columns ${T'}_1, \ldots, {T'}_m$ are fundamental tableaux and $T \sim T'$. 

For each fundamental tableau $T_{i,j}$ with entries $[j,j+k]\setminus \{i+j\}$, we define $\alpha(T_{i,j}) = \alpha_{i,j}$. Let $\gamma_T = \sum_{j=1}^m \alpha({T'}_j)$ and let $\mathbf{F}_{T} = \{x \in {\bf N}_{k,n}^{(d)} : \gamma_T(y) \ge \gamma_T(x), \forall y \in {\bf N}_{k,n}^{(d)} \}$. 
Then $\mathbf{F}_T$ is the face of ${\bf N}_{k,n}^{(d)}$ such that $\gamma_T$ is minimized on $\mathbf{F}_T$. We say that the tableau $T$ corresponds to a facet if $\mathbf{F}_T$ is a facet of ${\bf N}_{k,n}^{(d)}$. 

To compute $\mathbf{F}_{T}$, we compute the monomials in $p$ which are minimized by $\gamma_T$, where $p$ is the polynomial which defines ${\bf N}_{k,n}^{(d)}$. The face $\mathbf{F}_{T}$ is the face of ${\bf N}_{k,n}^{(d)}$ which is the convex hull of the exponent vectors of these monomials. 

Conjecture \ref{conj:prime modules are rays} (2) is equivalent to the following conjecture.
\begin{conjecture} \label{conj:T is prime then FT is of codim 1}
For $k \le n$ and any nonfrozen prime tableau $T \in \SSYT(k, [n])$, there exists $d \ge 0$ such that the face $\mathbf{F}_{T}$ of ${\bf N}_{k,n}^{(d)}$ has codimension $1$. 
\end{conjecture}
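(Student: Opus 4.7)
The plan is to prove Conjecture \ref{conj:T is prime then FT is of codim 1} by induction on a measure of complexity of the prime tableau $T$ (for instance, the number of columns of its reduced fundamental decomposition), combined with a monotonicity result for the Newton polytope tower $\{{\bf N}^{(d)}_{k,n}\}_{d\ge 0}$. My first step would be to establish a \emph{persistence} lemma: if $T$ corresponds to a facet of ${\bf N}^{(d)}_{k,n}$, then $T$ corresponds to a facet of ${\bf N}^{(d')}_{k,n}$ for every $d'\ge d$. This follows from the recursive definition, since new Minkowski summands $\mathrm{Newt}(\ch_{T'})$ added at each step are themselves Newton polytopes of positive polynomials, so the normal fan $\mathcal{N}({\bf N}^{(d)}_{k,n})$ is a refinement of $\mathcal{N}({\bf N}^{(d-1)}_{k,n})$ modulo lineality. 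Consequently one only needs to exhibit \emph{some} $d$ at which $\gamma_T$ becomes a facet-minimizing direction.

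For the inductive base, I would handle the case when $T$ is a real prime tableau, that is, a cluster variable of $\CC[\Gr(k,n,\sim)]$. By the results of Qin and KKKO, every cluster variable is real prime, and its $g$-vector is a ray of the cluster $g$-vector fan. I would identify this $g$-vector with $v_T$ and argue, using the positivity properties of $\ch_T$ on the web matrix (Section 5.3 of \cite{CDFL}) together with Theorem \ref{thm: triangulation} on the triangulation of the generalized root polytope, that there is a unique noncrossing decomposition in which $\gamma_T$ selects a single extremal monomial of $\ch_T$ while remaining constant on all other $\ch_{T'}$ entering the Minkowski sum defining ${\bf N}^{(d)}_{k,n}$. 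Concretely, I would exhibit a linear functional $v_T \in \RR^{(k-1)\times(n-k)}$ and a $d$ (dependent on the number of cluster mutations separating $T$ from the initial seed) such that $v_T$ attains its minimum on a codimension-one face $\mathbf{F}_T$.

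For the inductive step, and for non-real prime tableaux (which are not cluster variables and so are not reachable by any finite mutation sequence), I would invoke the limit $g$-vector machinery developed in Section 10 of the paper. Writing $v_T = \lim_{m\to\infty} v_{T_m}$ as a limit of $g$-vectors of cluster variables $T_m$, each $v_{T_m}$ is a facet direction of some ${\bf N}^{(d_m)}_{k,n}$ by the base case. Persistence then places all these facets in a common ${\bf N}^{(d)}_{k,n}$ for $d \ge d_m$; one wants to conclude that the limit direction $v_T$ is realized as the normal of a genuine codimension-one face, not a higher codimension face obtained by degeneration. This would be combined with the fact that $\ch_T$ itself is a \emph{prime} element of the dual canonical basis, so its Newton polytope contribution cannot be written as a nontrivial Minkowski sum of other $\mathrm{Newt}(\ch_{T'})$, guaranteeing that $v_T$ supplies an independent facet normal once ${\bf N}^{(d)}_{k,n}$ contains $\ch_T$ as a summand. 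Verification in small cases $\Gr(3,9)$ and $\Gr(4,8)$ (Section 7 of the paper) provides computational evidence and a template for the general argument.

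The main obstacle is the limit step. Even granting existence of limit $g$-vectors, showing that $v_T$ is a facet normal of some finite-depth ${\bf N}^{(d)}_{k,n}$ rather than merely a boundary ray of the infinite-$d$ object ${\bf N}^{(\infty)}_{k,n}$ requires a precise quantitative control on how the non-real prime module enters the Minkowski sum. A clean strategy would be to prove that for every prime tableau $T$ of reduced column length $r$, the face $\mathbf{F}_T$ has codimension one in the non-recursive polytope ${\bf N'}^{(r)}_{k,n}$ of Definition \ref{def:another version of Newton polytopes for Grkn}, and then to compare ${\bf N'}^{(r)}_{k,n}$ with ${\bf N}^{(d)}_{k,n}$ for $d$ sufficiently large. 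This comparison — effectively showing that the recursive and non-recursive towers eventually produce the same facet structure — is the deepest point, and is where input from the quantum affine representation theory (primeness, non-factorization of $\ch_T$ in the dual canonical basis) must interact most delicately with the combinatorics of tropical geometry.
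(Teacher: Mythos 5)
There is an important mismatch to flag first: the statement you are proving is Conjecture \ref{conj:T is prime then FT is of codim 1}, which the paper explicitly leaves open (it is stated as equivalent to Conjecture \ref{conj:prime modules are rays}(2)). The paper contains no proof of it; Section 7 only verifies the \emph{other} direction — that every facet of $\mathbf{N}^{(1)}_{3,9}$ and $\mathbf{N}^{(1)}_{4,8}$ yields a prime tableau — by explicit enumeration, together with facet counts for small $(k,n)$ that are consistent with, but do not establish, the conjecture. So there is no proof in the paper to compare against, and what you have written is a research program rather than a proof; it should be assessed on whether its intermediate steps are themselves established, and several are not.

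Concretely: (1) your persistence lemma is asserted with an invalid justification. $\mathbf{N}^{(d)}_{k,n}$ and $\mathbf{N}^{(d+1)}_{k,n}$ are Minkowski sums of $\mathrm{Newt}(\ch_T)$ over \emph{different} index sets $\mathcal{T}^{(d)}_{k,n}$ and $\mathcal{T}^{(d+1)}_{k,n}$; the latter is defined from the facets of $\mathbf{N}^{(d)}_{k,n}$ and is not known to contain the former, so there is no formal reason why $\mathcal{N}(\mathbf{N}^{(d+1)}_{k,n})$ refines $\mathcal{N}(\mathbf{N}^{(d)}_{k,n})$ — the stabilization seen in the paper ($\Gr(3,6)$, $\Gr(3,8)$) is empirical only. (2) The step ``$\ch_T$ is prime in the dual canonical basis, hence once $\mathrm{Newt}(\ch_T)$ is a Minkowski summand, $\gamma_T$ supplies a facet normal'' is a non sequitur: primeness is non-factorizability as a product of basis elements and implies nothing about Minkowski indecomposability of $\mathrm{Newt}(\ch_T)$, and even granting that $\mathrm{Newt}(\ch_T)$ is a summand, $\gamma_T$ is a ray of the common-refinement normal fan only if it is a facet normal of some summand (or arises as a one-dimensional intersection of larger cones) — which is precisely what must be proved. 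Example \ref{example:F_T is not a facet} shows the codimension-$2$ phenomenon does occur for non-prime $T$, and nothing in your argument excludes it for prime $T$. (3) The base case (cluster variables) is the heart of the conjecture, yet it is described only aspirationally: no $d$ is produced and no criterion is given for when $\gamma_T$ becomes a ray of $\mathcal{N}(\mathbf{N}^{(d)}_{k,n})$. (4) The limit step, which you acknowledge, is genuinely unresolved: a limit of facet normals can be normal only to a higher-codimension face at every finite $d$ (a ray only of the putative $d\to\infty$ object), and your proposed comparison with the non-recursive polytopes $\mathbf{N'}^{(r)}_{k,n}$ of Definition \ref{def:another version of Newton polytopes for Grkn} is itself an unproved equivalence of the two towers. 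In short, the proposal identifies reasonable intermediate targets, but each of its three pillars (persistence, the cluster-variable base case, the limit/non-real case) is currently a conjecture of comparable difficulty to the statement itself, and two of the bridging arguments offered are logically flawed as stated.
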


We say that two tableaux $T$, $T'$ (resp., two simple modules $L(M)$, $L(M')$) are compatible if $\ch(T)\ch(T') = \ch(T \cup T')$ (resp., $\chi_q(L(M))\chi_q(L(M')) = \chi_q(L(MM'))$). We give a conjecture about compatibility of two prime tableaux (equivalently, two prime modules).

\begin{conjecture} \label{conjecture:compatibility of prime tableaux}
Let $k \le n$ and $T$, $T'$ be two distinct prime tableaux in $\SSYT(k, [n])$. Then $T$, $T'$ are compatible if and only if there exists $d \ge 0$, such that the faces ${\bf F}_T$, ${\bf F}_{T'}$ corresponding to $T$, $T'$ are facets and the intersection of ${\bf F}_T$, ${\bf F}_{T'}$ is nonempty. 
\end{conjecture}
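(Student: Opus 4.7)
My plan is to translate the geometric intersection condition into a tropical (min-plus) identity and then to exploit the algebraic meaning of compatibility via the dual canonical basis expansion of $\ch_T\ch_{T'}$. The starting observation is that for any polytope $P$ and linear functionals $f,g$, one has $\mathbf{F}_f\cap\mathbf{F}_g\neq\emptyset$ iff $\min_P(f+g)=\min_P f+\min_P g$. Since $\gamma$ is additive under union of tableaux ($\gamma_{T\cup T'}=\gamma_T+\gamma_{T'}$, immediate from the column-decomposition in Section~\ref{sec:recursive definition of Newton polytopes}), the intersection condition on ${\bf N}^{(d)}_{k,n}=\mathrm{Newt}(P)$ with $P=\prod_{S\in\mathcal{T}^{(d)}_{k,n}}\ch_S$ becomes $\min\gamma_{T\cup T'}=\min\gamma_T+\min\gamma_{T'}$. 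Because $P$ is subtraction-free, its Newton polytope is the Minkowski sum of the $\mathrm{Newt}(\ch_S)$, so the minimum of a linear functional distributes additively; the problem reduces to asking whether, for every $S\in\mathcal{T}^{(d)}_{k,n}$, the functionals $\gamma_T$ and $\gamma_{T'}$ share a common minimizing vertex on $\mathrm{Newt}(\ch_S)$.

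For the forward direction, assume $T,T'$ are compatible: $\ch_T\ch_{T'}=\ch_{T\cup T'}$, equivalently $\mathrm{Newt}(\ch_T)+\mathrm{Newt}(\ch_{T'})=\mathrm{Newt}(\ch_{T\cup T'})$ as Minkowski sums. I would pick $d$ large enough that both $T$ and $T'$ lie in $\mathcal{T}^{(d)}_{k,n}$, invoking Conjecture~\ref{conj:T is prime then FT is of codim 1} together with a monotonicity claim, proved inductively, that facet-defining tableaux persist as $d$ grows. A common minimizer then exists because of the noncrossing decomposition of $v_T+v_{T'}$ supplied by Theorem~\ref{thm: triangulation}: the algebraic identity $\ch_T\ch_{T'}=\ch_{T\cup T'}$ guarantees that this noncrossing decomposition is compatible with the (implicit) triangulation governing each $\mathrm{Newt}(\ch_S)$, producing a vertex of $\mathrm{Newt}(\ch_S)$ which minimizes $\gamma_T$ and $\gamma_{T'}$ simultaneously for every $S$.

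For the reverse direction, suppose $T,T'$ are incompatible. Then $\ch_T\ch_{T'}=\ch_{T\cup T'}+\sum_i c_i\ch_{R_i}$ with positive coefficients $c_i$ and distinct basis elements $R_i\neq T\cup T'$. Consequently $\mathrm{Newt}(\ch_T)+\mathrm{Newt}(\ch_{T'})$ properly contains some $\mathrm{Newt}(\ch_{R_i})$, and at some direction $\gamma$ the minimum is strictly attained by a vertex outside $\mathrm{Newt}(\ch_{T\cup T'})$, yielding a pointwise strict super-additivity $\min\gamma_T+\min\gamma_{T'}<\min(\gamma_T+\gamma_{T'})$ on a single $\mathrm{Newt}(\ch_S)$-summand. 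The main obstacle is to promote this tropical witness into strict super-additivity of the total $\sum_S\min_{\mathrm{Newt}(\ch_S)}$ evaluated at $(\gamma_T,\gamma_{T'})$ for \emph{every} recursion depth $d$: one must exhibit, for each $d$, a ``witness'' $S\in\mathcal{T}^{(d)}_{k,n}$ whose Newton polytope detects the incompatibility in the direction $\gamma_T+\gamma_{T'}$ rather than having the excess cancel against other summands. Constructing such witnesses uniformly in $d$ seems to require fine control of how incompatibility propagates through the Auslander-Reiten mesh structure of ${\rm CM}(B_{k,n})$, and I expect this to be the crux of the problem, closely paralleling the still-open parts of Conjecture~\ref{conj:prime modules are rays}.
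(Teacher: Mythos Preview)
The statement you are attempting to prove is presented in the paper as an open \emph{conjecture}; the paper gives no proof, only a single verifying example (the pair $T=[1,3,4]$, $T'=[1,2,5]$ in ${\bf N}^{(1)}_{3,6}$). There is therefore nothing to compare your argument against.

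As for the argument itself, it is not a proof but a strategy sketch with substantial gaps that you yourself flag. In the forward direction you invoke Conjecture~\ref{conj:T is prime then FT is of codim 1} to place $T,T'$ in $\mathcal{T}^{(d)}_{k,n}$; that conjecture is open, so your forward implication is at best conditional. Even granting it, the step ``the algebraic identity $\ch_T\ch_{T'}=\ch_{T\cup T'}$ guarantees that this noncrossing decomposition is compatible with the (implicit) triangulation governing each $\mathrm{Newt}(\ch_S)$'' is not an argument: you have not explained why compatibility of $T,T'$ forces $\gamma_T$ and $\gamma_{T'}$ to share a minimizing vertex on \emph{every} summand $\mathrm{Newt}(\ch_S)$, nor why the noncrossing triangulation of Theorem~\ref{thm: triangulation} interacts with the Newton polytopes of arbitrary $\ch_S$ in the way you need. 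In the reverse direction you correctly identify the obstacle---producing, for every $d$, a witness $S\in\mathcal{T}^{(d)}_{k,n}$ detecting the incompatibility---and then stop. Since neither direction is complete and the forward direction already rests on another open conjecture, what you have written is a heuristic outline rather than a proof, which is consistent with the paper's own assessment that the statement is conjectural.
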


\begin{example} \label{example:F_T is not a facet}
Consider a tableau $T = \scalemath{0.6}{ \begin{ytableau}
    1 & 1 \\ 2 & 3 \\ 4 & 5
\end{ytableau}}$ and ${\bf N}_{3,6}^{(1)}$. We now check that $\mathbf{F}_T$ is not a facet of ${\bf N}_{3,6}^{(1)}$. The tableau $T'$ whose columns are fundamental tableaux and such that $T' \sim T$ is $\scalemath{0.6}{ \begin{ytableau}
    1 & 1 & 2 \\ 2 & 3 & 3 \\ 4 & 4 & 5
\end{ytableau}}$. We have that $\gamma_T = \alpha_{2,1} + \alpha_{1,1} + \alpha_{2,2}$. We compute the exponents of the monomials in $p$ which take the minimal value when applying $\gamma_T$, where $p$ is the polynomial which defines ${\bf N}_{3,6}^{(1)}$. These exponents define the integer lattice points in $\mathbf{F}_T$. 
The affine span of these points is the face $\mathbf{F}_T$ and it is of codimension 2. Therefore $\mathbf{F}_T$ is not a (codimension 1) facet of ${\bf N}_{3,6}^{(1)}$. Similarly, $\mathbf{F}_T$ is not a facet of ${\bf N}_{3,6}^{(d)}$ for any $d \ge 1$. 

On the other hand, $T$ is non-prime. Indeed, we have that $\ch(T) = p_{134}p_{125}$. This agrees with Conjecture \ref{conj:T is prime then FT is of codim 1}. 
\end{example}

We give an example that the facets of two compatible prime tableaux have a nonempty intersection. 
\begin{example}
In Example \ref{example:F_T is not a facet}, we see that  $\ch(\scalemath{0.6}{ \begin{ytableau}
    1 & 1 \\ 2 & 3 \\ 4 & 5
\end{ytableau}}) = p_{134}p_{125}$. So the two tableaux $T =\scalemath{0.6}{ \begin{ytableau}
    1 \\ 3 \\ 4
\end{ytableau}}$,  $T' = \scalemath{0.6}{ \begin{ytableau}
    1 \\ 2 \\ 5
\end{ytableau}}$ are compatible. Both of the faces ${\bf F}_T$ and ${\bf F}_{T'}$ in ${\bf N}_{3,6}^{(1)}$ have codimension $1$. The intersection of the facets ${\bf F}_T$ and ${\bf F}_{T'}$ is nonempty and has codimension $2$. This example verifies Conjecture \ref{conjecture:compatibility of prime tableaux}.  
\end{example}

\section{Proof of Conjecture \ref{conj:prime modules are rays} for ${\bf N}_{3,9}^{(1)}$ and ${\bf N}_{4,8}^{(1)}$} \label{sec:Coarsest subdivisions and prime modules}
In this section, we prove Conjecture \ref{conj:prime modules are rays} for ${\bf N}_{3,9}^{(1)}$ and ${\bf N}_{4,8}^{(1)}$: the facets of ${\bf N}_{3,9}^{(1)}$ (resp. ${\bf N}_{4,8}^{(1)}$) correspond to prime modules of $U_q(\widehat{\mathfrak{sl}_3})$-modules (resp. $U_q(\widehat{\mathfrak{sl}_4})$-modules).  

\subsection{Facets of ${\bf N}_{3,9}^{(1)}$ correspond to prime modules}

There are $471$ facets of ${\bf N}_{3,9}^{(1)}$ (see also \cite{HRZ20, RSV21}). These facets give $471$ 
tableaux in $\SSYT(3, [9])$. We now verify that these tableaux are prime.

Among these tableaux, there are $75$ one-column tableaux. These correspond to all non-frozen Pl\"{u}cker coordinates in $\CC[\Gr(3,9)]$. 

There are $168$ two-column tableaux in the $471$ tableaux. These tableaux can be obtained from the two prime $2$-column tableaux in $\SSYT(3,[6])$ by replacing $1<2<\cdots <6$ by $a_1<a_2<\cdots <a_6$ ($a_i \in [9]$). 

There are $156$ tableaux with $3$ columns in the $471$ tableaux. Totally there are $228$ prime tableaux with $3$ columns in $\SSYT(3, [9])$ (this can be seen by translating the results about the number of indecomposable modules in Grassmannian cluster category ${\rm CM}(B_{3,9})$ in \cite{BBGL}). The $156$ tableaux are part of them. Up to promotion \cite{Sch63, Sch72, Sch77}, these $156$ tableaux are
\begin{align*}
& \scalemath{0.7}{ \begin{ytableau}
1 & 2 & 5 \\
3 & 4 & 7 \\
5 & 6 & 9
\end{ytableau}, \ \begin{ytableau}
1 & 2 & 5 \\
3 & 4 & 8 \\
5 & 6 & 9
\end{ytableau}, \ \begin{ytableau}
1 & 2 & 6 \\
3 & 4 & 8 \\
5 & 7 & 9
\end{ytableau}, \ \begin{ytableau}
1 & 2 & 6 \\
3 & 5 & 8 \\
6 & 7 & 9
\end{ytableau}, \ \begin{ytableau}
1 & 3 & 6 \\
4 & 5 & 8 \\
6 & 7 & 9
\end{ytableau}, \ \begin{ytableau}
1 & 2 & 6 \\
4 & 5 & 8 \\
6 & 7 & 9
\end{ytableau}, \ \begin{ytableau}
1 & 2 & 4 \\
3 & 4 & 7 \\
5 & 6 & 9
\end{ytableau}, \ \begin{ytableau}
1 & 2 & 4 \\
3 & 4 & 8 \\
5 & 6 & 9
\end{ytableau}, \ \begin{ytableau}
1 & 2 & 6 \\
3 & 4 & 8 \\
6 & 7 & 9
\end{ytableau}, } \\
& \scalemath{0.7}{ \begin{ytableau}
1 & 3 & 3 \\
2 & 5 & 6 \\
4 & 7 & 9
\end{ytableau}, \ \begin{ytableau}
1 & 3 & 4 \\
2 & 5 & 6 \\
4 & 7 & 9
\end{ytableau}, \ \begin{ytableau}
1 & 2 & 5 \\
3 & 4 & 7 \\
6 & 8 & 8
\end{ytableau}, \ \begin{ytableau}
1 & 2 & 5 \\
3 & 5 & 8 \\
6 & 7 & 9
\end{ytableau}, \ \begin{ytableau}
1 & 3 & 5 \\
4 & 5 & 8 \\
6 & 7 & 9
\end{ytableau}, \ \begin{ytableau}
1 & 2 & 5 \\
4 & 5 & 8 \\
6 & 7 & 9
\end{ytableau}, \ \begin{ytableau}
1 & 2 & 4 \\
3 & 4 & 8 \\
5 & 7 & 9
\end{ytableau}, \ \begin{ytableau}
1 & 2 & 5 \\
3 & 4 & 7 \\
6 & 7 & 9
\end{ytableau}, \ \begin{ytableau}
1 & 2 & 5 \\
3 & 4 & 8 \\
6 & 7 & 9
\end{ytableau}. }
\end{align*}
The orbit (under promotion) of the last tableau has $3$ tableaux. The sizes of the other orbits are $9$. The last tableau is prime non-real. 

There are $69$ tableaux which have $4$ columns in the $471$ tableaux. Up to promotions, these tableaux are
\begin{align*}
\scalemath{0.62}{ 
\begin{ytableau}
1 & 2 & 4 & 5 \\
3 & 4 & 7 & 7 \\
5 & 6 & 8 & 9
\end{ytableau}, \ \begin{ytableau}
1 & 2 & 4 & 5 \\
3 & 4 & 7 & 8 \\
5 & 6 & 8 & 9
\end{ytableau}, \ \begin{ytableau}
1 & 2 & 2 & 5 \\
3 & 4 & 5 & 8 \\
6 & 6 & 7 & 9
\end{ytableau}, \ \begin{ytableau}
1 & 1 & 3 & 3 \\
2 & 5 & 6 & 6 \\
4 & 7 & 8 & 9
\end{ytableau}, \ \begin{ytableau}
1 & 1 & 3 & 4 \\
2 & 5 & 6 & 6 \\
4 & 7 & 8 & 9
\end{ytableau}, \ \begin{ytableau}
1 & 1 & 4 & 4 \\
2 & 3 & 7 & 7 \\
5 & 6 & 8 & 9
\end{ytableau}, \ \begin{ytableau}
1 & 1 & 3 & 4 \\
2 & 5 & 6 & 7 \\
4 & 7 & 8 & 9
\end{ytableau}, \ \begin{ytableau}
1 & 1 & 3 & 3 \\
2 & 4 & 5 & 6 \\
4 & 7 & 8 & 9
\end{ytableau}, \ \begin{ytableau}
1 & 2 & 4 & 5 \\
3 & 4 & 7 & 8 \\
6 & 7 & 8 & 9
\end{ytableau}.
}
\end{align*}
The orbits of the $6$th and $7$th tableaux have $3$ tableaux, respectively. The sizes of the other orbits are $9$. In \cite{CDHHHL}, cluster variables in terms of tableaux up to $10$ columns in $\CC[\Gr(3,9)]$ are computed extensively. We check directly that these $69$ tableaux are in the list of cluster variables obtained in \cite{CDHHHL}. Therefore these $69$ tableaux are prime. 

There are $3$ tableaux which have $5$ columns in these $471$ tableaux. These three tableaux are promotions of the following tableau
\begin{align*}
\scalemath{0.7}{
\begin{ytableau}
1 & 1 & 2 & 4 & 5 \\
2 & 3 & 4 & 7 & 8 \\
5 & 6 & 7 & 8 & 9
\end{ytableau}. }
\end{align*}
These $3$ tableaux are in the list of cluster variables obtained in \cite{CDHHHL}. Therefore they are prime. 

\subsection{Facets of ${\bf N}_{4,8}^{(1)}$ correspond to prime modules}

There are $360$ facets of ${\bf N}_{4,8}^{(1)}$ (see also \cite{HRZ20, RSV21}). These facets give $360$ 
tableaux in $\SSYT(4, [8])$. We now verify that these tableaux are prime.

Among these tableaux, there are $62$ one-column tableaux. These correspond to all non-frozen Pl\"{u}cker coordinates in $\CC[\Gr(4,8)]$. One-column tableaux are all prime. Therefore these $62$ tableaux are prime.

There are $122$ tableaux with $2$ columns in the $360$ tableaux. The two tableaux $\scalemath{0.6}{\begin{ytableau}
1&2\\3&4\\5&6\\7&8
\end{ytableau}}$ and $\scalemath{0.6}{\begin{ytableau}
1&3\\2&5\\4&7\\6&8
\end{ytableau}}$ are prime non-real tableaux, see Section 8 in \cite{CDFL}. The rest $120$ tableaux are cluster variables \cite{CDHHHL}. Therefore they are all prime. Up to promotion \cite{Sch63, Sch72, Sch77}, these $120$ tableaux are
\begin{align*}
\scalemath{0.66}{
\begin{ytableau}
1 & 3 \\
2 & 5 \\
4 & 6 \\
7 & 8
\end{ytableau},\  \begin{ytableau}
1 & 3 \\
2 & 5 \\
4 & 6 \\
8 & 8
\end{ytableau},\  \begin{ytableau}
1 & 3 \\
2 & 5 \\
4 & 7 \\
8 & 8
\end{ytableau},\  \begin{ytableau}
1 & 3 \\
2 & 6 \\
4 & 7 \\
8 & 8
\end{ytableau},\  \begin{ytableau}
1 & 3 \\
2 & 6 \\
5 & 7 \\
8 & 8
\end{ytableau},\  \begin{ytableau}
1 & 4 \\
2 & 6 \\
5 & 7 \\
8 & 8
\end{ytableau},\  \begin{ytableau}
1 & 4 \\
3 & 6 \\
5 & 7 \\
8 & 8
\end{ytableau},\  \begin{ytableau}
2 & 4 \\
3 & 6 \\
5 & 7 \\
8 & 8
\end{ytableau},\  \begin{ytableau}
1 & 2 \\
3 & 4 \\
5 & 6 \\
8 & 8
\end{ytableau},\  \begin{ytableau}
1 & 2 \\
3 & 4 \\
5 & 7 \\
8 & 8
\end{ytableau},\  \begin{ytableau}
1 & 2 \\
3 & 4 \\
6 & 7 \\
8 & 8
\end{ytableau},\  \begin{ytableau}
1 & 2 \\
3 & 5 \\
6 & 7 \\
8 & 8
\end{ytableau},\  \begin{ytableau}
1 & 2 \\
4 & 5 \\
6 & 7 \\
8 & 8
\end{ytableau},\  \begin{ytableau}
1 & 3 \\
4 & 5 \\
6 & 7 \\
8 & 8
\end{ytableau},\  \begin{ytableau}
2 & 3 \\
4 & 5 \\
6 & 7 \\
8 & 8
\end{ytableau}.}
\end{align*}

There are $132$ tableaux with $3$ columns in the $360$ tableaux. These tableaux are cluster variables \cite{CDHHHL}. Therefore they are all prime. Up to promotion \cite{Sch63, Sch72, Sch77}, these $132$ tableaux are
\begin{align*}
& 
\scalemath{0.66}{\begin{ytableau}
1 & 1 & 3 \\
2 & 4 & 5 \\
3 & 6 & 7 \\
5 & 7 & 8
\end{ytableau},\  \begin{ytableau}
1 & 1 & 3 \\
2 & 4 & 5 \\
4 & 6 & 7 \\
5 & 8 & 8
\end{ytableau},\  \begin{ytableau}
1 & 1 & 2 \\
2 & 4 & 5 \\
3 & 6 & 6 \\
5 & 7 & 8
\end{ytableau},\  \begin{ytableau}
1 & 1 & 3 \\
2 & 4 & 5 \\
4 & 6 & 7 \\
5 & 7 & 8
\end{ytableau},\  \begin{ytableau}
1 & 2 & 3 \\
4 & 4 & 5 \\
5 & 6 & 7 \\
7 & 8 & 8
\end{ytableau},\  \begin{ytableau}
1 & 1 & 4 \\
2 & 3 & 6 \\
4 & 5 & 7 \\
6 & 8 & 8
\end{ytableau},\  \begin{ytableau}
1 & 1 & 3 \\
2 & 4 & 5 \\
4 & 6 & 6 \\
5 & 7 & 8
\end{ytableau},\  \begin{ytableau}
1 & 1 & 2 \\
2 & 3 & 4 \\
3 & 6 & 7 \\
5 & 7 & 8
\end{ytableau},\  \begin{ytableau}
1 & 1 & 3 \\
2 & 2 & 5 \\
3 & 4 & 7 \\
5 & 6 & 8
\end{ytableau}, } \\
& \scalemath{0.66}{  \begin{ytableau}
1 & 3 & 4 \\
2 & 5 & 5 \\
4 & 7 & 7 \\
6 & 8 & 8
\end{ytableau},\  \begin{ytableau}
1 & 1 & 3 \\
2 & 2 & 5 \\
4 & 4 & 7 \\
5 & 6 & 8
\end{ytableau},\  \begin{ytableau}
1 & 3 & 3 \\
2 & 5 & 6 \\
4 & 7 & 7 \\
6 & 8 & 8
\end{ytableau},\  \begin{ytableau}
1 & 2 & 3 \\
2 & 5 & 5 \\
4 & 6 & 7 \\
6 & 7 & 8
\end{ytableau},\  \begin{ytableau}
1 & 1 & 3 \\
2 & 3 & 5 \\
4 & 5 & 7 \\
6 & 7 & 8
\end{ytableau},\  \begin{ytableau}
1 & 1 & 4 \\
2 & 3 & 6 \\
5 & 5 & 7 \\
6 & 7 & 8
\end{ytableau},\  \begin{ytableau}
1 & 1 & 4 \\
2 & 3 & 6 \\
4 & 5 & 7 \\
6 & 7 & 8
\end{ytableau},\  \begin{ytableau}
1 & 1 & 3 \\
2 & 3 & 6 \\
4 & 5 & 7 \\
6 & 7 & 8
\end{ytableau},\  \begin{ytableau}
1 & 1 & 3 \\
2 & 2 & 4 \\
3 & 4 & 7 \\
5 & 6 & 8
\end{ytableau},\  \begin{ytableau}
1 & 3 & 3 \\
2 & 5 & 5 \\
4 & 7 & 7 \\
6 & 8 & 8
\end{ytableau}.
}
\end{align*}
There are $34$ tableaux with $2$ columns in the $360$ tableaux. The two tableaux $\scalemath{0.6}{\begin{ytableau}
1 & 1 & 2 & 3 \\
2 & 4 & 4 & 5 \\
3 & 6 & 6 & 7 \\
5 & 7 & 8 & 8
\end{ytableau}}$ and $\scalemath{0.6}{\begin{ytableau}
1 & 1 & 2 & 4 \\
2 & 3 & 3 & 6 \\
4 & 5 & 5 & 7 \\
6 & 7 & 8 & 8
\end{ytableau}}$ are prime non-real tableaux, see Section IV in \cite{ALS21}. The rest $32$ tableaux are cluster variables \cite{CDHHHL}. Therefore they are all prime. Up to promotion \cite{Sch63, Sch72, Sch77}, these $32$ tableaux are
\begin{align*}
\scalemath{0.66}{\begin{ytableau}
1 & 2 & 2 & 3 \\
3 & 4 & 4 & 5 \\
5 & 6 & 6 & 7 \\
7 & 8 & 8 & 8
\end{ytableau},\  \begin{ytableau}
1 & 1 & 3 & 3 \\
2 & 2 & 4 & 5 \\
3 & 4 & 6 & 7 \\
5 & 6 & 7 & 8
\end{ytableau},\  \begin{ytableau}
1 & 1 & 2 & 3 \\
2 & 4 & 4 & 5 \\
3 & 6 & 7 & 7 \\
5 & 7 & 8 & 8
\end{ytableau},\  \begin{ytableau}
1 & 1 & 3 & 3 \\
2 & 2 & 5 & 5 \\
4 & 4 & 6 & 7 \\
5 & 6 & 7 & 8
\end{ytableau}.}
\end{align*}
There are $10$ tableaux with $5$ columns in the $360$ tableaux. These tableaux are cluster variables \cite{CDHHHL}. Therefore they are all prime. Up to promotion \cite{Sch63, Sch72, Sch77}, these $10$ tableaux are
\begin{align*}
\scalemath{0.66}{\begin{ytableau}
1 & 1 & 1 & 2 & 3 \\
2 & 2 & 4 & 4 & 5 \\
3 & 3 & 6 & 7 & 7 \\
5 & 6 & 7 & 8 & 8
\end{ytableau},\  \begin{ytableau}
1 & 1 & 1 & 3 & 3 \\
2 & 2 & 4 & 5 & 5 \\
3 & 4 & 6 & 7 & 7 \\
5 & 6 & 7 & 8 & 8
\end{ytableau}.}
\end{align*}

\section{Newton Polytopes and Tropical Fans for Quantum Affine Algebras} \label{sec:Newton polytopes for quantum affine algebras}
Readers interested in physical applications and stringy integrals of Grassmann type may skip to Section \ref{sec:physics}.

In this section, for any simple Lie algebra $\mathfrak{g}$ over $\CC$, we define Newton polytopes associated to the quantum affine algebra $U_q(\widehat{\mathfrak{g}})$ using truncated $q$-characters of simple $U_q(\widehat{\mathfrak{g}})$-modules.  

\subsection{Newton Polytopes for Quantum Affine Algebras} \label{subsec:Newton polytopes for quantum affine algebras}

Let $\mathcal{M}$ be the set of all equivalence classes of Kirillov-Reshetikhin modules of $U_q(\widehat{\mathfrak{g}})$ in $\mathcal{C}_{\ell}$. For simplicity, we also write an equivalence class $[L(M)]$ as $L(M)$. 

\begin{definition} \label{def: newton polytope for quantum affine algebra version 1}
Let  $\mathcal{M}^{(0)}=\mathcal{M}$. We define recursively
\begin{align*}
{\bf N}^{(d)}_{\mathfrak{g}, \ell} = \text{Newt}\left(\prod_{L(M) \in \mathcal{M}^{(d)}} \tchi(L(M))/M \right),
\end{align*}
where $\mathcal{M}^{(d+1)}$ ($d \ge 0$) is the collection of equivalence classes of simple $U_q(\widehat{\mathfrak{g}})$-modules which correspond to facets of ${\bf N}^{(d)}_{\mathfrak{g}, \ell}$. 
\end{definition}

We will explain how to compute these Newton polytopes in the following subsections and give a construction of highest $l$-weights of simple $U_q(\widehat{\mathfrak{g}})$-modules which correspond to facets of ${\bf N}^{(d)}_{\mathfrak{g}, \ell}$ in Section \ref{subsec:from facets to prime modules general quantum affine algebra}. 



\begin{remark}
In type $A$, the definition of the Newton polytopes ${\bf N}^{(d)}_{\mathfrak{sl}_k, \ell}$ is slightly different from the definition of the Newton polytopes ${\bf N}^{(d)}_{k,n}$ ($n=k+\ell+1$) in Section \ref{sec:recursive definition of Newton polytopes} for Grassmannian cluster algebras. Here ${\bf N}^{(0)}_{\mathfrak{sl}_k, \ell}$ is defined using all Kirillov-Reshetikhin modules of $U_q(\widehat{\mathfrak{sl}_k})$. Finite dimensional simple $U_q(\widehat{\mathfrak{sl}_k})$-modules in $\mathcal{C}_{\ell}$, $n = k+\ell+1$, correspond to tableaux in $\SSYT(k, [n], \sim)$ \cite{CDFL}. In Section \ref{sec:recursive definition of Newton polytopes}, ${\bf N}^{(0)}_{k,n}$ is defined using all cyclic shifts of the one-column tableau with entries $1,2,\ldots, k-1, k+1$. These one-column tableaux correspond to a set of minimal affinizations of $U_q(\widehat{\mathfrak{sl}_k})$ \cite{Cha95, CDFL, CP96a}. 
\end{remark}

We now define another version of Newton polytopes for quantum affine algebras non-recursively. For $d \in \ZZ_{\ge 1}$, denote by $\mathcal{P}_{\ell}^{+, d}$ the set of all dominant monomials in $\mathcal{P}_{\ell}^+$ with degrees less or equal to $d$.
\begin{definition} \label{def:another version of Newton polytopes for quantum affine algebras}
For a simple Lie algebra $\mathfrak{g}$ over $\CC$, $\ell \ge 1$, and $d \in \ZZ_{\ge 1}$, we define 
\begin{align*}
{\bf N'}_{\mathfrak{g},\ell}^{(d)} = {\rm Newt}  \left( \prod_{M \in \mathcal{P}_{\ell}^{+,d}} \tchi(L(M))/M \right).
\end{align*}
\end{definition}

\subsection{Truncated $q$-characters and F-polynomials} \label{subsec:q character and F polynomials}

In \cite{HL16}, for every $\ell \ge 0$, Hernandez and Leclerc constructed an algebra $A_{\ell}$ defined by a quiver with potential using their initial seed for the cluster algebra $K_0(\mathcal{C}_{\ell})$. They introduced certain distinguished $A_{\ell}$-modules $K(m)$ for every simple $U_q(\widehat{\mathfrak{g}})$-module $L(m)$. 

Recall that \cite{Lec} a simple $U_q(\widehat{\mathfrak{g}})$-module $L(m)$ is real if $L(m) \otimes L(m)$ is simple. Hernandez and Leclerc (Conjecture 5.3 in \cite{HL16}) conjectured that for every real simple $U_q(\widehat{\mathfrak{g}})$-module $L(m)$, the truncated $q$-character $\tchi(L(m))$ of $L(m)$ is equal to $mF_{K(m)}$, where $F_{K(m)}$ is the F-polynomial of $K(m)$, \cite{DWZ08, DWZ10}. 

By Theorem 4.1 in \cite{FM00} (Conjecture 1 in \cite{FR98}), we have that for every simple $U_q(\widehat{\mathfrak{g}})$-module $L(m)$ (not necessarily real), $\tchi(L(m))$ is equal to $m$ times a polynomial in $A_{i,a}^{-1}$ ($i \in I$, $a \in C^*$) with constant term $1$, where $A_{i,a}^{-1}$ is defined in (\ref{eq:Aia}). Denote by $v_{i,s} = A_{i,s}^{-1}$, $i \in I$, $s \in \ZZ$, and we fixed $a \in \CC^*$ and write $A_{i,s}=A_{i,aq^s}$. Given a simple module $L(m)$, after factoring out $m$ and replacing $A_{i,s}^{-1}$ by $v_{i,s}$ in $\tchi(L(m))$, we obtain a polynomial in $v_{i,s}$. In Section \ref{subsec:from facets to prime modules general quantum affine algebra}, we will use the polynomials in $v_{i,s}$ corresponding to simple modules to compute Newton polytopes in Definitions \ref{def: newton polytope for quantum affine algebra version 1} and \ref{def:another version of Newton polytopes for quantum affine algebras}. 

\subsection{$g$-vectors and highest $l$-weights} \label{subsec:g vectors and highest l weights}

By results in \cite[Section 5.2.2]{HL16}, see also \cite[Section 2.6]{DR20}, \cite[Section 7]{CDFL}, we have that for any simple $U_q(\widehat{\mathfrak{g}})$-module 
$L(M)$, its $g$-vector is obtained as follows. The dominant monomial $M$ can be written as $M = \prod_{i,s} Y_{i,s}^{a_{i,s}}$ for some non-negative integers $a_{i,s}$, where the product runs over all fundamental modules $L(Y_{i,s})$ in $\mathcal{C}_{\ell}$. On the other hand, $M$ can also be written as $M = \prod_{j} M_j^{g_j}$ for some integers $g_j$, where the product runs over all initial cluster variables and frozen variables $L(M_j)$ in $\mathcal{C}_{\ell}$. The $g_j$'s are the unique solution of $\prod_{i,s} Y_{i,s}^{a_{i,s}} = \prod_{j} M_j^{g_j}$. With a chosen order, $g_j$'s form the $g$-vector of $L(M)$. A simple module $L(M)$ is determined by its $g$-vector uniquely. 

\begin{remark}
In this paper, every element in the dual canonical basis of $K_0(\mathcal{C}_{\ell})$ and $\CC[\Gr(k,n)]$ has a $g$-vector in the above sense even if it is not a cluster monomial. 
\end{remark}

Given a simple module $L(M)$, let $g_M$ be the vector obtained from the $g$-vector of $L(M)$ by forgetting the entries corresponding to the frozens. We also call $g_M$ the $g$-vector $L(M)$. 

Fix an order of the initial cluster variables (not including frozens), say $z_1, \ldots, z_m$. Given any vector $g 
\in \ZZ^{m}$, the monomial $M'=z_1^{g_1} \cdots z_m^{g_m}$ can be written as $M' = AB^{-1}$ for two dominant monomials $A, B$. The monomial $B$ is of the form $B = \prod_{i \in I} Y_{i, s_1}^{u_{i,1}} \cdots Y_{i, s_{r_i}}^{u_{i,r_i}}$ for some positive integers $r_i$, $u_{i,j}$. Let $B' = \prod_{i \in I} (Y_{i, \xi(i)} \cdots Y_{i, \xi(i)+2 \ell})^{ \max( u_{i, j}: j = 1, \ldots, r_i ) }$. Then $M' B'$ is a dominant monomial and $M'B'$ cannot be written as a product of a dominant monomial and a frozen variable. We denote $M_g = M'B'$ and say that $L(M_g)$ corresponds to the vector $g$. 

\subsection{From facets to prime modules} \label{subsec:from facets to prime modules general quantum affine algebra}

Fix an order of the initial cluster variables. Every $v_{i,s}$ is the X-variable at a vertex of the initial quiver of $K_0(\mathcal{C}_{\ell})$ (see Lemma 4.15 in \cite{HL16}). We order the variables $v_{i,s}$ according to the order of initial cluster variables. Given a facet ${\bf F}$ of ${\bf N}^{(d)}_{\mathfrak{g}, \ell}$ with the outward normal vector $v_{\bf F}$ of ${\bf F}$, let $L(M_{\bf F})$ be the simple $U_q(\widehat{\mathfrak{g}})$-module corresponding to $v_{\bf F}$, see Section \ref{subsec:g vectors and highest l weights}.

Recall that two simple $U_q(\widehat{\mathfrak{g}})$-modules $L(M)$, $L(M')$ are called compatible if the identity $\chi_q(L(M))\chi_q(L(M')) = \chi_q(L(MM'))$ holds. 
\begin{conjecture} \label{conj:newton polytope and prime modules conjecture quantum affine algebra case}

Let $\mathfrak{g}$ be a simple Lie algebra over $\CC$ and $\ell \ge 1$. We have the following.
\begin{enumerate}

\item For any $d \ge 0$, every facet of ${\bf N}^{(d)}_{\mathfrak{g}, \ell}$ corresponds to a prime $U_q(\widehat{\mathfrak{g}})$-module in $\mathcal{C}_{\ell}$.

\item For every prime $U_q(\widehat{\mathfrak{g}})$-module (nonfrozen) $L(M)$ in $\mathcal{C}_{\ell}$, there exists $d\ge 0$ such that $L(M)$ corresponds to a facet of the polytope ${\bf N}^{(d)}_{\mathfrak{g}, \ell}$. 

\item For any two distinct prime modules in $\mathcal{C}_{\ell}$, they are compatible if and only if there is some $d \ge 0$ such that there are two facets of ${\bf N}_{\mathfrak{g}, \ell}^{(d)}$ corresponding to them and the intersection of these two facets is nonempty.
\end{enumerate}
\end{conjecture}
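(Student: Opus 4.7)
The plan is to exploit the interplay between the multiplicative structure of $q$-characters, the additive structure of $g$-vectors, and the geometry of Newton polytopes to attack the three parts of the conjecture in sequence. The basic technical input is that by the Frenkel-Mukhin result cited as Theorem 4.1 in \cite{FM00}, the truncated character $\tchi(L(M))/M$ is a polynomial in the variables $v_{i,s}=A_{i,s}^{-1}$ with constant term $1$, so that $M$ itself contributes a distinguished vertex of the Newton polytope. Moreover, whenever $L(M)$ is a real simple module the Hernandez-Leclerc identification $\tchi(L(M))=M\cdot F_{K(M)}$ from \cite{HL16} makes $\mathrm{Newt}(\tchi(L(M))/M)$ coincide with the Newton polytope of an F-polynomial, whose facets admit a quiver-with-potential description.

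For part (1), I would argue by contradiction. Suppose a facet $\mathbf{F}$ of $\mathbf{N}^{(d)}_{\mathfrak{g},\ell}$ has outward normal $v_{\mathbf{F}}$ and the associated module $L(M_{\mathbf{F}})$ factors as $L(M_1)\otimes L(M_2)$ with both factors nontrivial. When the tensor product is simple the $q$-character is multiplicative and $g$-vectors are additive, yielding the Minkowski decomposition
\begin{equation*}
\mathrm{Newt}(\tchi(L(M_{\mathbf{F}}))/M_{\mathbf{F}}) \;=\; \mathrm{Newt}(\tchi(L(M_1))/M_1) + \mathrm{Newt}(\tchi(L(M_2))/M_2).
\end{equation*}
The strategy is then to show that the $g$-vectors of $L(M_1)$ and $L(M_2)$ are themselves outward normals of genuine facets at the same (or an earlier) stage of the recursion, forcing $v_{\mathbf{F}}$ into the relative interior of the two-dimensional cone they span in the normal fan, contradicting extremality of rays of the normal fan.

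For part (2), I would proceed by induction on a complexity measure of the dominant monomial (for example the total degree in the generators $Y_{i,s}$), showing that the $g$-vector direction of a prime module cannot be written as a positive combination of $g$-vector directions of compatible simples of strictly smaller complexity, so that it must eventually appear as an extremal ray of some $\mathcal{N}(\mathbf{N}^{(d)}_{\mathfrak{g},\ell})$. For part (3), the forward implication should come directly from the Minkowski-sum behaviour of Newton polytopes under tensoring of compatible simples, while the converse requires arguing that a common face of two facets produces a dominant monomial whose module splits as the tensor product of the two corresponding primes.

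The hard part will be part (2): the recursion is difficult to control because a priori the sequence $\mathbf{N}^{(d)}_{\mathfrak{g},\ell}$ could grow or stabilize in unpredictable ways, and one must handle prime non-real modules whose $q$-characters are not governed by the F-polynomial formalism of \cite{HL16}. One would need either an a priori bound on the depth $d$ at which a given prime module appears --- plausibly tied to the limit $g$-vector construction developed later in the paper --- or an explicit direct construction of the facet. A secondary technical obstacle common to parts (1) and (3) is establishing the $g$-vector additivity and $q$-character multiplicativity for compatible pairs uniformly beyond the real case, since the cluster-algebraic tools apply cleanly only to real simples.
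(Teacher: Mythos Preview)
The statement you are attempting to prove is labelled a \emph{conjecture} in the paper and is not proved there; the paper only verifies special cases (e.g.\ $\mathbf{N}^{(1)}_{3,9}$, $\mathbf{N}^{(1)}_{4,8}$, and some low-rank examples in Section~\ref{sec:Newton polytopes for quantum affine algebras}) by direct computation. So there is no ``paper's own proof'' to compare against, and your task is to produce a genuine argument from scratch.

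Your proposal is a strategy outline rather than a proof, and it contains a structural gap that would block part~(1) even at the level of sketch. Your contradiction argument assumes that if $L(M_{\mathbf F})\cong L(M_1)\otimes L(M_2)$, then the $g$-vectors $g(M_1),g(M_2)$ are themselves rays of the normal fan $\mathcal N(\mathbf N^{(d)}_{\mathfrak g,\ell})$, so that $v_{\mathbf F}=g(M_1)+g(M_2)$ lies in the relative interior of a two-dimensional cone. But knowing that $g(M_i)$ is a ray of the normal fan is exactly part~(2) of the conjecture (applied to the prime factors of $L(M_i)$), so the argument is circular. Moreover, the Minkowski decomposition you display concerns only $\mathrm{Newt}(\tchi(L(M_{\mathbf F}))/M_{\mathbf F})$, a single Minkowski summand of the large polytope $\mathbf N^{(d)}_{\mathfrak g,\ell}$; nothing you wrote connects that individual summand's geometry to the facet structure of the full Minkowski sum, which is what governs $v_{\mathbf F}$. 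A facet normal of a Minkowski sum need not be a facet normal of any summand, and a non-extremal direction for one summand can still be extremal for the sum.

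For part~(2) you correctly identify the difficulty and then essentially stop; the induction you propose has no base case or inductive step spelled out, and the limit $g$-vector machinery of Section~\ref{sec:limit g vector} gives a way to \emph{produce} certain prime non-real modules, not a bound on the recursion depth~$d$. For part~(3), the ``forward implication'' via Minkowski sums again needs the same missing link between individual Newton polytopes and facets of the global polytope, and the converse would require knowing that the normal fan is a coarsening of the $g$-vector fan of the cluster algebra, which is itself open in infinite type. In short, the proposal identifies the right objects but does not supply the key combinatorial-geometric lemma (relating rays of $\mathcal N(\mathbf N^{(d)}_{\mathfrak g,\ell})$ to $g$-vectors of primes in a non-circular way) on which everything else depends.
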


In the following subsections, we compute some examples of ${\bf N}^{(d)}_{\mathfrak{g}, \ell}$ and ${\bf N'}^{(d)}_{\mathfrak{g}, \ell}$.  

\subsection{Example: $\mathfrak{g}$ is of type $A_1$ and $\ell=2$}

Consider the case of type $A_1$ and $\ell=2$. The category $\mathcal{C}_{\ell}$ has $5$ prime modules (not including frozen variables). We choose a height function $\xi(1)=-1$, see Section \ref{subsec:HL category and cluster algebra}.

The truncated $q$-characters of Kirillov-Reshetikhin modules (not including frozen variables) in $\mathcal{C}_{\ell}$ are
\begin{align*}
& \tchi(L(Y_{1,-1})) = Y_{1,-1}, \quad \tchi(L(Y_{1,-3})) = Y_{1,-3}+Y_{1, -1}^{-1} = Y_{1,-3}(1+v_{1,-2}), \\
& \tchi(L(Y_{1,-5})) = Y_{1,-5}+Y_{1, -3}^{-1} = Y_{1,-3}Y_{1,-5}(1+v_{1,-4}), \quad \tchi(L(Y_{1,-1}Y_{1,-3})) = Y_{1,-1}Y_{1,-3}, \\
& \tchi(L(Y_{1,-3}Y_{1,-5})) = Y_{1,-3} Y_{1,-5} + \frac{Y_{1,-5}}{Y_{1,-1}} + \frac{1}{Y_{1,-1} Y_{1,-3}} =  Y_{1,-3}Y_{1,-5}(1+v_{1,-2}+v_{1,-4}v_{1,-2}). 
\end{align*}

We take the order of the initial cluster variables as $L(Y_{1,-1})$, $L(Y_{1,-3}Y_{1,-1})$. The corresponding order of variables $v_{i,s}$ is $v_{1,-2}$, $v_{1,-4}$. The Newton polytope ${\bf N}_{\mathfrak{sl}_2, 2}^{(0)}$ is given by the following half-spaces:
\begin{align} 
\begin{split}
(-1, 0) \cdot x + 2 \ge 0, \ (1, -1) \cdot x + 1 \ge 0, \  (1, 0) \cdot x + 0 \ge 0, \ (0, 1) \cdot x + 0 \ge 0, \ (0, -1) \cdot x + 2 \ge 0. 
\end{split}
\end{align}
The outward normal vectors  
\begin{align*}
(1, 0), \ (-1, 1), \ (-1, 0), \ (0, -1), \ (0, 1)
\end{align*}
of these facets are exactly the $g$-vectors of prime modules in $\mathcal{C}_{2}^{\mathfrak{sl}_2}$. These facets correspond to the following prime modules respectively:
\begin{align*}
L(Y_{1,-1}), \ L(Y_{1,-3}), \ L(Y_{1,-3}Y_{1,-5}), \ L(Y_{1,-5}), \ L(Y_{1,-1}Y_{1,-3}). 
\end{align*}

The Newton polytope ${\bf N}_{\mathfrak{sl}_2, 2}^{(d)}$ also has $5$ facets for any $d \ge 1$. 

\subsection{Example: $\mathfrak{g}$ is of type $A_2$, $\ell=2$}

In the case of type $A_2$, $\ell=2$, we choose the height function $\xi(1)=-1$, $\xi(2)=0$. There are $16$ prime modules (not including the two frozens) in the category $\mathcal{C}_{\ell}$. We have the following truncated $q$-characters of Kirillov-Reshetikhin modules (not including initial cluster variables and frozen variables) in $\mathcal{C}_{\ell}$:
\begin{align*}
& \tchi(L(Y_{1,-3})) = \frac{1}{Y_{2,0}} + \frac{Y_{2,-2}}{Y_{1,-1}} + Y_{1,-3} = Y_{1,-3}( 1 + v_{1,-2} + v_{1,-2} v_{2,-1} ), 
\end{align*}
\begin{align*}
& \tchi(L(Y_{1,-5})) = \frac{1}{Y_{2,-2}} + \frac{Y_{2,-4}}{Y_{1,-3}} + Y_{1,-5} = Y_{1,-5}(1 + v_{1,-4} + v_{1,-4}v_{2,-3}),   
\end{align*}
\begin{align*}
& \tchi(L(Y_{2,-2})) = \frac{Y_{1,-1}}{Y_{2,0}} + Y_{2,-2} = Y_{2,-2}(1+v_{2,-1}), 
\end{align*}
\begin{align*}
& \tchi(L(Y_{2,-4})) = \frac{1}{Y_{1,-1}} + \frac{Y_{1,-3}}{Y_{2,-2}} + Y_{2,-4} = Y_{2,-4}(1 + v_{2,-3} + v_{2,-3}v_{1,-2}), 
\end{align*}
\begin{align*}
& \tchi(L(Y_{1,-3} Y_{1,-5})) 
= Y_{1,-3} Y_{1,-5} + \frac{Y_{1,-5}}{Y_{2,0}} + \frac{1}{Y_{2,0} Y_{2,-2}} + \frac{Y_{2,-4}}{Y_{2,0} Y_{1,-3}} + \frac{Y_{2,-2} Y_{1,-5}}{Y_{1,-1}} + \frac{Y_{2,-2} Y_{2,-4}}{Y_{1,-1} Y_{1,-3}} \\
& =  Y_{1,-3} Y_{1,-5}( 1+v_{1,-2}+v_{1,-2} v_{1,-4}+v_{1,-2} v_{2,-1}+v_{1,-2} v_{2,-1} v_{1,-4}+v_{1,-2} v_{2,-1} v_{1,-4} v_{2,-3}), 
\end{align*}
\begin{align*}
&  \tchi(L(Y_{2,-2}Y_{2,-4})) = Y_{2,-2} Y_{2,-4} + \frac{Y_{1,-1} Y_{2,-4}}{Y_{2,0}} + \frac{Y_{1,-1} Y_{1,-3}}{Y_{2,0} Y_{2,-2}}  = Y_{2,-2} Y_{2,-4}( 1 + v_{2,-1} v_{2,-3} + v_{2,-1} ), 
\end{align*}

We take the order of the initial cluster variables as $L(Y_{1,-1})$, $L(Y_{1,-3}Y_{1,-1})$, $L(Y_{2,0})$, $L(Y_{2,-2}Y_{2,0})$. The corresponding order of the variables $v_{i,s}$ is $v_{1,-2}$, $v_{1,-4}$, $v_{2,-1}$, $v_{2,-3}$. The Newton polytope ${\bf N}_{\mathfrak{sl}_3, 2}^{(0)}$ is given by the following half-spaces:
\begin{align*}
& (-1, 0, 0, 0) \cdot x + 3 \ge 0, \  (0, -1, 0, 0) \cdot x + 2 \ge 0, \   (0, 0, -1, 0) \cdot x + 4 \ge 0,   \\
& (0, 1, 0, -1) \cdot x + 2 \ge 0, \   (0, 0, 1, -1) \cdot x + 2 \ge 0, \   (0, 0, 1, 0) \cdot x + 0 \ge 0,  \\
&  (0, 0, 0, 1) \cdot x + 0 \ge 0, \   (1, -1, 0, 0) \cdot x + 1 \ge 0, \   (1, 0, 0, 0) \cdot x + 0 \ge 0,  \\
& (1, 0, -1, 0) \cdot x + 2 \ge 0, \   (0, 1, 0, 0) \cdot x + 0 \ge 0, \   (-1, 0, 0, 1) \cdot x + 2 \ge 0, \   (0, 1, 1, -1) \cdot x + 1 \ge 0.
\end{align*}
The outward normal vectors of these facets correspond to the following prime modules respectively:  
\begin{align*}
& L( Y_{1,-1} ), \ L(Y_{1,-1}Y_{1,-3}), \ L(Y_{2,0}), \ L(Y_{1,-5}Y_{2,-2}Y_{2,0}), \ L(Y_{2,-2}), \ L(Y_{2,-4}Y_{2,-2}), \ L(Y_{2,-4}), \\ 
& L(Y_{1,-3}), \ L(Y_{1,-5}Y_{1,-3}), \ L(Y_{1,-5}Y_{1,-3}Y_{2,0}), \ L(Y_{1,-5}), \ L(Y_{2,-4}Y_{1,-1}), \ L(Y_{1,-5}Y_{2,-2}). 
\end{align*}

We have the following truncated $q$-characters:
\begin{align*}
& \tchi(L(Y_{2,-2}Y_{1,-5})) = Y_{2,-2} Y_{1,-5} + \frac{Y_{1,-1}}{Y_{2,0} Y_{2,-2}} + \frac{Y_{1,-1} Y_{1,-5}}{Y_{2,0}} + \frac{Y_{2,-2} Y_{2,-4}}{Y_{1,-3}} + \frac{Y_{1,-1} Y_{2,-4}}{Y_{2,0} Y_{1,-3}} \\
& = Y_{2,-2} Y_{1,-5}( v_{1, -4} v_{2, -3} v_{2, -1} + v_{1, -4} v_{2, -1} + v_{1, -4} + v_{2, -1} + 1 ). 
\end{align*}

\begin{align*}
& \tchi(L(Y_{1,-1} Y_{2,-4})) = Y_{1,-1} Y_{2,-4} + \frac{Y_{1,-1} Y_{1,-3}}{Y_{2,-2}} = Y_{1,-1} Y_{2,-4}(1 + v_{2,-3}),
\end{align*}

\begin{align*}
& \tchi(L(Y_{2,0} Y_{2,-2} Y_{1,-5})) = Y_{2,0} Y_{2,-2} Y_{1,-5} + \frac{Y_{2,0} Y_{2,-2} Y_{2,-4}}{Y_{1,-3}} = Y_{2,0} Y_{2,-2} Y_{1,-5} (v_{1,-4} + 1), 
\end{align*}

\begin{align*}
& \tchi(L(Y_{2,0} Y_{1,-3} Y_{1,-5})) = Y_{2,0} Y_{1,-3} Y_{1,-5} + \frac{Y_{2,0} Y_{2,-2} Y_{1,-5}}{Y_{1,-1}} + \frac{Y_{2,0} Y_{2,-2} Y_{2,-4}}{Y_{1,-1} Y_{1,-3}} \\
& = Y_{2,0} Y_{1,-3} Y_{1,-5}(v_{1,-2} v_{1,-4} + v_{1,-2} + 1 ).
\end{align*}
The Newton polytope ${\bf N}_{\mathfrak{sl}_3, 2}^{(1)}$ is given by the following half-spaces:
\begin{align*}
& (-1, 0, 0, 0) \cdot x + 4 \ge 0,
(0, -1, 0, 0) \cdot x + 5 \ge 0,
(0, 0, -1, 0) \cdot x + 5 \ge 0,
(0, 0, 0, -1) \cdot x + 6 \ge 0, \\
& (0, 0, 1, 0) \cdot x + 0 \ge 0,
(0, 0, 1, -1) \cdot x + 3 \ge 0,
(0, 1, 0, 0) \cdot x + 0 \ge 0,
(1, -1, 0, 0) \cdot x + 3 \ge 0,\\
& 
(1, 0, -1, 0) \cdot x + 3 \ge 0,
(1, 0, 0, 0) \cdot x + 0 \ge 0,
(1, 0, 0, -1) \cdot x + 5 \ge 0,
(0, 0, 0, 1) \cdot x + 0 \ge 0,\\
& 
(0, 1, 0, -1) \cdot x + 3 \ge 0,
(0, 1, 1, -1) \cdot x + 2 \ge 0,
(1, -1, -1, 0) \cdot x + 7 \ge 0,
(-1, 0, 0, 1) \cdot x + 3 \ge 0.
\end{align*}
The outward normal vectors of these facets are exactly the $g$-vectors of prime modules (not including frozens) in $\mathcal{C}^{\mathfrak{sl}_3}_{2}$. 

We have the following truncated $q$-character:
\begin{align*}
& \tchi(L(Y_{1,-3}Y_{2,0})) = Y_{2,0} Y_{1,-3} + \frac{Y_{2,0} Y_{2,-2}}{Y_{1,-1}} = Y_{2,0} Y_{1,-3}(v_{1,-2} + 1),
\end{align*} 
\begin{align*}
& \scalemath{0.86}{ \tchi(L(Y_{2,0} Y_{1,-3} Y_{2,-2} Y_{1,-5})) = Y_{2,-2} Y_{1,-5} + \frac{Y_{2,-2} Y_{2,-4}}{Y_{1,-3}} + Y_{2,0} Y_{1,-3} Y_{2,-2} Y_{1,-5} + \frac{Y_{2,0} {Y_{2,-2}}^2 Y_{1,-5}}{Y_{1,-1}} + \frac{Y_{2,0} {Y_{2,-2}}^2 Y_{2,-4}}{Y_{1,-1} Y_{1,-3}} } \\
& = Y_{2,0} Y_{1,-3} Y_{2,-2} Y_{1,-5}( v_{1,-2} v_{2,-1} + v_{1,-2} v_{1,-4} + v_{1,-2} + v_{1,-2} v_{2,-1} v_{1,-4} + 1 ).
\end{align*}
The Newton polytope ${\bf N}_{\mathfrak{sl}_3, 2}^{(d)}$ has $16$ facets for any $d \ge 1$. On the other hand, there are $16$ prime modules (not including the two frozens) in $\mathcal{C}^{\mathfrak{sl}_3}_{2}$. Therefore there is a one to one correspondence between facets of ${\bf N}_{k,n}^{(d)}$ ($d \ge 2$) and prime modules in $\mathcal{C}^{\mathfrak{sl}_3}_{2}$.

\subsection{Example: $\mathfrak{g}$ is of type $B_n$ and $\ell=1$}

Consider the case of type $B_2$ and $\ell=1$. We choose a height function as shown in Figure \ref{fig:initial cluster B2 and l is 1}, see \cite{HL10, HL16} for the definition of the cluster algebra associated to $\mathcal{C}_{\ell}$.  

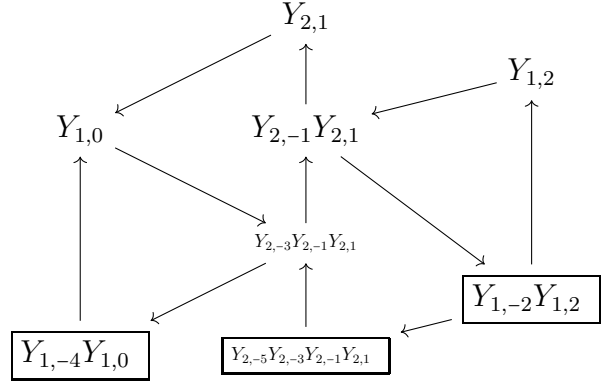
\begin{figure} 
\begin{tikzpicture}[scale=1.5]
		\node  (1) at (0, 0) {$Y_{1,0}$ };
		\node  (2) at (0, -2) {\fbox{$Y_{1,-4}Y_{1,0}$ }}; 
		\node  (3) at (2,1) {$Y_{2,1}$ }; 
		\node  (4) at (2,0) {$Y_{2,-1}Y_{2,1}$ };
		\node  (5) at (2,-1) {$ \scalemath{0.6}{ Y_{2,-3}Y_{2,-1}Y_{2,1} }$ };
		\node  (6) at (4,0.5) {$Y_{1,2}$ };
		\node  (7) at (4,-1.5) {\fbox{$Y_{1,-2}Y_{1,2}$ }};
            \node  (8) at (2,-2) {\fbox{$ \scalemath{0.6}{  Y_{2,-5}Y_{2,-3}Y_{2,-1}Y_{2,1} }$ }};
     
     \draw[->] (2)--(1); 
     \draw[->] (4)--(3);
      \draw[->] (5)--(4); 
     \draw[->] (7)--(6);
     \draw[->] (3)--(1);
     \draw[->] (1)--(5); 
     \draw[->] (6)--(4);
     \draw[->] (4)--(7); 
     \draw[->] (8)--(5); 
     \draw[->] (7)--(8); 
     \draw[->] (5)--(2); 
   
\end{tikzpicture} 
\caption{An initial cluster for $K_0(\mathcal{C}_{\ell}^{B_2})$, $\ell=1$.}
\label{fig:initial cluster B2 and l is 1}
\end{figure} 

There are $25$ prime modules (not including the $3$ frozen modules) in $\mathcal{C}_{1}^{B_2}$ are 
\begin{align*}
& L(Y_{1,2}), \ L(Y_{1,0}), \ L(Y_{1,-2}), \ L(Y_{1,-4}), \ L(Y_{2,1}), \ L(Y_{2,-1}), \ L(Y_{2,-3}), \ L(Y_{2,-5}), \\
& L(Y_{1, -4} Y_{2, 1}), \
L(Y_{1, -4} Y_{1, 2}), \ 
L(Y_{1, 0} Y_{2, -5}), \ 
L(Y_{1, 2} Y_{2, -3}), \ 
L(Y_{2, -5} Y_{2, 1}), \
L(Y_{2, -5} Y_{2, -3}), \\
& L(Y_{2, -3} Y_{2, -1}), \
L(Y_{2, -1} Y_{2, 1}), \ 
L(Y_{1, -4} Y_{2, -1} Y_{2, 1}), \ 
L(Y_{1, 0} Y_{2, -5} Y_{2, -3}), \ 
L(Y_{1, 2} Y_{2, -5} Y_{2, -3}), \\
& 
L(Y_{1, 2} Y_{2, -3} Y_{2, -1}), \
L(Y_{2, -5} Y_{2, -3} Y_{2, -1}), \ 
L(Y_{2, -3} Y_{2, -1} Y_{2, 1}), \ 
L(Y_{1, 0} Y_{1, 2} Y_{2, -5} Y_{2, -3}), \\
& 
L(Y_{1, 2} Y_{2, -5} Y_{2, -3} Y_{2, -1}), \
L(Y_{1, 2} Y_{2, -5} Y_{2, -3} Y_{2, -3} Y_{2, -1}).
\end{align*} 

We have the following truncated $q$-characters of prime modules (not including initial cluster variables and frozen variables) in $\mathcal{C}_{\ell}$:

\begin{align*}
& \tchi(L(Y_{2,-1}))=Y_{2,-1}(v_{2,0} + 1), \quad \tchi(L(Y_{1,-2}))=Y_{1,-2}(v_{1,0} + 1),
\end{align*}

\begin{align*}
& \tchi(L(Y_{1,2} Y_{2,-3}))=Y_{1,2} Y_{2,-3}(v_{2,-2} + 1), \quad \tchi(L(Y_{1,0} Y_{2,-5}))=Y_{1,0} Y_{2,-5}(v_{2,-4} + 1), 
\end{align*}

\begin{align*}
& \tchi(L(Y_{2,1} Y_{1,-4}))=Y_{2,1} Y_{1,-4}(v_{1,-2} + 1), \quad \tchi(L(Y_{2,-3}))=Y_{2,-3}(v_{1,0} v_{2,-2} + v_{2,-2} + 1),
\end{align*}

\begin{align*}
& \tchi(L(Y_{2,1} Y_{2,-5}))=Y_{2,1} Y_{2,-5}(v_{1,-2} v_{2,-4} + v_{2,-4} + 1), \\
& \tchi(L(Y_{2,-5}))=Y_{2,-5}(v_{1,-2} v_{2,-4} + v_{2,-4} + v_{2,0} v_{1,-2} v_{2,-4} + 1),
\end{align*}

\begin{align*}
& \tchi(L(Y_{2,-1} Y_{2,1} Y_{1,-4}))=Y_{2,-1} Y_{2,1} Y_{1,-4}(v_{2,0} v_{1,-2} + v_{1,-2} + 1), \\
& \tchi(L(Y_{1,2} Y_{2,-1} Y_{2,-3}))=Y_{1,2} Y_{2,-1} Y_{2,-3}(v_{2,0} v_{2,-2} + v_{2,0} + 1),
\end{align*}

\begin{align*}
& \tchi(L(Y_{1,2} Y_{1,-4}))=Y_{1,2} Y_{1,-4}(v_{2,0} v_{1,-2} + v_{1,-2} + v_{2,0} v_{1,-2} v_{2,-2} + 1), \\
& \tchi(L(Y_{2,-1} Y_{2,-3}))=Y_{2,-1} Y_{2,-3}(v_{2,0} v_{2,-2} + v_{2,0} + v_{1,0} v_{2,0} v_{2,-2} + 1),
\end{align*}

\begin{align*}
& \tchi(L(Y_{1,0} Y_{1,2} Y_{2,-3} Y_{2,-5}))=Y_{1,0} Y_{1,2} Y_{2,-3} Y_{2,-5}(v_{2,-2} v_{2,-4} + v_{2,-2} + 1), \\
& \tchi(L(Y_{1,2} Y_{2,-3} Y_{2,-5}))=Y_{1,2} Y_{2,-3} Y_{2,-5}(v_{2,-2} v_{2,-4} + v_{2,-2} + v_{1,-2} v_{2,-2} v_{2,-4} + 1),
\end{align*}

\begin{align*}
& \tchi(L(Y_{1,-4}))=Y_{1,-4}(v_{2,0} v_{1,-2} + v_{1,-2} + v_{2,0} v_{1,-2} v_{2,-2} + v_{1,0} v_{2,0} v_{1,-2} v_{2,-2} + 1), \\
& \tchi(L(Y_{1,2} Y_{2,-1} Y_{2,-3} Y_{2,-5}))=Y_{1,2} Y_{2,-1} Y_{2,-3} Y_{2,-5}(v_{2,0} v_{2,-2} + v_{2,0} + v_{2,0} v_{2,-2} v_{2,-4} + 1),
\end{align*}

\begin{align*}
& \tchi(L(Y_{1,0} Y_{2,-3} Y_{2,-5}))=Y_{1,0} Y_{2,-3} Y_{2,-5}(v_{1,0} v_{2,-2} + v_{2,-2} v_{2,-4} + v_{2,-2} + v_{1,0} v_{2,-2} v_{2,-4} + 1), 
\end{align*}

\begin{align*}
\tchi(L(Y_{2,-1} Y_{2,-3} Y_{2,-5})) =  & Y_{2,-1} Y_{2,-3} Y_{2,-5}(v_{2,0} v_{2,-2} + v_{2,0}  \\
& + v_{1,0} v_{2,0} v_{2,-2} + v_{2,0} v_{2,-2} v_{2,-4} + v_{1,0} v_{2,0} v_{2,-2} v_{2,-4} + 1),
\end{align*}

\begin{align*}
\tchi(L(Y_{2,-3} Y_{2,-5}))= & Y_{2,-3} Y_{2,-5}(v_{1,0} v_{2,-2} + v_{2,-2} v_{2,-4} + v_{2,-2} + v_{1,0} v_{2,-2} v_{2,-4} \\ 
& + v_{1,-2} v_{2,-2} v_{2,-4} + v_{1,0} v_{1,-2} v_{2,-2} v_{2,-4} + 1),
\end{align*}

\begin{align*}
& \tchi(L(Y_{1,2} Y_{2,-1} {Y_{2,-3}}^2 Y_{2,-5}))= Y_{1,2} Y_{2,-1} {Y_{2,-3}}^2 Y_{2,-5} (2 v_{2,0} v_{2,-2} + v_{2,0} {v_{2,-2}}^2 + v_{1,0} v_{2,0} {v_{2,-2}}^2 \\
& \qquad + v_{2,0} {v_{2,-2}}^2 v_{2,-4}  + v_{2,0} + v_{2,-2} + v_{1,0} v_{2,0} v_{2,-2} + v_{2,0} v_{2,-2} v_{2,-4} + v_{1,0} v_{2,0} {v_{2,-2}}^2 v_{2,-4} + 1).
\end{align*}

By using the above F-polynomials, we find that ${\bf N'}_{B_n, 1}^{(5)}$ has $25$ facets. The polytope ${\bf N'}_{B_n, 1}^{(d)}$ ($d \ge 5$) also has $25$ facets and it is the type $D_5$ associahedron. 

We expect that for every $n \in \ZZ_{\ge 2}$, ${\bf N'}_{B_n, 1}^{(d)}$ ($d$ is large enough) is the type $D_{2n+1}$ associahedron. 

\subsection{Tropical Fans for Quantum Affine Algebras}

Let $\mathcal{M}^{(0)}=\mathcal{M}$ be the set of all equivalence classes of Kirillov-Reshetikhin modules of $U_q(\widehat{\mathfrak{g}})$ in $\mathcal{C}_{\ell}$. By tropicalizing all $\tchi(L(M))/M$, $L(M) \in \mathcal{M}$, we obtain piecewise linear functions in the space of dimension $r-m$ parametrized by $y_{i,j}$ ($y_{i,j}$ is the tropical version of $v_{i,j}$), where $r$ is the number of fundamental modules and $m$ is the number of frozen variables in $\mathcal{C}_{\ell}$. Such a function is linear on a collection of cones; these cones assemble to define a polyhedral fan. The common refinement of these fans is the normal fan $\mathcal{N}({\bf N}_{\mathfrak{g}, \ell}^{(0)})$ of the Newton polytope ${\bf N}_{\mathfrak{g}, \ell}^{(0)}$. 

For $d\ge 1$, let $\mathcal{M}^{(d)}$ be the set of all equivalence classes of simple modules corresponding to rays of $\mathcal{N}({\bf N}_{\mathfrak{g}, \ell}^{(d-1)})$. By tropicalizing all $\chi_q(L(M))$, $L(M) \in \mathcal{M}^{(d)}$, and using the same procedure as above, we obtain the normal fan $\mathcal{N}({\bf N}_{\mathfrak{g}, \ell}^{(d)})$ of the Newton polytope ${\bf N}_{\mathfrak{g}, \ell}^{(d)}$ defined in Section \ref{subsec:Newton polytopes for quantum affine algebras}.

\section{Physical Motivation: Stringy Integrals and CEGM Scattering Amplitudes} \label{sec:physics}

In this section, we propose a formula which extends the main construction in the work of Arkani-Hamed, He, Lam \cite{AHL2019Stringy} on so-called Grassmannian string integrals, and Cachazo, Early, Guevara, Mizera (CEGM) \cite{CEGM2019} on generalized biadjoint scalar amplitudes. Grassmannian string integrals and generalized biadjoint scalar amplitudes are related by taking a certain $\alpha' \rightarrow 0$ limit of the stringy integral.

\subsection{Stringy Integrals For Grassmannian Cluster Algebras} \label{subsec:stringy integeral for Grkn}

From a physical point of view, the central objective of this subsection which we describe here is twofold. First, in this subsection we give an explicit formula for a completion of the stringy integral by making use of \textit{all} of the elements in Lusztig's dual canonical basis of $\CC[\Gr(k,n)]$; the rest of the subsection aims to provide a combinatorial framework for the evaluation of a limit which is standard in physics, the so-called $\alpha' \rightarrow 0$ limit of the Grassmannian string integral, which is known to be given (\cite[Claim 1]{AHL2019Stringy}) by the CEGM scattering equations formula.

Such calculations are still highly nontrivial, but the formula which we propose removes an enormous amount of redundancy by making use of character polynomials for only \textit{prime} tableaux. It is known that any simple 
$U_q(\widehat{\mathfrak{g}})$-module decomposes as the tensor product of prime simple modules \cite{CP94}. Therefore the $q$-character\footnote{There is a connection between the $q$-character of a simple module $L(M)$ and the polynomial $\ch_{T_M}$, where $T_M$ is the tableau corresponding to $M$, see Sections \ref{subsec:Grassmannian cluster algebras and semistandard Young tableaux} and \ref{subsec:dominant monomials and tableaux}.} of any simple $U_q(\widehat{\mathfrak{g}})$-module is the product of the $q$-characters of its prime factors \cite{FR98}.

Moreover, our formula is essentially nonrecursive using $\ch_T$ in Theorem 5.8 in \cite{CDFL}, and it is more general than possible constructions coming from cluster algebras which use only cluster variables.  

Arkani-Hamed, He, and Lam introduced Grassmannian string integrals in \cite[Equation (6.11)]{AHL2019Stringy}:
\begin{eqnarray}\label{eq: stringy integral original plucker case}
	\mathbf{I}_{k,n} & =&  (\alpha')^{a}\int_{\left(\mathbb{R}_{>0}^{n-k-1}\right)^{\times (k-1)}}\left(\prod_{(i,j)}\frac{dx_{i,j}}{x_{i,j}}\right)\left(\prod_{J} p_J^{-\alpha'c_J}(x_{i,j})\right),
\end{eqnarray}
where $a = (k-1)(n-k-1)$, $\alpha'$, $c_J$ are some parameters, $p_J$'s are Pl\"{u}cker coordinates, the product runs over all $k$-element subsets of $[n]$.  

We emphasize that the original formulations (\ref{eq: stringy integral original plucker case}) in \cite{CEGM2019} and \cite{AHL2019Stringy} involved only the finite collection of all Pl\"{u}cker coordinates.  

We now define the completion of the Grassmannian string integral, using for the integrand all prime elements in the dual canonical basis of $\CC[\Gr(k,n)]$.  

\begin{definition} \label{def:Grassmannian string integral, finite d}
For $2 \le k \le n-2$ and every $d\ge 1$, we define
\begin{eqnarray}\label{eq: stringy integral finite d}
	\mathbf{I}^{(d)}_{k,n} & =&  (\alpha')^{a}\int_{\left(\mathbb{R}_{>0}^{n-k-1}\right)^{\times (k-1)}}\left(\prod_{(i,j)}\frac{dx_{i,j}}{x_{i,j}}\right)\left(\prod_{T} \ch_T^{-\alpha'c_T}(x_{i,j})\right).
\end{eqnarray}
where the second product is over all tableaux $T$ such that the face $\mathbf{F}_T$ corresponding to $T$ (see Section \ref{subsec: the face corresponding to T}) is a (codimension one) facet of $\mathbf{N}^{(d-1)}_{k,n}$.
Here we abbreviate $a=(k-1)(n-k-1)$.  Also $c_T$, $x_{i,j}> 0$ are positive (real) parameters, and  $\alpha'$ is a parameter known in physics as the string tension.  The first product is over $(i,j) \in \lbrack 1,k-1\rbrack \times \lbrack 1,n-k-1\rbrack$, and we have chosen the normalization where $x_{i,n-k} = 1$ for all $i=1,\ldots, k-1$.  
\end{definition}
In the integral (\ref{eq: stringy integral finite d}) we have conditions under which the integral converges, namely that the parameters $\alpha_{i,j}$ and $c_T$ must be chosen such that the origin is in the interior of the Newton polytope, see \cite[Claim 1]{AHL2019Stringy} for details.

Denote by ${\rm PSSYT}_{k,n}^{r} \subset {\rm SSYT}(k, [n])$ the set of prime tableaux in ${\rm SSYT}(k, [n])$ with $r$ or less columns and by ${\rm PSSYT}_{k,n} \subset {\rm SSYT}(k, [n])$ the set of all prime tableaux in ${\rm SSYT}(k, [n])$.

It is natural\footnote{See talks by Arkani-Hamed, Frost, Plamondon, Salvatori, and Thomas in \cite{AFPST21_surface, AFPST21}.} to introduce the $d\rightarrow \infty$ limit of the Grassmannian string integral (\ref{eq: stringy integral finite d}):
\begin{eqnarray}\label{eq: stringy integral infinite}
	\mathbf{I}^{(\infty)}_{k,n} & =&  (\alpha')^{a}\int_{\left(\mathbb{R}_{>0}^{n-k-1}\right)^{\times (k-1)}} \left(\prod_{(i,j)}\frac{dx_{i,j}}{x_{i,j}}\right)\left( \prod_{T \in \text{PSSYT}_{k,n}} \ch_T^{-\alpha'c_T}(x_{i,j})\right).
\end{eqnarray}
For finite type cluster algebras, our integrand is finite. However, starting at $(k,n) = (3,9)$ the integrand involves an infinite product.

We also introduce another version of the Grassmannian string integral (\ref{eq: stringy integral finite d}) using all prime tableaux up to certain columns. 
\begin{definition} \label{def:Grassmannian string integral using prime tableaux up to certain columns}
For $2 \le k \le n-2$ and $r \ge 1$, we define
\begin{eqnarray}\label{eq: stringy integral using tableaux up to certain columns}
	\mathbf{I}'^{(r)}_{k,n} & =&  (\alpha')^{a}\int_{\left(\mathbb{R}_{>0}^{n-k-1}\right)^{\times (k-1)}} \left(\prod_{(i,j)}\frac{dx_{i,j}}{x_{i,j}}\right)\left( \prod_{T \in \text{PSSYT}_{k,n}^{r}} \ch_T^{-\alpha'c_T}(x_{i,j})\right),
\end{eqnarray}
where $a = (k-1)(n-k-1)$, and $\alpha'$, $c_T$ are certain parameters defined in the same way as Definition \ref{def:Grassmannian string integral, finite d}. 

\end{definition} 
Note that in the limit $r\rightarrow \infty$ the integrands for (\ref{eq: stringy integral using tableaux up to certain columns}) and (\ref{eq: stringy integral infinite}) coincide. In this way we have a combinatorial construction which relates prime tableaux to stringy integrals, and a geometric interpretation of the set of all prime tableaux in terms of a polytope.

The polynomials $\text{ch}_T$ that appear in the integrands of (\ref{eq: stringy integral finite d}) and (\ref{eq: stringy integral using tableaux up to certain columns}) are in bijection with prime tableaux and can be calculated using (\ref{eq:monomToTableaux}).

An important problem which may help with the evaluation will be investigated in Section \ref{subsec:u-equations}: to rewrite Equation \eqref{eq: stringy integral infinite} in terms of rational functions which are invariant under the torus action, that is the so-called $u$-variables \cite{AHLT2019}, and then to calculate the binary relations among them. See also \cite{E2021} for another physical application of binary relations in the context of CEGM scattering amplitudes.

By \cite[Claim 1]{AHL2019Stringy}, the leading order term in the series expansion around $\alpha' = 0$ has a beautiful interpretation as the volume of a polytope, where the simple poles correspond to facets.  The polytope is dual to the Newton polytope $\mathbf{N}_{k,n}^{(1)}$.  This leading order contribution was formulated originally in \cite{CEGM2019} by Cachazo, Early, Guevara and Mizera (CEGM) using the scattering equations formalism.

\begin{remark}
    The stringy integral in Equation \eqref{eq: stringy integral infinite} converges if and only if the origin is in the interior of the Newton polytope, see \cite[Claim 1]{AHL2019Stringy}; however, the $\alpha' \rightarrow 0$ limit is calculated by the CEGM scattering equations formula \cite{CEGM2019}, which has no such convergence limitation.
\end{remark}

It turns out that the limit $\alpha' \rightarrow 0$ of the Grassmannian string integral 
 (\ref{eq: stringy integral original plucker case}) coincides with the CEGM scattering equations formula \cite{AHL2019Stringy}.  Let us sketch the CEGM formula, referring to \cite{CEGM2019} for details.

First we define a scattering potential function 
$$\mathcal{S}^{(d=1)}_{k,n} = \sum_{J}\log(p_J) \mathfrak{s}_J,$$
where $p_J$ is the maximal $k\times k$ minor with column set $J = \{j_1,\ldots, j_k\}$, and the \textit{Mandelstam variables} $\mathfrak{s}_J$ are coordinate functions on the \textit{kinematic space}
$$\mathcal{K}(k,n) = \left\{(\mathfrak{s}_J) \in \mathbb{R}^{\binom{\n}{k}}: \sum_{J: J \ni i }\mathfrak{s}_J = 0,i=1,\ldots, n \right\}.$$
Then \cite{CEGM2019} defined the (planar) generalized biadjoint scalar amplitude
$$m^{(k)}_n = \sum_{c \in \text{crit}(\mathcal{S}^{(1)}_{k,n})}\frac{1}{\det'\Phi}\left(\prod_{j=1}^n\frac{1}{p_{j,j+1,\ldots, j+k-1}(c)}\right)^2,$$
where the sum is over all critical points $c$ of $\mathcal{S}^{(1)}_{k,n}$, and where $\det'\Phi$ is the so-called reduced Hessian determinant (see \cite[Equation 2.4]{CEGM2019} for details). For example,  
$$m^{(2)}_4 = \frac{1}{s_{12}} + \frac{1}{s_{23}},$$
$$m^{(2)}_5 = \frac{1}{s_{12}s_{34}} + \frac{1}{s_{23}s_{45}} + \frac{1}{s_{34}s_{15}} + \frac{1}{s_{12}s_{45}} + \frac{1}{s_{23}s_{15}},$$
and
	\begin{equation}
		\label{eq:14terms}  \begin{matrix}
			m^{(2)}_6 = & \,\,\frac{1}{s_{12} s_{34} s_{56}}
			+\frac{1}{s_{12} s_{56}  s_{123}}
			+\frac{1}{s_{23} s_{56}  s_{123}}
			+\frac{1}{s_{23} s_{56}  s_{234}}
			+\frac{1} {s_{34} s_{56} s_{234}}
			+ \frac{1}{s_{16} s_{23} s_{45}}
			+ \frac{1}{ s_{12} s_{34} s_{345}} \smallskip  \\
			&   +\, \frac{1}{s_{12} s_{45} s_{123}}
			+ \frac{1}{s_{12} s_{45} s_{345}}
			+\frac{1}{s_{16} s_{23} s_{234}}
			+\frac{1} {s_{16} s_{34} s_{234}}
			+\frac{1}{s_{16} s_{34} s_{345}}
			+\frac{1}{s_{16} s_{45} s_{345}}
			+\frac{1}{s_{23} s_{45} s_{123}}.
		\end{matrix}
	\end{equation}
In general, Cachazo-He-Yuan \cite{CHY14} introduced a compact formula for biadjoint scalar amplitudes (as well as amplitudes for many other Quantum Field Theories).

The $k\ge 3$ analog was discovered by Cachazo-Early-Guevara-Mizera (CEGM); they have have been the subject of intensive study since their introduction \cite{CEGM2019}.

A second expression for the leading order in the expansion around $\alpha' \rightarrow 0$ is a compact expression involving piecewise-linear functions,
$$\lim_{\alpha'\rightarrow0}\mathbf{I}^{(\infty)}_{k,n}=\int_{\mathbb{T}^{k-1,n-k}}\exp\left(-\sum_{T \in \text{PSSYT}_{k,n}}\mathfrak{s}_T\text{ch}^\text{Trop}_T(y_{i,j})\right)dy_{i,j},$$
where $\ch^\text{Trop}_T(y_{i,j})$ is the usual tropicalization of the character polynomial $\ch_T(x_{i,j})$, and where $\mathbb{T}^{k-1,n-k} = (\mathbb{T}^{n-k})^{\times (k-1)}$ and $\mathbb{T}^{n-k} = \RR^{n-k}/\RR(1, \ldots, 1)$. 

The fundamental tropical integral of this form was defined first in \cite{CE2020}, called there the \textit{global Schwinger parametrization} of Feynman diagrams.  In this work we propose a generalization of the integrand which includes all prime elements in Lusztig's dual canonical basis, as parameterized by prime tableaux.

Clearly there are many questions about this integral which we leave to future work.  One of these is highly nontrivial:
\begin{itemize}
    \item To evaluate the tropical limit, one has to either compute an infinite Minkowski sum, or else find a way to evaluate the CEGM formula for a scattering potential that involves an infinite summation indexed by prime tableaux.
\end{itemize}

\subsection{$u$-equations and $u$-variables} \label{subsec:u-equations}

In what follows, building on \cite[Section 6.2]{AHL2019Stringy}, we propose a system of so-called $u$-variables for the (infinite) Grassmannian string integral $\mathbf{I}^{(\infty)}_{k,n}$ defined in Equation \eqref{eq: stringy integral infinite}.  The motivation is to make the integrand manifestly compatible with the singularities of the function obtained by taking the $\alpha'\rightarrow 0$ limit.  To construct the infinite integrand is an important problem \cite[Section 12.3]{AHL2021}. The second step, to characterize the binary relations among the $u$-variables, will be considered in future work.

The new integrand is reorganized as a product of cross-ratios $u_T$ on the Grassmannian $\Gr(k,n)$, the so-called $u$-variables, one for each prime tableau $T$.  The $u$-variables \cite{AHLT2019} have been defined for finite-type cluster algebras arising from $\Gr(2,n)$ \cite{AHL2021} (and see the original work of Koba-Nielsen \cite{KN69} in the physics literature), but for general Grassmannians $\Gr(k,n)$ the cluster algebras are of infinite type and new methods are required \cite{AHL2019Stringy}. The main idea of our solution is to construct $u$-variables for Grassmannian string integrals as ratios of characters $\ch_T$ of prime tableaux $T$ (equivalently, as ratios of $q$-characters of prime modules of the quantum affine algebra $U_q(\widehat{\mathfrak{sl}_k})$).  In this way, our proposal for an integrand with infinite product of $u$-variables which satisfy certain binary-type identities, as has been explored in the finite type case in \cite{AHLT2019}.

We first formulate our proposal and then we label the $u$-variables and $u$-equations for $\mathbf{I}^{(2)}_{3,6}$ (in our notation) in \cite{AHL2019Stringy} to using prime tableaux.

\begin{definition} \label{def:stringy integeral for Grassmannians using u-variables}
For $k \le n$, we define
    \begin{eqnarray}
        \mathbf{I}^{(\infty)}_{k,n} & = & (\alpha')^{a}\int_{\left(\mathbb{R}_{>0}^{n-k-1}\right)^{\times (k-1)}} \prod_{i,j} \frac{dx_{i,j}}{x_{i,j}} \prod_{T \in {\rm PSSYT}_{k,n}} (u_T)^{\alpha' U_T},  
    \end{eqnarray}
where $\alpha'$, $U_T$ are some parameters, and $u_T$ is the $u$-variable corresponding to a prime tableau $T$ which is defined in (\ref{eq:u variables}). 
\end{definition}

Jensen, King, and Su \cite{JKS} introduced an additive categorification of Grassmannian cluster algebras using a category ${\rm CM}(B_{k,n})$ of Cohen-Macaulay $B_{k,n}$-modules, where $B_{k,n}$ is a certain quotient of the complete path algebra of a certain quiver. According to their result, there is a one to one correspondence between cluster variables $\CC[\Gr(k,n)]$ and reachable (meaning that the module can be obtained by mutations) rigid indecomposable modules in ${\rm CM}(B_{k,n})$. On the other hand, cluster variables of $\CC[\Gr(k,n)]$ are in one to one correspondence with reachable (meaning that the tableau can be obtained by mutations) prime real tableaux in $\SSYT(k, [n])$. Therefore there is a one to one correspondence between reachable rigid indecomposable modules in ${\rm CM}(B_{k,n})$ and reachable prime real tableaux in $\SSYT(k, [n])$. In particular, in finite type cases, there is a one to one correspondence between indecomposable modules (in finite type, all indecomposable modules are rigid) in ${\rm CM}(B_{k,n})$ and prime tableaux in $\SSYT(k, [n])$ (in finite type, all prime tableau are real). 

We conjecture that in general, there is a one to one correspondence between indecomposable modules in ${\rm CM}(B_{k,n})$ and prime tableaux in $\SSYT(k, [n])$. Denote by $M_T$ the indecomposable module in ${\rm CM}(B_{k,n})$ corresponding to a prime tableau $T$. We can label the Auslander-Reiten quiver \cite{AR75, JKS} by prime tableaux instead of indecomposable modules, see Figure \ref{fig: AR quiver for Gr36}. 

\begin{definition} \label{def:u-variables}
For every mesh 
\begin{align*}
\xymatrix@-6mm@C-0.2cm{
& & T_1 \ar[rdd]\\
 \\
&  S  \ar[r]     \ar[rdd]\ar[ruu]&  \ar[r] \vdots   & S'  \\
 \\
& & T_r \ar[ruu]  \\
}    
\end{align*} 
in the Auslander-Reiten quiver of ${\rm CM}(B_{k,n})$, we define the corresponding $u$-variable as 
\begin{align} \label{eq:u variables}
u_{S} = \frac{\prod_{i=1}^{r} \ch_{T_i}}{\ch_{S} \ch_{S'}}.
\end{align}
\end{definition}
Here we label the $u$-variables by semistandard Young tableaux rather than noncrossing tuples. The mesh can be degenerate. For example, in Figure \ref{fig: AR quiver for Gr36}, $u_{126} = \frac{p_{136}}{p_{126}}$. 

\begin{conjecture} \label{conj:u variables are solutions of u equations general case for Grkn}
There are unique integers $a_{T,T'}$, where $T$, $T'$ are prime tableaux, such that $u$-variables (\ref{eq:u variables}) are solutions of the system of equations 
\begin{align*}
u_T  + \prod_{T' \in {\rm PSSYT}_{k,n}} u_{T'}^{a_{T, T'}} = 1, 
\end{align*} 
where $T$ runs over all non-frozen prime tableaux in ${\rm PSSYT}_{k,n}$. 
\end{conjecture}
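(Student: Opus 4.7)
The plan is to reduce the conjectured $u$-equations to an algebraic identity among characters $\ch_T$ associated to prime tableaux, and then recognize this identity as a monomial relation among the $u$-variables defined in (\ref{eq:u variables}). Writing the definition out, we have
\[
1-u_T \;=\; \frac{\ch_S\, \ch_{S'} - \prod_i \ch_{T_i}}{\ch_S\, \ch_{S'}},
\]
where the mesh in the AR quiver is $S \to \bigoplus_i T_i \to S'$ with $S=T$ (taking the canonical choice determined by the AR translation). So the conjecture is equivalent to proving (i) the numerator factors as a product $\prod_{T'} \ch_{T'}^{b_{T'}}$ over a set of prime tableaux $T'$, and (ii) this factorization, when combined with the denominator, can be rewritten as a monomial in the $u_{T'}$'s with exponents that are uniquely determined integers $a_{T,T'}$.

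First I would establish (i) in two stages. In the cluster-variable case (real prime tableaux $S$), the identity $\ch_S\,\ch_{S'} = \prod_i \ch_{T_i} + \prod_{T'} \ch_{T'}^{b_{T'}}$ is the classical cluster exchange relation, which in this Grassmannian setting is a consequence of the categorification by ${\rm CM}(B_{k,n})$ of \cite{JKS} together with Caldero--Chapoton--Palu-type cluster character formulas applied to the AR triangle. The two summands correspond to the two extensions of $S$ by $S'$ (or rather to the two middle terms of the short exact sequences realizing $\mathrm{Ext}^1(S,S')$). For $S$ a prime non-real tableau, I would extend this by working in the stable category of ${\rm CM}(B_{k,n})$ (the relevant Frobenius structure of \cite{JKS}), using the fact that every indecomposable in the AR quiver is conjectured to correspond to a prime tableau, and the mesh relation among dimension vectors lifts to a multiplicative relation among characters in $\CC[\Gr(k,n)]$ via an extension of the cluster character.

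For step (ii), I would pass to $g$-vectors. Each $\ch_T$ has a well-defined $g$-vector $g_T$ (Section \ref{subsec:g vectors and highest l weights}), and the exponents $a_{T,T'}$ in any monomial identity $1-u_T = \prod_{T'} u_{T'}^{a_{T,T'}}$ are uniquely determined by a finite linear system over $\ZZ$ whose coefficient matrix records the $g$-vectors of the prime tableaux appearing in the numerator factorization and in the mesh $u_{T'}$. Uniqueness will follow from linear independence of the $g$-vectors of the relevant prime tableaux inside the lattice $\ZZ^{(k-1)(n-k-1)}$ modulo frozen directions; this is the same linear independence underlying the bijection in Conjecture \ref{conj:prime modules are rays} between facets of ${\bf N}_{k,n}^{(d)}$ and prime tableaux. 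Existence of $a_{T,T'}$ with the correct signs should follow once one observes that the numerator factorization is itself a product of characters of prime tableaux appearing in \emph{adjacent} meshes of the AR quiver, so each factor of the numerator cancels against exactly one denominator factor in a product of nearby $u$-variables.

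The main obstacle is step (i) in the non-real case: showing that $\ch_S\,\ch_{S'} - \prod_i \ch_{T_i}$ factors as a product of characters of prime tableaux when $S$ is a prime non-real tableau, i.e., when $M_S$ is non-rigid. The classical Caldero--Chapoton argument requires rigidity, and Palu's generalization still assumes cluster-tilting context. A plausible route is to use induction on the dimension filtration of $\mathrm{Ext}^1(S,S')$, combined with the existence results for the dual canonical basis structure of $\CC[\Gr(k,n,\sim)]$ from \cite{CDFL}, to control the terms that appear. As a sanity check before attempting the general case, I would verify the conjecture explicitly for $(k,n)=(2,n)$ (recovering the Koba--Nielsen $u$-equations of \cite{KN69}), for all finite-type cases $(3,6),(3,7),(4,6),(4,7)$ using the known finite AR quivers, and then for the borderline non-real cases of $(3,9)$ and $(4,8)$ studied in Section \ref{sec:Coarsest subdivisions and prime modules}, where the four distinguished non-real prime tableaux listed there furnish the first nontrivial test.
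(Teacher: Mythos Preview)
The statement you are attempting to prove is labeled a \emph{Conjecture} in the paper and is not proved there; the paper only verifies it by direct computation in the case $\Gr(3,6)$, writing out all sixteen $u$-equations and checking that the $u$-variables obtained from the AR quiver of ${\rm CM}(B_{3,6})$ satisfy them. There is therefore no proof in the paper to compare your proposal against, and your proposal should be read as a strategy toward the open conjecture rather than as a reconstruction of an existing argument.

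That said, your proposal has genuine gaps beyond those you flag. The uniqueness argument in step~(ii) cannot work as stated: you claim the exponents $a_{T,T'}$ are determined by linear independence of the $g$-vectors of prime tableaux in the lattice $\ZZ^{(k-1)(n-k-1)}$, but already for $\Gr(3,6)$ there are sixteen non-frozen prime tableaux while the mutable part of the $g$-vector lattice has rank four, so their $g$-vectors are massively dependent. In infinite type the situation is worse, with infinitely many prime tableaux mapping to a finite-rank lattice. Uniqueness, if it holds, must come from a different source---presumably from the multiplicative independence of the $u_{T'}$ themselves as rational functions on $X(k,n)$, which is a far more delicate statement and is not addressed by $g$-vector considerations.

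Your step~(i) also leans on more than you acknowledge even in the rigid case. The AR sequence $0\to S\to\bigoplus_i T_i\to S'\to 0$ does yield, via the cluster character multiplication formula, an identity $\ch_S\ch_{S'}=\prod_i\ch_{T_i}+X$ where $X$ is the cluster character of the \emph{other} middle term of a non-split extension in $\mathrm{Ext}^1(S',S)$; but there is no a~priori reason this second middle term decomposes into indecomposables each corresponding to a prime tableau, nor that $X$ then factors as a monomial in the $u_{T'}$. You assert this follows from ``adjacent meshes'' but give no mechanism. Finally, your argument in the non-real case explicitly invokes the paper's own conjecture that indecomposables in ${\rm CM}(B_{k,n})$ biject with prime tableaux, so the proposal is circular with respect to the conjectural framework of the paper rather than independent of it.
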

The equations in Conjecture \ref{conj:u variables are solutions of u equations general case for Grkn} are called $u$-equations. 

\begin{remark}
General $u$-equations have been introduced in \cite{AFPST21_surface, AFPST21, AFPST23a, AFPST23b} in the setting of representations of quiver with relations and cluster categories of finite type. In our paper, we work in the setting of the Grassmannian cluster category ${\rm CM}(B_{k,n})$ \cite{JKS}. 
We expect that $a_{T,T'}$ is the compatibility degree defined in \cite{FG19} when tableaux $T$, $T'$ are cluster variables. 
\end{remark}

We give an example to explain Conjecture \ref{conj:u variables are solutions of u equations general case for Grkn}.
\begin{example}
In the case of $\CC[\Gr(3,6)]$, the $u$-equations are
\begin{align*}
& u_{124} + u_{135}u_{136}u_{235}u_{236}u_{356}u_{135,246} =1, \\
& u_{125} + u_{136}u_{346}u_{135,246}u_{246}u_{236}u_{146}=1, \\
& u_{135} + u_{135,246}u_{246}^2u_{124,356}u_{245}u_{346}u_{236}u_{146}u_{256}u_{124} = 1, \\
& u_{124,356} + u_{135}u_{136}u_{145}u_{146}u_{235}u_{236}u_{245}u_{246}u_{135,246}^2 =1, 
\end{align*}
and their cyclic shifts. Note that the cyclic shifts of the indices of all Pl\"{u}cker coordinates in $\ch_T$ corresponds to promotions of $T$, \cite{Sch72}. 

The solutions of the $u$-equations can be read from the Auslander-Reiten quiver. We have
\begin{align*}
& u_{126} = \frac{p_{136}}{p_{126}}, \ u_{345} = \frac{p_{346}}{p_{345}}, \ u_{125} = \frac{p_{126}p_{135}}{p_{125}p_{136}}, \ u_{136} = \frac{\ch_{135,246}}{p_{136}p_{245}}, \ u_{245} = \frac{p_{345}p_{246}}{p_{245}p_{346}}, 
\end{align*}
\begin{align*} 
& u_{346} = \frac{\ch_{124,356}}{p_{346}p_{125}}, \ u_{124,356} = \frac{p_{125}p_{134}p_{356}}{\ch_{124,356}p_{135}}, \ u_{134} = \frac{p_{135}p_{234}}{p_{134}p_{235}}, \ u_{135} = \frac{p_{136}p_{145}p_{235}}{p_{135}\ch_{135,246}}, \end{align*}
\begin{align*} 
& u_{235} = \frac{\ch_{135,246}}{p_{235}p_{146}}, \ u_{135,246} = \frac{p_{146}p_{245}p_{236}}{\ch_{135,246}p_{246}}, \ u_{146} = \frac{p_{246}p_{156}}{p_{146}p_{256}}, \ u_{246} = \frac{p_{346}p_{256}p_{124}}{p_{246}\ch_{124,356}}, 
\end{align*}
\begin{align*} 
& u_{256} = \frac{\ch_{124,356}}{p_{256}p_{134}}, \ u_{234} = \frac{p_{235}}{p_{234}}, \ u_{156} = \frac{p_{256}}{p_{156}}, \ u_{356} = \frac{p_{135}p_{456}}{p_{356}p_{145}}, \ u_{145} = \frac{\ch_{135,246}}{p_{145}p_{236}}, 
\end{align*}
\begin{align*} 
& u_{236} = \frac{p_{246}p_{123}}{p_{236}p_{124}}, \ u_{124} = \frac{\ch_{124,356}}{p_{124}p_{356}}, \ u_{456} = \frac{p_{145}}{p_{456}}, \ u_{123} = \frac{p_{124}}{p_{123}}, 
\end{align*}
where we use $\ch_{T_1, \ldots, T_r}$ to denote $\ch_T$, and $T_i$'s are columns of $T$. Here $\ch_{124,356} = p_{124}p_{356}-p_{123}p_{456}$, and $\ch_{135,246}=p_{145}p_{236}-p_{123}p_{456}$. 
We checked that the $u$-variables satisfy $u$-equations  directly. The solution agrees with Section 9.3 of \cite{AHL2019Stringy}.
\end{example}

\begin{figure}
\adjustbox{scale=0.6,center}{%
\begin{tikzcd}
	&& {\begin{matrix} 1 \\ 2\\ 6 \end{matrix}} &&&& {\begin{matrix} 3 \\ 4\\ 5 \end{matrix}} \\
	& {\begin{matrix} 1 \\ 2\\ 5 \end{matrix}} && {\begin{matrix} 1 \\ 3\\ 6 \end{matrix}} && {\begin{matrix} 2 \\ 4\\ 5 \end{matrix}} && {\begin{matrix} 3 \\ 4\\ 6 \end{matrix}} \\
	&& {\begin{matrix} 2 \\ 3\\ 4 \end{matrix}} &&&& {\begin{matrix} 1 \\ 5\\ 6 \end{matrix}} &&& {\begin{matrix} 1 \\ 2\\ 5 \end{matrix}} \\
	& {\begin{matrix} 1 \\ 3\\ 4 \end{matrix}} && {\begin{matrix} 2 \\ 3\\ 5 \end{matrix}} && {\begin{matrix} 1 \\ 4\\ 6 \end{matrix}} && {\begin{matrix} 2 \\ 5\\ 6 \end{matrix}} && {\begin{matrix} 1 \\ 3\\ 4 \end{matrix}} \\
	{\begin{matrix} 1 & 3 \\ 2 & 5 \\ 4 & 6 \end{matrix}} && {\begin{matrix} 1 \\ 3\\ 5 \end{matrix}} && {\begin{matrix} 1 & 2 \\ 3 & 4\\ 5&6 \end{matrix}} && {\begin{matrix} 2 \\ 4\\ 6 \end{matrix}} && {\begin{matrix} 1 &3\\ 2&5\\4& 6 \end{matrix}} \\
	& {\begin{matrix} 3 \\ 5\\ 6 \end{matrix}} && {\begin{matrix} 1 \\ 4\\ 5 \end{matrix}} && {\begin{matrix} 2 \\ 3\\ 6 \end{matrix}} && {\begin{matrix} 1 \\ 2\\ 4 \end{matrix}} && {\begin{matrix} 3 \\ 5\\ 6 \end{matrix}} \\
	&& {\begin{matrix} 4 \\ 5\\ 6 \end{matrix}} &&&& {\begin{matrix} 1 \\ 2\\ 3 \end{matrix}}
	\arrow[from=2-2, to=1-3]
	\arrow[from=1-3, to=2-4]
	\arrow[from=4-2, to=3-3]
	\arrow[from=3-3, to=4-4]
	\arrow[from=4-2, to=5-3]
	\arrow[from=5-3, to=4-4]
	\arrow[from=2-2, to=5-3]
	\arrow[from=5-1, to=2-2]
	\arrow[from=5-1, to=4-2]
	\arrow[from=5-1, to=6-2]
	\arrow[from=6-2, to=5-3]
	\arrow[from=5-3, to=2-4]
	\arrow[from=5-3, to=6-4]
	\arrow[from=6-2, to=7-3]
	\arrow[from=7-3, to=6-4]
	\arrow[from=4-4, to=5-5]
	\arrow[from=2-4, to=5-5]
	\arrow[from=6-4, to=5-5]
	\arrow[from=5-5, to=4-6]
	\arrow[from=5-5, to=2-6]
	\arrow[from=2-6, to=5-7]
	\arrow[from=4-6, to=5-7]
	\arrow[from=4-6, to=3-7]
	\arrow[from=3-7, to=4-8]
	\arrow[from=5-7, to=4-8]
	\arrow[from=2-6, to=1-7]
	\arrow[from=1-7, to=2-8]
	\arrow[from=5-7, to=2-8]
	\arrow[from=2-8, to=5-9]
	\arrow[from=4-8, to=5-9]
	\arrow[from=5-9, to=4-10]
	\arrow[from=5-9, to=3-10]
	\arrow[from=5-9, to=6-10]
	\arrow[from=5-7, to=6-8]
	\arrow[from=6-8, to=5-9]
	\arrow[from=5-5, to=6-6]
	\arrow[from=6-6, to=5-7]
	\arrow[from=6-6, to=7-7]
	\arrow[from=7-7, to=6-8]
\end{tikzcd}
}
\caption{The Auslander-Reiten quiver for ${\rm CM}(B_{3,6})$ with vertices labelled by tableaux.}
\label{fig: AR quiver for Gr36}
\end{figure}
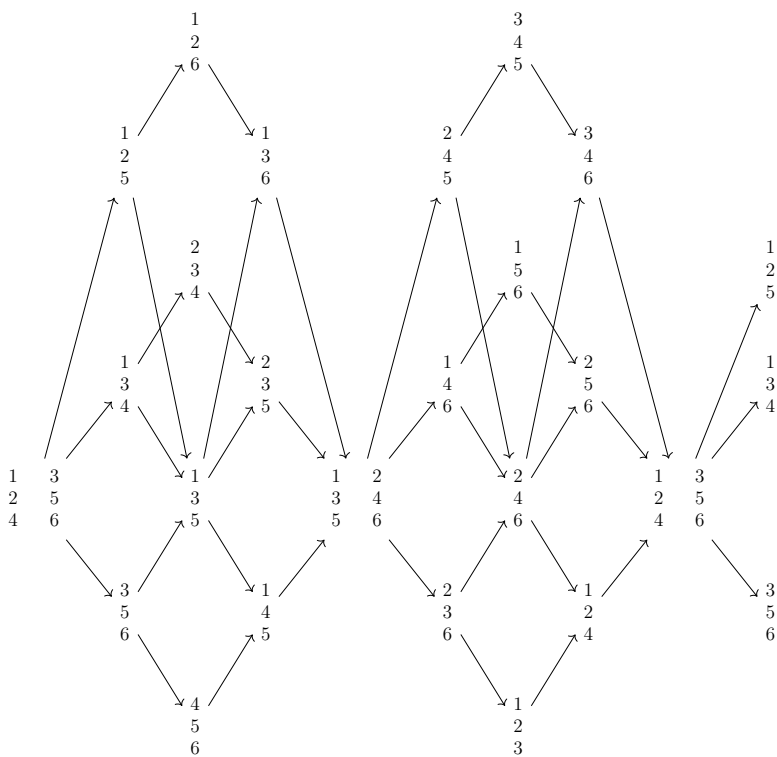

The same computations can be done for other finite type cases. The Auslander-Reiten quivers for Grassmannian cluster categories ${\rm CM}(B_{k,n})$ of finite type have been computed in \cite{JKS} (the vertices are labelled by Cohen-Macaulay modules in ${\rm CM}(B_{k,n})$) and \cite{DLaLi} (the vertices are labelled by tableaux). 

When $\CC[\Gr(k,n)]$ is of infinite type, the Auslander-Reiten quiver of the Grassmannian cluster category ${\rm CM}(B_{k,n})$ has infinitely many components. We will study Conjecture \ref{conj:u variables are solutions of u equations general case for Grkn} about the $u$-equations and their solutions in the future.   

\subsection{Stringy Integrals For Quantum Affine Algebras} \label{subsec:stringy integerals for quantum affine algebras}

We generalize the stringy integrals in Section \ref{subsec:stringy integeral for Grkn} to the setting for any quantum affine algebra as follows. 

Let $\mathfrak{g}$ be a simple Lie algebra over $\CC$ and $\ell \ge 0$. Recall that $\hat{I} = \{(i,s): i \in I, s = \xi(i) - 2d_ir, r \in [0, \ell] \}$, where $\xi: I \to \ZZ$ is a chosen height function, see Section \ref{sec:quantum affine algebras}.

\begin{definition} \label{def:stringy integral for quantum affine algebras}
For every simple Lie algebra $\mathfrak{g}$ over $\CC$, $\ell \ge 1$, and $d \ge 1$, we define  
\begin{eqnarray}\label{eq: stringy integral for quantum affine algebras}
\mathbf{I}^{(d)}_{\mathfrak{g},\ell} & =&  (\alpha')^{|\hat{I}|}\int_{\mathbb{R}_{>0}^{|\hat{I}|}}\left(\prod_{(i,s) \in \hat{I}}\frac{dY_{i,s}}{Y_{i,s}}\right)\left(\prod_{M} \tchi(L(M))^{-\alpha'c_M} \right),
\end{eqnarray}
where the product is over all dominant monomials $M$ such that the modules $L(M)$ correspond to facets of $\mathbf{N}^{(d-1)}_{\mathfrak{g}, \ell}$, and $\alpha'$, $c_M$ are some parameters.
\end{definition} 

We also define another version of stringy integrals for quantum affine algebras.

\begin{definition} \label{def:stringy integral for quantum affine algebras using degree}
For every simple Lie algebra $\mathfrak{g}$ over $\CC$, $\ell \ge 1$, and $d \ge 1$, we define  
\begin{eqnarray}\label{eq: stringy integral for quantum affine algebras, another version}
\mathbf{I'}^{(d)}_{\mathfrak{g},\ell} & =&  (\alpha')^{|\hat{I}|}\int_{\mathbb{R}_{>0}^{|\hat{I}|}}\left(\prod_{(i,s) \in \hat{I}}\frac{dY_{i,s}}{Y_{i,s}}\right)\left(\prod_{M} \tchi(L(M))^{-\alpha'c_M} \right),
\end{eqnarray}
where the product is over all dominant monomials $M$ in $\mathcal{P}^+_{\ell}$ of degree less or equal to $d$, and $\alpha'$, $c_M$ are some parameters.
\end{definition} 

We also define stringy integrals for quantum affine algebras in the case when $d \to \infty$ as follows.

\begin{definition} \label{def: stringy integral for quantum affine algebras, d is inifite}
For every simple Lie algebra $\mathfrak{g}$ over $\CC$ and $\ell \ge 1$, we define
\begin{eqnarray}\label{eq: stringy integral for quantum affine algebras, d is inifite}
\mathbf{I}^{(\infty)}_{\mathfrak{g},\ell} & =&  (\alpha')^{|\hat{I}|}\int_{\mathbb{R}_{>0}^{|\hat{I}|}}\left(\prod_{(i,s) \in \hat{I}}\frac{dY_{i,s}}{Y_{i,s}}\right)\left(\prod_{M} \tchi(L(M))^{-\alpha'c_M} \right),
\end{eqnarray}
where the product is over all 
dominant monomials $M$ such that $L(M)$'s are prime modules in $\mathcal{C}_{\ell}$, and $\alpha'$, $c_M$ are some parameters.
\end{definition}

We hope that the stringy integrals for quantum affine algebras will have applications to physics. 

\section{Limit $g$-vectors, Limit Facets, and Prime Non-real Modules} \label{sec:limit g vector}
In this section, we study prime non-real modules of quantum affine algebras using limit $g$-vectors. 

\subsection{Limit $g$-vectors and Limit Facets} \label{subsec:limit g vectors and limit facets}

It is observed in \cite{DFGK, HP21} that some prime non-real elements in the dual canonical basis of $\CC[\Gr(k,n)]$ which can be computed using limit $g$-vectors (these limit $g$-vectors are called limit rays in \cite{DFGK, HP21}). We generalize the concept of limit $g$-vectors to any cluster algebra in the following.

For a vector $v = (v_1, \ldots, v_m)$ in $\RR^{m}$, denote its $l^2$-norm by $ \left\Vert v \right\Vert = \sqrt{\sum_{i=1}^m |v_i|^2}$.
\begin{definition} \label{def: limit g-vectors}
For a cluster algebra $\mathcal{A}$ of infinite type of rank $m$, we say that a sequence of (distinct) $g$-vectors $g_1, g_2, \ldots$ of $\mathcal{A}$ has a limit $g$ if the greatest common factor of entries of $g$ is $1$ and for every $\epsilon >0$, there is a positive integer $N$ such that for every $j \ge N$, there is some positive real number $c_j$ such that $\left\Vert c_jg-g_j \right\Vert < \epsilon$. 
\end{definition}

\begin{definition} \label{def: limit normal vector}
Let $\mathfrak{g}$ be a simple Lie algebra over $\CC$ and let $\ell \in \ZZ_{\ge 1}$, $d \in \ZZ_{\ge 0}$. We say that a facet of a Newton polytope ${\bf N}_{\mathfrak{g}, \ell}^{(d)}$ defined in Section \ref{sec:Newton polytopes for quantum affine algebras} for a quantum affine algebra is a limit facet if the $g$-vector of the module corresponding to the facet is a limit $g$-vector of a sequence of $g$-vectors of $U_q(\widehat{\mathfrak{g}})$-modules. 
\end{definition}

We conjecture that every simple module corresponding to a limit $g$-vector is prime non-real. 
\begin{conjecture} \label{conj:limit g vectors are prime non-real}
Let $\mathfrak{g}$ be a simple Lie algebra over $\CC$ and let $\ell \in \ZZ_{\ge 1}$. If the $g$-vector of a simple module $L(M)$ in $\mathcal{C}_{\ell}$ is a limit $g$-vector of the cluster algebra corresponding to $\mathcal{C}_{\ell}$, then $L(M)$ is prime non-real. 
\end{conjecture}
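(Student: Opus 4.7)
The plan is to handle the two assertions separately, using the $g$-vector parametrization of simple modules in $\mathcal{C}_\ell$ (Section \ref{subsec:g vectors and highest l weights}) and the one direction of the Hernandez--Leclerc conjecture established by Qin \cite{Qin17} and Kang--Kashiwara--Kim--Oh--Park \cite{KKKO18, KKOP20, KKOP21, KKOP22}, namely that cluster monomials correspond to real simple modules.

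\emph{Primeness.} Let $g_1,g_2,\ldots$ be a sequence of distinct $g$-vectors, each corresponding to a simple module $L(M_j) \in \mathcal{C}_\ell$, such that $c_j g \to g_M$ in direction, where $g=g_M$ is primitive (\textrm{gcd} of entries equal to $1$) as in Definition \ref{def: limit g-vectors}. Assume for contradiction that $L(M) \cong L(M')\otimes L(M'')$ nontrivially, so that $M = M'M''$ and $g_M = g_{M'} + g_{M''}$ with both $g_{M'}, g_{M''}\neq 0$. First I would show that the subset of $\ZZ^m$ consisting of $g$-vectors of simple modules is closed under the operation of taking any decomposition of the underlying highest $l$-weight monomial. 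Then, using continuity of the $g$-vector map on the tropical fan $\mathcal{N}(\mathbf{N}^{(d)}_{\mathfrak{g},\ell})$, I would extract, for all sufficiently large $j$, decompositions $M_j = M_j' M_j''$ such that $g_{M_j'}$ and $g_{M_j''}$ converge (in direction) to the rays spanned by $g_{M'}$ and $g_{M''}$ respectively. This would produce two independent limit rays for the same limiting direction $g_M$, contradicting the primitivity built into Definition \ref{def: limit g-vectors} (or, equivalently, contradicting the indivisibility of the limit ray $\RR_{\ge 0} g_M$ in the $g$-fan).

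\emph{Non-reality.} Here I would proceed by contrapositive: assume $L(M)$ is real and show $g_M$ cannot be a limit $g$-vector. By the theory of common/triangular bases and monoidal categorification \cite{Qin17, KKKO18, KKOP20, KKOP21, KKOP22}, the collection of $g$-vectors of real simple modules which are cluster monomials forms a union of closed (rational simplicial) cones of the cluster complex; in particular, within the support of this $g$-fan, cluster-monomial $g$-vectors are the lattice points of the maximal cones and cannot be accumulated in direction by $g$-vectors of other distinct cluster monomials from strictly different maximal cones except along faces of strictly lower dimension. Thus a limit direction that is approached by an infinite sequence of distinct $g$-vectors of simple modules must lie on the boundary at infinity of the $g$-fan and cannot itself be the $g$-vector of a cluster monomial. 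The remaining case is when $L(M)$ is real but not a cluster monomial: here I would argue that the F-polynomial/truncated $q$-character factorization $\tchi(L(M)\otimes L(M)) = \tchi(L(M))^2$ forces $2g_M$ to again correspond to a simple module, and iterating this self-tensoring should produce a family of $g$-vectors lying strictly along the ray through $g_M$; combined with the convergence $c_jg \to g_M$, one can set up a cancellation between this arithmetic progression and the convergent sequence, leading to a contradiction with the injectivity of the $g$-vector parametrization.

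\emph{Main obstacle.} The serious difficulty lies in the non-reality statement, because the converse direction of the Hernandez--Leclerc conjecture (``real implies cluster monomial'') is widely open \cite{HL21}. Without it, one cannot simply exclude the existence of a hypothetical real $L(M)$ at a limit direction $g_M$ via ``it should have been a cluster monomial''. The argument sketched above via powers $L(M)^{\otimes r}$ will require a careful analysis of the $g$-vectors of self-tensor products of a real module and a quantitative estimate on how close cluster-monomial $g$-vectors can be to the ray $\RR_{\ge 0} g_M$. A promising line of attack is to import Qin's triangular basis machinery and his $g$-vector dominance order to this asymptotic setting, or alternatively to use the Kashiwara--Kim--Oh--Park invariants (in particular the $R$-matrix pole/$\mathfrak{d}$-invariant) to detect reality directly from the sequence $L(M_j)$ converging to $L(M)$ in the limit sense.
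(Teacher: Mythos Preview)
The statement you are trying to prove is explicitly labeled a \emph{conjecture} in the paper and is not proved there; no argument is given beyond a single worked example ($L(Y_{2,-4}Y_{2,0})$ in type $D_4$), verified by a direct $(q,t)$-character computation. So there is no ``paper's own proof'' to compare against, and any complete argument you produce would be a genuine new result.

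That said, your proposal has genuine gaps in both halves. For primeness, the sequence $g_1,g_2,\ldots$ in Definition~\ref{def: limit g-vectors} consists of $g$-vectors \emph{of the cluster algebra}, i.e.\ of cluster variables, which are themselves prime. There is no reason the $M_j$ should admit factorizations $M_j' M_j''$ tracking a hypothetical factorization $M = M'M''$, and your ``continuity of the $g$-vector map on the tropical fan'' does not supply one. Even granting such factorizations, your stated contradiction is not a contradiction: if $g_M = g_{M'} + g_{M''}$ with both summands nonzero, you get two \emph{different} limit directions, not two limit rays equal to $\RR_{\ge 0}g_M$; primitivity of $g_M$ says nothing about this.

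For non-reality, you correctly identify the obstacle: you need ``real $\Rightarrow$ cluster monomial'', which is the open direction of the Hernandez--Leclerc conjecture. Your fallback via self-tensoring does not yield a contradiction either: knowing that $rg_M$ is the $g$-vector of the simple module $L(M^r)$ for all $r$ is compatible with $g_M$ being a limit of distinct cluster-variable $g$-vectors---there is no ``cancellation'' to set up, and injectivity of the $g$-vector parametrization is not violated. The invariants you mention ($\mathfrak{d}$-invariants, triangular bases) are not known to behave continuously under the limit process of Definition~\ref{def: limit g-vectors}, so invoking them here is speculative.
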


Conjecture \ref{conj:limit g vectors are prime non-real} can be generalized to a more general setting. Suppose that $\mathcal{A}$ is a cluster algebra and it has a linear basis $B$ and every element $b$ in $B$ corresponds to a unique $g$-vector $g_b$ of $\mathcal{A}$. We say that an element $b$ in $B$ is real if $b^2 \in B$. We say that $b \in B$ is prime if $b \ne b'b''$ for any non-trivial elements $b', b'' \in B$. 
\begin{conjecture}\label{conj:limit g vectors are prime non-real for general basis}
Let $\mathcal{A}$ be a cluster algebra and suppose that it has a linear basis $B$ and every element $b$ in $B$ corresponds to a unique $g$-vector $g_b$ of $\mathcal{A}$. For any $b \in B$, if the $g$-vector $g_b$ corresponding to $b$ is a limit $g$-vector of the cluster algebra $\mathcal{A}$, then $b$ is prime non-real. 
\end{conjecture}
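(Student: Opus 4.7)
The plan is to reduce both assertions (primeness and non-realness of $b$) to geometric statements about the structure of $\{g_b : b \in B\}$ inside the $g$-vector lattice, and to argue that the limiting behavior in Definition \ref{def: limit g-vectors} is incompatible with either a nontrivial factorization $b = b'b''$ in $B$ or with $b^2 \in B$.

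First I would record the basic additivity of $g$-vectors under multiplication in the basis: for the classes of bases admitting a $g$-vector parametrization (in particular the dual canonical basis of $\CC[\Gr(k,n)]$ and all cases covered by Conjecture \ref{conj:limit g vectors are prime non-real}), if a product of two basis elements is again a basis element then their $g$-vectors add. Under this additivity, realness of $b$ yields $g_{b^{2}} = 2 g_b$ on the ray $\RR_{>0} g_b$, and a nontrivial factorization $b = b'b''$ in $B$ yields $g_b = g_{b'} + g_{b''}$, placing $g_b$ in the relative interior of a $2$-dimensional rational cone spanned by $g$-vectors of strictly smaller basis elements.

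For non-realness, I would assume $b^{2} \in B$ and, under mild hypotheses satisfied by dual canonical bases, deduce inductively that all powers $b^m$ lie in $B$, so the ray $\RR_{>0} g_b$ is populated by infinitely many lattice points $g_{b^{m}} = m g_b$. Because the sequence $(g_n)$ of Definition \ref{def: limit g-vectors} consists of pairwise distinct vectors and converges in direction to $g_b$, a local-finiteness argument would force all but finitely many $g_n$ to coincide with some $g_{b^{m}}$, contradicting distinctness. For primeness, assuming $b = b'b''$ with $b', b''$ nontrivial in $B$, I would show that for $n$ large the approximating vector $g_n$ can be written as $g_n \approx g_{b'_n} + g_{b''_n}$ with $b'_n, b''_n \in B$ close in direction to $b'$ and $b''$ respectively, producing in the limit an honest factorization $b = b' b''$ whose consistency with $g_n \to g_b$ forces infinitely many $b_n$ to share bounded combinatorial data, contradicting their distinctness.

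The main obstacle will be the local-finiteness step underlying both arguments: one needs a precise statement that in a small conical neighborhood of the limit ray $\RR_{>0} g_b$, the $g$-vectors of basis elements of bounded $\ell^{2}$-norm are finite in number. For the Grassmannian case (Conjecture \ref{conj:limit g vectors are prime non-real}) this should be accessible via the Auslander--Reiten quiver description of ${\rm CM}(B_{k,n})$ and the tableau combinatorics of Section \ref{sec:tableaux and generalized root polytopes}, but for the fully general formulation (Conjecture \ref{conj:limit g vectors are prime non-real for general basis}) additional structural hypotheses on $B$, for example a categorification by a Hom-finite $2$-Calabi--Yau category or compatibility with a consistent scattering diagram in the sense of Gross--Hacking--Keel--Kontsevich, may need to be isolated and added to the hypothesis; identifying the minimal such axiom on $(B, g)$ that forces both conclusions seems to me the natural next target.
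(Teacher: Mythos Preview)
This statement is labeled a \emph{conjecture} in the paper, and the paper gives no proof of it; what the paper does provide is computational evidence in particular cases (for instance, the verification in Section~\ref{subsec:D4 nonreal example} that the type $D_4$ module $L(Y_{2,-4}Y_{2,0})$, whose $g$-vector is a limit $g$-vector, is prime non-real, carried out by an explicit $(q,t)$-character computation). There is therefore nothing to compare your proposal against, and your proposal should be read as an attempted attack on an open problem rather than a reconstruction of an existing argument.

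As a strategy, the sketch has genuine gaps beyond the ones you flag. For non-realness, even granting that $b$ real forces $b^m\in B$ and $g_{b^m}=mg_b$, you do not obtain a contradiction: the defining sequence $(g_n)$ consists of $g$-vectors of \emph{cluster variables}, converges only in direction, and typically has unbounded norm, so there is no local-finiteness statement that forces the $g_n$ onto the ray $\mathbb{R}_{>0}g_b$ or into coincidence with the $g_{b^m}$; nothing prevents a distinct sequence of cluster-variable $g$-vectors from approaching the ray while $b$ happens to be real. For primeness, the step ``for $n$ large, $g_n\approx g_{b'_n}+g_{b''_n}$ with $b'_n,b''_n\in B$ close to $b',b''$'' has no mechanism behind it: cluster variables are prime, so their $g$-vectors do not admit such decompositions inside the set of basis $g$-vectors, and there is no continuity principle producing nearby factorizations from a single factorization at the limit. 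Your own closing remark is the honest assessment: without substantial additional hypotheses on $B$ (a categorification, or control via a scattering diagram), neither argument can be completed, and even with such hypotheses the conjecture remains open.
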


\subsection{An example of limit $g$-vector}

In the case of Grassmannian cluster algebras, in Section 7 of \cite{CDFL}, it is shown that given any tableau, one can recover its $g$-vector as follows. Any tableau $T \in \SSYT(k, [n])$ can be written uniquely as $S_1^{e_1} \cup \cdots \cup S_m^{e_m}$ for some integers $e_1, \ldots, e_m \in \ZZ$, where $S_1, \ldots, S_m$ are the tableaux in the initial cluster (we choose an order of the initial cluster variables). The vector $(e_1, \ldots, e_m)$ is the $g$-vector of $T$. 

We explain an example of limit $g$-vector in the case of $\CC[\Gr(4,8)]$. We fix the order 
\begin{align*}
& [1, 2, 3, 5], [1, 2, 4, 5], [1, 3, 4, 5], [1, 2, 3, 6], [1, 2, 5, 6], [1, 4, 5, 6], \\
& [1, 2, 3, 7], [1, 2, 6, 7], [1, 5, 6, 7], [1, 2, 3, 4], [2, 3, 4, 5], [3, 4, 5, 6], \\
& [4, 5, 6, 7], [5, 6, 7, 8], [1, 2, 3, 8], [1, 2, 7, 8], [1, 6, 7, 8]
\end{align*}
of the initial cluster, where each list corresponds to a Pl\"{u}cker coordinate. Mutate at the vertices in Figure \ref{fig:Gr48-limit-ray} alternatively where the tableaux $\scalemath{0.6}{ \begin{ytableau} 4 \\ 6 \\ 7 \\ 8 \end{ytableau} }$ and $\scalemath{0.6}{ \begin{ytableau} 1 & 3 \\ 2 & 4 \\ 4 & 7 \\ 6 & 8 \end{ytableau} }$ resides, we obtain the following $g$-vectors 
\begin{align*}
& (-1, 1, 0, 1, -1, 0, 0, 0, 0, 0, 0, 1, 0, 0, 0, 1, 0),\\ 
& (-2, 2, 0, 2, -1, -1, 0, -1, 1, 0, 0, 2, 0, 0, 0, 2, 0),\\ 
& (-3, 3, 0, 3, -1, -2, 0, -2, 2, 0, 0, 3, 0, 0, 0, 3, 0),\\ 
& (-4, 4, 0, 4, -1, -3, 0, -3, 3, 0, 0, 4, 0, 0, 0, 4, 0), \ldots
\end{align*}
When the mutation step $r$ is large enough, the $g$-vector we obtain is 
\begin{align*}
(-r,r,0,r,-1,-r+1,0,-r+1,r-1,0,0,r,0,0,0,r,0).     
\end{align*}
The limit $g$-vector of the sequence is 
\begin{align*}
(-1, 1, 0, 1, 0, -1, 0, -1, 1, 0, 0, 1, 0, 0, 0, 1, 0).
\end{align*}
This limit $g$-vector corresponds to the prime non-real tableau $\scalemath{0.6}{\begin{ytableau} 1 & 3 \\ 2 & 5 \\ 4 & 7 \\ 6 & 8 \end{ytableau} }$. This non-real tableau correspond to the non-real module $L(Y_{2, 0}Y_{1, -3}Y_{3, -3} Y_{2, -6})$. Up to shift the second indices, this module is the non-real module found in Section 13.6 in \cite{HL10}.

\begin{figure}
\scalebox{0.56}{
\begin{xy} 0;<1pt,0pt>:<0pt,-1pt>:: 
(0,147) *+{\begin{ytableau} 4 \\ 5 \\ 6 \\ 8 \end{ytableau}} ="0",
(86,145) *+{\begin{ytableau} 1 & 4 \\ 2 & 5 \\ 3 & 7 \\ 6 & 8 \end{ytableau}} ="1",
(44,86) *+{\begin{ytableau} 1 \\ 2 \\ 3 \\ 8 \end{ytableau}} ="2",
(692,143) *+{\begin{ytableau} 2 \\ 6 \\ 7 \\ 8 \end{ytableau}} ="3",
(393,146) *+{\begin{ytableau} 1 & 3 \\ 2 & 4 \\ 4 & 7 \\ 6 & 8 \end{ytableau}} ="4",
(221,143) *+{\begin{ytableau} 1 & 4 \\ 2 & 5 \\ 4 & 7 \\ 6 & 8 \end{ytableau}} ="5",
(300,29) *+{\begin{ytableau} 1 \\ 2 \\ 7 \\ 8 \end{ytableau}} ="6",
(583,142) *+{\begin{ytableau} 1 & 3 \\ 2 & 6 \\ 4 & 7 \\ 5 & 8 \end{ytableau}} ="7",
(482,143) *+{\begin{ytableau} 1 & 3 \\ 2 & 6 \\  4 & 7 \\ 6 & 8 \end{ytableau}} ="8",
(391,0) *+{\begin{ytableau} 4 \\ 6 \\ 7 \\ 8 \end{ytableau}} ="9",
(595,65) *+{\begin{ytableau} 1 \\ 6 \\ 7 \\ 8 \end{ytableau}} ="10",
(637,228) *+{\begin{ytableau} 2 \\ 3 \\ 4 \\ 5 \end{ytableau}} ="11",
(259,266) *+{\begin{ytableau} 3 \\ 4 \\ 5 \\ 6 \end{ytableau}} ="12",
(77,236) *+{\begin{ytableau} 4 \\ 5 \\ 6 \\ 7 \end{ytableau}} ="13",
(491,2) *+{\begin{ytableau} 5 \\ 6 \\ 7 \\ 8 \end{ytableau}} ="14",
(437,286) *+{\begin{ytableau} 1 \\ 2 \\ 3 \\ 4 \end{ytableau}} ="15",
(306,125) *+{\begin{ytableau} 3 \\ 4 \\ 7 \\ 8 \end{ytableau}} ="16",
"1", {\ar"0"},
"0", {\ar"2"},
"0", {\ar"13"},
"2", {\ar"1"},
"5", {\ar"1"},
"1", {\ar"6"},
"1", {\ar"15"},
"6", {\ar"2"},
"7", {\ar"3"},
"3", {\ar"10"},
"3", {\ar"11"},
"5", {\ar"4"},
"4", {\ar"6"},
"8", {\ar"4"},
"4", {\ar"9"}, 
"4", {\ar@*{[|(1)]}"4";"9"<3pt>},
"4", {\ar"12"},
"4", {\ar"15"},
"16", {\ar"4"},
"6", {\ar"5"},
"9", {\ar"5"},
"12", {\ar"5"},
"15", {\ar"5"},
"6", {\ar"8"},
"10", {\ar"6"},
"8", {\ar"7"},
"11", {\ar"7"},
"7", {\ar"12"},
"7", {\ar"15"},
"9", {\ar"8"},
"12", {\ar"8"},
"15", {\ar"8"},
"14", {\ar"9"},
"9", {\ar"16"},
"12", {\ar"11"},
"13", {\ar"12"},
"15", {\ar"16"},
\end{xy} }
\caption{Mutate alternatively at the vertices of the double arrow, we obtain a limit $g$-vector $(-1, 1, 0, 1, 0, -1, 0, -1, 1, 0, 0, 1, 0, 0, 0, 1, 0)$. This $g$-vector corresponds to the prime non-real tableau $[[1,2,4,6],[3,5,7,8]]$ with columns $1,2,4,6$ and $3,5,7,8$.}
\label{fig:Gr48-limit-ray}
\end{figure}
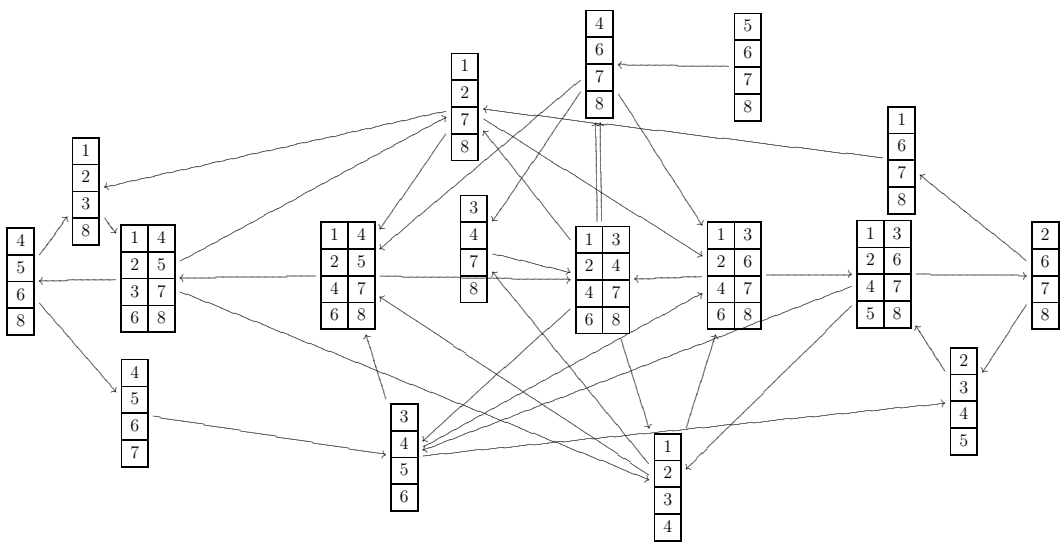

\subsection{Limit $g$-vectors in $\CC[\Gr(3,9)]$ and $\CC[\Gr(4,8)]$}

Recall that we say that a tableau in $\SSYT(k, [n])$ has rank $r$ if the tableau has $r$ columns and an element in $\CC[\Gr(k,n)]$ has rank $r$ if the tableau corresponding to it has $r$ columns. We say that a $g$-vector has rank $r$ is the tableaux corresponding to it has rank $r$. 

We compute cluster variables and limit $g$-vectors for $\CC[\Gr(k,n)]$ in terms of tableaux up to certain numbers of columns. That is, at each step of mutation, if we obtain some tableau with columns more than some number $m$, we mutate this vertex again. In this way, we can collect only tableaux (cluster variables) with columns less or equal to $m$. 

We compute limit $g$-vectors for $\CC[\Gr(3,9)]$ and $\CC[\Gr(4,8)]$ up to rank $56$ (the corresponding tableaux have at most $56$ columns). The sequence of numbers of rank $r$ ($r \ge 1$) limit $g$-vectors for $\CC[\Gr(3,9)]$ is 
\begin{align*}
& 0,0,3,0,0,3,0,0,6,0,0,6,0,0,12,0,0,6,0,0,18,0,0,12,0,0,12,0,0,12, \\
& 0,0,18,0,0,12,0,0,30,0,0,12,0,0,36,0,0,18,0,0,24,0,0,24, \ldots
\end{align*}
The sequence of numbers of rank $r$ ($r \ge 1$) limit $g$-vectors for $\CC[\Gr(4,8)]$ is 
\begin{align*}
& 0,2,0,2,0,4,0,4,0,8,0,4,0,12,0,8,0,12,0,8,0,12,0,8,0, \\
& 20,0,8,0,24,0,12,0,16,0,16,0,32,0,12,0,36,0,16,0,24,0,20,0,44, \ldots
\end{align*}

Based on the computations, we have the following interesting conjecture. Denote by $\phi(m)$ the Euler totient function which counts the numbers less or equal to $m$ and prime to $m$.

\begin{conjecture}
The number of rank $r$ ($r \ge 1$) prime non-real tableaux in $\SSYT(3, [9])$ corresponding to limit $g$-vectors is 
\begin{align*}
f_{3,9,r} = 
\begin{cases}
0, & r \pmod 3 = i, \ i \in \{1,2\}, \\
3 \phi(r/3), & r \pmod 3 = 0.
\end{cases}
\end{align*}
Any prime non-real tableau in $\SSYT(3, [9])$ is of the form $T^{(1)} \cup \cdots \cup T^{(m)}$ for some $m \in \ZZ_{\ge 1}$ and some $T^{(j)}$'s corresponding to limit $g$-vectors. 

The number of rank $r$ ($r \ge 1$) prime non-real tableaux in $\SSYT(4, [8])$ corresponding to limit $g$-vectors is 
\begin{align*}
f_{4,8,r} = 
\begin{cases}
0, & r \pmod 2 = 1, \\
2 \phi(r/2), & r \pmod 2 = 0.
\end{cases}
\end{align*}
Any prime non-real tableau in $\SSYT(4, [8])$ is of the form $T^{(1)} \cup \cdots \cup T^{(m)}$ for some $m \in \ZZ_{\ge 1}$ and some $T^{(j)}$'s corresponding to limit $g$-vectors. 
\end{conjecture}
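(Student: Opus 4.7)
The plan is to reduce the enumeration of rank-$r$ prime non-real tableaux in $\SSYT(3,[9])$ and $\SSYT(4,[8])$ corresponding to limit $g$-vectors to the combinatorics of periodic modules in the Grassmannian cluster category ${\rm CM}(B_{k,n})$ \cite{JKS}. Both cluster algebras $\CC[\Gr(3,9)]$ and $\CC[\Gr(4,8)]$ are of infinite (but not wild) cluster type, and the Auslander--Reiten quiver of ${\rm CM}(B_{k,n})$ is expected to contain, in addition to transjective components, families of \emph{tubes} indexed by slopes of limit rays. I would first show, using the explicit mutation analysis illustrated in Figure \ref{fig:Gr48-limit-ray}, that each limit $g$-vector arises as the limit, under an alternating mutation sequence of period two, of $g$-vectors along a ray emanating from the mouth of such a tube. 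The tableau corresponding to a limit $g$-vector at depth $m$ in a tube of rank $p$ should then have $mp$ columns, so the ranks supporting limit rays are exactly the multiples of $p$; for $\Gr(3,9)$ we will have $p=3$ (with first instance the orbit of size three containing $[[1,2,5],[3,4,8],[6,7,9]]$ in Section \ref{sec:Coarsest subdivisions and prime modules}), and for $\Gr(4,8)$ we will have $p=2$ (with first instance the two prime non-real tableaux of rank $2$).

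Next I would account for the factor $\phi(r/p)$ arithmetically. Inside a single rank-$p$ tube, the $m$-th quasi-simple translate is a genuine direct summand for every $m\ge 1$, but only those translates which are \emph{primitive} (i.e., not divisible as modules by a shorter element along the same ray) yield a new prime tableau; the non-primitive ones recover unions of strictly smaller primes in the $\cup$-monoid of Section \ref{subsec:isomorphism of monoids}. This primitivity condition on depth $m$ should match divisibility of the $g$-vector direction and hence give exactly $\phi(m)$ primitive limit $g$-vectors per tube, by the standard M\"obius-type count underlying the Euler totient. The multiplicative constant (three tubes for $\Gr(3,9)$, two for $\Gr(4,8)$) should then be deduced either by direct inspection of the quotient of the tubes' mouth by the cyclic promotion action, or by matching with the existing low-rank data
\[
3\phi(1)=3,\qquad 2\phi(1)=2,\qquad 2\phi(2)=2,\qquad 2\phi(3)=4,
\]
which agree with the lists in Section \ref{subsec:examples Gr38 Gr48 rays} and the enumeration table in Section \ref{sec:limit g vector}.

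For the second assertion, that every prime non-real tableau decomposes as $T^{(1)}\cup\cdots\cup T^{(m)}$ with each $T^{(j)}$ a tableau corresponding to a limit $g$-vector, I would use the conjectural bijection (Section \ref{subsec:u-equations}) between indecomposable modules in ${\rm CM}(B_{k,n})$ and prime tableaux. Under this bijection, a prime non-real tableau corresponds to a regular (non-transjective) indecomposable, which in an AR-quiver of tame type sits inside some tube. Writing the module as an extension of quasi-simples in that tube, and matching extensions under the Caldero--Chapoton type character formula with the $\cup$-product on tableaux, should realize the tableau as a union of the quasi-simple tableaux, which by the first part are exactly the primitive limit-$g$-vector tableaux.

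The main obstacle will be the second part: proving that the regular components of the AR quiver of ${\rm CM}(B_{k,n})$ really are tubes for $(k,n)\in\{(3,9),(4,8)\}$ and that every indecomposable rigidly corresponds to a union decomposition matching the $\cup$-monoid on tableaux. Although ${\rm CM}(B_{3,9})$ and ${\rm CM}(B_{4,8})$ are expected to be tame (in bijection with tubular algebras of types related to affine $E_8$ and $E_7$ respectively), an explicit description of their AR quivers, together with the translation of the module-theoretic union into the tableau operation $\cup$, is not available in the literature and would have to be established; verifying agreement with the observed sequences up to rank $56$ in Section \ref{sec:limit g vector} should provide a stringent consistency check along the way.
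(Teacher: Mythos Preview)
The statement you are attempting to prove is presented in the paper as a \emph{conjecture}, not a theorem: the authors write ``Based on the computations, we have the following interesting conjecture'' and offer no proof, only the numerical sequences of limit $g$-vectors computed up to rank $56$ as evidence. There is therefore no proof in the paper to compare your proposal against.

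As for the proposal itself as a possible attack on the conjecture: the heuristic that limit $g$-vectors should correspond to mouths of tubes in ${\rm CM}(B_{k,n})$, and that the totient $\phi$ arises from counting primitive depths, is appealing and may well be the right picture. But several of the steps you sketch are themselves open or at least unproven. You assert that a prime non-real tableau corresponds to a regular (non-transjective) indecomposable, yet the bijection between prime tableaux and indecomposables in ${\rm CM}(B_{k,n})$ is itself stated only as a conjecture in Section~\ref{subsec:u-equations}. The claim that ${\rm CM}(B_{3,9})$ and ${\rm CM}(B_{4,8})$ are tame with tubular regular components is not established in the literature you cite (and is a substantial result in its own right). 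Your translation of module extensions into the $\cup$-product on tableaux is asserted without argument; there is no Caldero--Chapoton formula in this setting that is known to behave this way. Finally, the totient argument is too loose: you would need to show precisely that a limit $g$-vector at depth $m$ is a positive integer multiple of a shorter limit $g$-vector if and only if $\gcd(m,\text{something})>1$, and it is not clear what that ``something'' is or why it equals $m$ itself. In short, your proposal identifies a plausible structural explanation but does not constitute a proof; it is closer to a refinement of the conjecture than a resolution of it.
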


\begin{remark}
Let $T$ be the tableau with columns $[[1,2,4,6],[3,5,7,8]]$. Then $T$ corresponds to a limit $g$-vector. It is shown in Section IV in \cite{ALS21} that $\ch(T \cup T) = \ch(T)^2-\ch(T')$ for some $T'$ ($T'$ is the union of some frozen tableaux) and $\ch(T \cup T \cup T) = \ch(T)^3 - 2 \ch(T) \ch(T')$. We expect that $\ch(T)^2-\ch(T')$ cannot be factored as $\ch(T'')\ch(T''')$ for any non-trivial $T''$, $T'''$ and $\ch(T \cup T)$ is prime. 

Although $\ch(T \cup T \cup T) = \ch(T)^3 - 2 \ch(T) \ch(T') = \ch(T)( \ch(T)^2 - 2 \ch(T') )$, we still expect that $\ch(T \cup T \cup T)$ cannot be factored as $\ch(T'')\ch(T''')$ for any non-trivial $T''$, $T'''$. Indeed, if $\ch(T \cup T \cup T) = \ch(T)( \ch(T)^2 - 2 \ch(T') ) = \ch(T'')\ch(T''')$, we must have that one of $\ch(T''), \ch(T''')$ is $\ch(T)$ and the other is $\ch(T)^2 - 2 \ch(T')$. Say, $\ch(T''')=\ch(T)^2 - 2 \ch(T') = \ch(T \cup T) - \ch(T')$. Since $T'$ is the product of some frozen tableaux, by comparing the $g$-vectors of both sides, we have that $T''' = T \cup T$. This implies that $\ch(T')=0$ which is a contradiction. Therefore we expect that $\ch(T \cup T \cup T)$ is also prime. 
\end{remark}

\subsection{Limit $g$-vectors for $\Gr(4,9)$}

Using the algorithm in 
Theorem 1.1 in \cite{BL}, we find that there are $18$ two-column prime non-real tableaux and $252$ three-column prime non-real tableaux in $\SSYT(4,[9])$. We also checked that by computer that they all correspond to limit $g$-vectors. 

The $18$ prime non-real tableaux are obtained from the two prime non-real tableaux $\scalemath{0.6}{\begin{ytableau} 1 & 3 \\ 2 & 5 \\ 4 & 7 \\ 6 & 8 \end{ytableau} }$,  $\scalemath{0.6}{\begin{ytableau} 1 & 2 \\ 3 & 4 \\ 5 & 6 \\ 7 & 8 \end{ytableau} }$ by replacing $1<2<\cdots <8$ by $a_1< \cdots < a_8 \in [9]$. 

Up to promotion, the $252$ three-column prime non-real tableaux in $\SSYT(4,[9])$ are
\begin{align*}
& 
\scalemath{0.66}{
\begin{ytableau}
1 & 2 & 3 \\
2 & 5 & 6 \\
4 & 7 & 8 \\
7 & 8 & 9
\end{ytableau}, \ \begin{ytableau}
1 & 1 & 2 \\
3 & 3 & 4 \\
5 & 6 & 7 \\
8 & 9 & 9
\end{ytableau}, \ \begin{ytableau}
1 & 1 & 3 \\
2 & 4 & 5 \\
3 & 6 & 7 \\
6 & 8 & 9
\end{ytableau}, \ \begin{ytableau}
1 & 1 & 4 \\
2 & 3 & 6 \\
4 & 5 & 8 \\
7 & 8 & 9
\end{ytableau}, \ \begin{ytableau}
1 & 2 & 3 \\
2 & 5 & 6 \\
4 & 7 & 8 \\
7 & 9 & 9
\end{ytableau}, \ \begin{ytableau}
1 & 1 & 4 \\
2 & 3 & 6 \\
4 & 5 & 8 \\
7 & 9 & 9
\end{ytableau}, \ \begin{ytableau}
1 & 2 & 2 \\
3 & 4 & 5 \\
6 & 6 & 7 \\
8 & 8 & 9
\end{ytableau},  \ \begin{ytableau}
1 & 1 & 2 \\
3 & 4 & 5 \\
6 & 7 & 8 \\
8 & 9 & 9
\end{ytableau}, \
\begin{ytableau}
1 & 1 & 3 \\
2 & 5 & 6 \\
4 & 7 & 8 \\
6 & 8 & 9
\end{ytableau}, \ \begin{ytableau}
1 & 2 & 4 \\
3 & 4 & 6 \\
5 & 5 & 8 \\
7 & 8 & 9
\end{ytableau}, } \\
& \scalemath{0.66}{ \begin{ytableau}
1 & 3 & 4 \\
2 & 6 & 6 \\
5 & 7 & 8 \\
7 & 8 & 9
\end{ytableau}, \ \begin{ytableau}
1 & 2 & 2 \\
3 & 4 & 5 \\
6 & 6 & 7 \\
8 & 9 & 9
\end{ytableau}, \ \begin{ytableau}
1 & 1 & 2 \\
3 & 4 & 5 \\
5 & 6 & 6 \\
7 & 8 & 9
\end{ytableau}, \ \begin{ytableau}
1 & 2 & 3 \\
4 & 4 & 5 \\
5 & 6 & 7 \\
8 & 9 & 9
\end{ytableau}, \ \begin{ytableau}
1 & 1 & 3 \\
2 & 5 & 6 \\
4 & 7 & 8 \\
6 & 9 & 9
\end{ytableau}, \ \begin{ytableau}
1 & 2 & 2 \\
3 & 3 & 4 \\
5 & 6 & 7 \\
8 & 8 & 9
\end{ytableau}, \ \begin{ytableau}
1 & 2 & 4 \\
3 & 4 & 6 \\
5 & 5 & 8 \\
7 & 9 & 9
\end{ytableau}, \ \begin{ytableau}
1 & 2 & 2 \\
3 & 3 & 4 \\
5 & 6 & 7 \\
8 & 9 & 9
\end{ytableau}, \ \begin{ytableau}
1 & 1 & 3 \\
2 & 4 & 5 \\
3 & 7 & 8 \\
6 & 9 & 9
\end{ytableau}, \ \begin{ytableau}
1 & 2 & 4 \\
2 & 3 & 6 \\
4 & 5 & 8 \\
7 & 8 & 9
\end{ytableau}, } \\
& \scalemath{0.66}{ \begin{ytableau}
1 & 2 & 3 \\
4 & 5 & 6 \\
6 & 7 & 8 \\
8 & 9 & 9
\end{ytableau}, \ \begin{ytableau}
1 & 1 & 2 \\
3 & 3 & 4 \\
5 & 6 & 7 \\
7 & 8 & 9
\end{ytableau}, \ \begin{ytableau}
1 & 2 & 2 \\
3 & 4 & 5 \\
6 & 7 & 8 \\
8 & 9 & 9
\end{ytableau}, \ \begin{ytableau}
1 & 2 & 2 \\
3 & 4 & 5 \\
5 & 6 & 6 \\
7 & 8 & 9
\end{ytableau}, \ \begin{ytableau}
1 & 2 & 3 \\
2 & 4 & 5 \\
3 & 7 & 8 \\
6 & 9 & 9
\end{ytableau}, \ \begin{ytableau}
1 & 2 & 4 \\
3 & 3 & 5 \\
4 & 5 & 8 \\
6 & 7 & 9
\end{ytableau}, \ \begin{ytableau}
1 & 3 & 3 \\
2 & 5 & 6 \\
4 & 7 & 8 \\
6 & 9 & 9
\end{ytableau}, \ \begin{ytableau}
1 & 1 & 3 \\
2 & 3 & 6 \\
4 & 5 & 8 \\
7 & 8 & 9
\end{ytableau}. }
\end{align*}

On the other hand, it is stated in \cite[Section 5.3]{HP21} that there are $g$-vectors (they call them rays) that are neither cluster variables nor limit $g$-vectors (limit $g$-vectors are called limit rays in \cite{HP21}). This means that there are prime non-real modules which cannot be obtained by computing limit $g$-vectors. 
	
\section{Limit $g$-vectors for Type $D_n$ Quantum Affine Algebras} \label{sec:limit g vectors type Dn}

In a recent paper \cite{BC22}, Brito and Chari found for the first time some examples of non-real $U_q(\widehat{\mathfrak{g}})$-modules when $\mathfrak{g}$ is of type $D_4$ which do not arise from a type $A_n$ module. One non-real module they found is $L(Y_{1, -11}Y_{1,-9}Y_{2,-6}Y_{1,-3}Y_{1,-1})$ (up to shifting the indices $s$ in $Y_{i,s}$). In Section \ref{sec:limit g vector}, we see that limit $g$-vectors can be used to produce non-real modules.  In this section, we construct one explicit example of non-real module of type $D_4$ using a limit $g$-vector.  

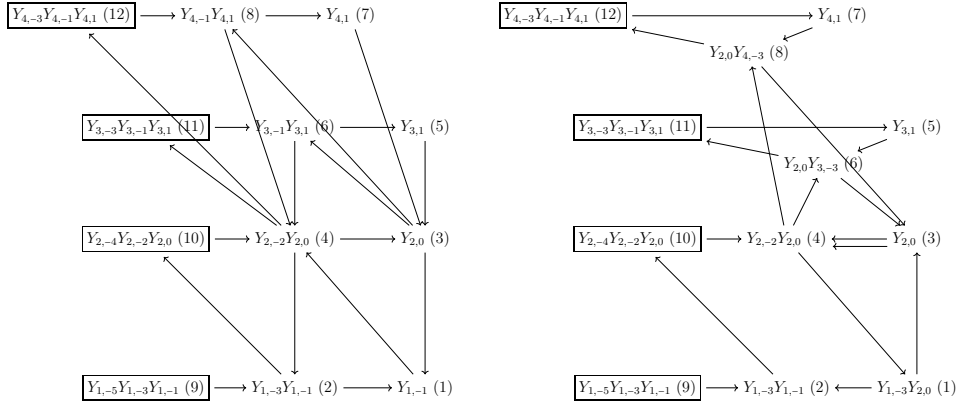
\begin{figure}
\begin{minipage}[t]{0.4\textwidth}
\scalebox{0.5}{
\begin{tikzpicture}[scale=0.99]
		\node  (1) at (-2, -2) {\fbox{$Y_{4,-3}Y_{4,-1}Y_{4,1}$ (12)}};
		\node  (2) at (2,-2) {$Y_{4,-1}Y_{4,1}$ (8)}; 
		\node  (3) at (5.5,-2) {$Y_{4,1}$ (7)}; 
		\node  (4) at (0, -5) {\fbox{$Y_{3,-3}Y_{3,-1}Y_{3,1}$ (11)}};
		\node  (5) at (4, -5) {$Y_{3,-1}Y_{3,1}$ (6)};
		\node  (6) at (7.5, -5) {$Y_{3,1}$ (5)};
		\node  (7) at (0, -8) {\fbox{$Y_{2,-4}Y_{2,-2}Y_{2,0}$ (10)}};
		\node  (8) at (4, -8) {$Y_{2,-2}Y_{2,0}$ (4)};
		\node  (9) at (7.5, -8) {$Y_{2,0}$ (3)};
		\node  (10) at (0, -12) {\fbox{$Y_{1,-5}Y_{1,-3}Y_{1,-1}$ (9)}};
		\node  (11) at (4, -12) {$Y_{1,-3}Y_{1,-1}$ (2)};
		\node  (12) at (7.5, -12) {$Y_{1,-1}$ (1)};
     
     \draw[->] (1)--(2); 
     \draw[->] (2)--(3);
      \draw[->] (4)--(5); 
     \draw[->] (5)--(6);
      \draw[->] (7)--(8); 
     \draw[->] (8)--(9);
      \draw[->] (10)--(11); 
     \draw[->] (11)--(12);
     \draw[<-] (12)--(9);  
     \draw[<-] (9)--(6);
     \draw[<-] (9)--(3);
     \draw[<-] (8)--(12);
     \draw[<-] (5)--(9);
     \draw[<-] (2)--(9);
     \draw[<-] (11)--(8);
       \draw[<-] (8)--(5);
     \draw[<-] (8)--(2);
     \draw[<-] (7)--(11);
     \draw[<-] (4)--(8);
     \draw[<-] (1)--(8);
\end{tikzpicture} }
\end{minipage}
\begin{minipage}[t]{0.4\textwidth}
\scalebox{0.5}{
\begin{tikzpicture}[scale=0.99]
		\node  (12) at (-2, -2) {\fbox{$Y_{4,-3}Y_{4,-1}Y_{4,1}$ (12)}};
		\node  (11) at (3,-3) {$Y_{2,0}Y_{4,-3}$ (8)}; 
		\node  (10) at (5.5,-2) {$Y_{4,1}$ (7)}; 
		\node  (9) at (0, -5) {\fbox{$Y_{3,-3}Y_{3,-1}Y_{3,1}$ (11)}};
		\node  (8) at (5, -6) {$Y_{2,0}Y_{3,-3}$ (6)};
		\node  (7) at (7.5, -5) {$Y_{3,1}$ (5)};
		\node  (6) at (0, -8) {\fbox{$Y_{2,-4}Y_{2,-2}Y_{2,0}$ (10)}};
		\node  (5) at (4, -8) {$Y_{2,-2}Y_{2,0}$ (4)};

\node  (5s) at (5.1, -8.2) {};
  
  \node  (4s) at (6.85, -8.2) {};
		\node  (4) at (7.5, -8) {$Y_{2,0}$ (3)};
		\node  (3) at (0, -12) {\fbox{$Y_{1,-5}Y_{1,-3}Y_{1,-1}$ (9)}};
		\node  (2) at (4, -12) {$Y_{1,-3}Y_{1,-1}$ (2)};
		\node  (1) at (7.5, -12) {$Y_{1,-3}Y_{2,0}$ (1)};
     
     \draw[->] (1)--(2); 
     \draw[->] (1)--(4);
      \draw[->] (2)--(6); 
     \draw[->] (3)--(2);
     \draw[->] (4)--(5);
     \draw[->] (4s)--(5s);
      \draw[->] (5)--(1);
      \draw[->] (5)--(8);
      \draw[->] (5)--(11);
      \draw[->] (6)--(5);
      \draw[->] (7)--(8);
      \draw[->] (8)--(4);
      \draw[->] (8)--(9);
      \draw[->] (9)--(7);
      \draw[->] (10)--(11);
      \draw[->] (11)--(12);
      \draw[->] (11)--(4);
      \draw[->] (12)--(10);
\end{tikzpicture} }
\end{minipage}
            \caption{The left hand side is an initial cluster of $K_0(\mathcal{C}_2^{D_4})$. The numbers in the brackets are labels of the vertices. The right hand side is the quiver obtained from the initial quiver by mutating at the vertices $1, 6, 8$. The arrow from vertex $3$ to $4$ is a double arrow.}
            \label{fig:initial cluster D4 and l is 2}
\end{figure} 

\subsection{The type $D_4$ module $L(Y_{2,-4}Y_{2,0})$ corresponds to a limit $g$-vector} 

Let $\mathfrak{g}$ be of type $D_4$ and $\ell = 2$. An initial cluster for the cluster algebra $K_0(\mathcal{C}_{2}^{D_4})$ is shown in Figure \ref{fig:initial cluster D4 and l is 2}. 

The module $L(Y_{2,-4}Y_{2,0})$ in type $D_4$ corresponds to a limit $g$-vector. This limit $g$-vector is obtained as follows. We label the vertices of the initial quiver as shown in Figure \ref{fig:initial cluster D4 and l is 2}. After mutations at the vertices $1, 6, 8$, we obtain a quiver with a double arrow from the vertex $3$ to the vertex $4$, see the quiver on the right hand side in Figure \ref{fig:initial cluster D4 and l is 2}. 

We take the order of initial cluster variables as:
\begin{align*}
& L(Y_{1, -1}), \  L(Y_{1, -3} Y_{1, -1}), \  L(Y_{2, 0}), \  L(Y_{2, -2} Y_{2, 0}), \ L(Y_{3, 1}), \ L(Y_{3, -1} Y_{3, 1}), \ L(Y_{4, 1}), \\ 
& L(Y_{4, -1} Y_{4, 1}),  \ L(Y_{1, -5} Y_{1, -3} Y_{1, -1}), \ L(Y_{2, -4} Y_{2, -2} Y_{2, 0}), \  L(Y_{3, -3} Y_{3, -1} Y_{3, 1}), \ L(Y_{4, -3} Y_{4, -1} Y_{4, 1}).
\end{align*}
Now we mutate the vertices $3, 4$ alternatively and obtain the $g$-vectors:
\begin{align*}
& (0, 0, 1, 0, 0, 0, 0, 0, 0, 0, 0, 0), \quad (0, 0, 0, 1, 0, 0, 0, 0, 0, 0, 0, 0), \\
& (-1, 1, 2, 0, 0, -1, 0, -1, 0, 0, 1, 1), \quad
(-1, 1, 3, -1, 0, -1, 0, -1, 0, 1, 1, 1), \\
& (-1, 1, 4, -2, 0, -1, 0, -1, 0, 2, 1, 1), \quad
(-1, 1, r+2, -r, 0, -1, 0, -1, 0, r, 1, 1), \quad r \ge 3. 
\end{align*}
The limit $g$-vector for this mutation sequence is $(0,0,1,-1,0,0,0,0,0,1,0,0)$. This is the $g$-vector of the module $L(Y_{2,-4}Y_{2,0})$. 

We will verify that the module $L(Y_{2,-4}Y_{2,0})$ in type $D_4$ is prime non-real by using $(q,t)$-characters \cite{Nak03, Nak04} (see also \cite{HL15, Bit21}) below. Note that the module $L(Y_{2,-4}Y_{2,0})$ in type $A_n$ ($n \ge 2$) is real (it is a snake module \cite{DLL19}). 

We first recall the results of \cite{Nak03, Nak04} about $(q,t)$-characters. 

\subsection{$(q,t)$-characters}

Let $C(z)$ be the quantum Cartan matrix of $\mathfrak{g}$ \cite{FR98} and let $\widetilde{C}(z) = (\widetilde{C}_{ij}(z))$ be the inverse of $C(z)$, see Section \ref{sec:quantum affine algebras}. The entries of $(\widetilde{C}_{ij}(z))$ have power series expressions in $z$ of the form \cite{HL15} $\widetilde{C}_{ij}(z) = \sum_{m \ge 1} \widetilde{C}_{ij}(m) z^m$. Nakajima \cite{Nak03, Nak04} introduced $(q,t)$-characters of $U_q(\widehat{\mathfrak{g}})$-modules which are $t$-deformations of $q$-characters. Let $K_t(\mathcal{C}_\ell)$ be the $t$-deformation of the Grothendieck ring $K_0(\mathcal{C}_\ell)$ \cite{HL15, Bit21}. Let $\hat{I}$ be the set of vertices of the initial quiver of the cluster algebra $K_0(\mathcal{C}_\ell)$. Denote by ${\bf Y}_t$ the $\ZZ[t^{\pm 1}]$-algebra generated by $Y_{i,p}^{\pm 1}$, $(i,p) \in \hat{I}$, subject to the relations (\cite{Nak03, Nak04}, see also \cite{Bit21, HL15}):
\begin{align*}
    Y_{i,p}*Y_{j,s}=t^{N(i,p;j,s)}Y_{j,s}*Y_{i,p},
\end{align*}
where we use Nakajima's convention \cite{Nak03, Nak04} (in type ADE, $d_i=1$):
\begin{align*}
N(i,p;j,s) = 2(\widetilde{C}_{ij}(s-p-d_i)-\widetilde{C}_{ij}(p-s-d_i)).
\end{align*}
For any family $\{u_{i,p} \in \ZZ : (i,p) \in \hat{I}\}$, denote 
\begin{align*}
    \prod_{(i,p) \in \hat{I}} Y_{i,p}^{u_{i,p}} = t^{-\frac{1}{2} \sum_{(i,p) < (j,s)} u_{i,p}u_{j,s} N(i,p;j,s) } \overrightarrow{*}_{(i,p) \in \hat{I}} Y_{i,p}^{u_{i,p}}. 
\end{align*}
The expression on the right hand side of the above equation does not depend on the order of $Y_{i,p}$'s and so $\prod_{(i,p) \in \hat{I}} Y_{i,p}^{u_{i,p}}$ is well-defined. The monomial $\prod_{(i,p) \in \hat{I}} Y_{i,p}^{u_{i,p}}$ is called a commutative monomial \cite{Nak03, HL15}. For a dominant monomial $m = \prod_{(i,p) \in \hat{I}} Y_{i,p}^{u_{i,p}(m)}$, (\cite{Nak03, Nak04}, see also \cite{HL15, Bit21}) the standard module $M(m)$ is the tensor product of the fundamental modules corresponding to each of the factors in $m$ in a particular order. In this paper, we choose the order as $Y_{i,s} < Y_{j, t}$ if and only if $s < t$. The truncated $(q,t)$-character of $M(m)$ is given by 
\begin{align*}
[M(m)]_t = t^{\alpha(m)} \overrightarrow{*}_{p \in \ZZ} \prod_{i \in I} \tchiqt(L(Y_{i,p}))^{*u_{i,p}(m)},
\end{align*}
where $\alpha(m)$ is the integer such that $m$ occurs with multiplicity one in the expansion of $[M(m)]_t$ on the basis of the commutative monomials of ${\bf Y}_t$ and the product $\overrightarrow{*}_{p \in \ZZ}$ is taken as increasing order. Since $\tchiqt(L(Y_{i,p}))$ and $\tchiqt(L(Y_{i,p'}))$ commute for any $p,p'$, the above expression is well-defined. 

In \cite{Nak03, Nak04}, a $\ZZ$-algebra anti-automorphism of ${\bf Y}_t$ called bar-involution is defined by: $t \mapsto t^{-1}$, $Y_{i,p} \mapsto Y_{i,p}$, $(i,p) \in \hat{I}$. 

For a simple module $L(m)$, denote by $[L(m)]_t$ its $(q,t)$-character. The following theorem by Nakajima \cite{Nak03, Nak04} gives an algorithm to compute (truncated) $(q,t)$-characters of a simple $U_q(\widehat{\mathfrak{g}})$-module: for every dominant monomial $m \in \mathcal{P}_{\ell}^+$, there is a unique element $[L(m)]_t$ of $K_t(\mathcal{C}_{\ell})$ such that 
\begin{itemize}
    \item $\overline{[L(m)]_t} = [L(m)]_t$, 
    
    \item $[L(m)]_t \in [M(m)]_t + \sum_{m' < m} t^{-1} \ZZ[t^{-1}] [M(m')]_t$. 
\end{itemize}
This result is generalized to non-simply-laced types in \cite{He04, FHOO22}.

\subsection{The Type $D_4$ Module $L(Y_{2,-4}Y_{2,0})$ is Prime Non-real} \label{subsec:D4 nonreal example}

The quantum Cartan matrix in type $D_4$ is $\scalemath{0.7}{\left( \begin {array}{cccc} {\frac {{z}^{2}+1}{z}}&-1&0&0
\\ \noalign{\medskip}-1&{\frac {{z}^{2}+1}{z}}&-1&-1
\\ \noalign{\medskip}0&-1&{\frac {{z}^{2}+1}{z}}&0
\\ \noalign{\medskip}0&-1&0&{\frac {{z}^{2}+1}{z}}\end {array} \right) }$.
In the following, we also write $m_1m_2^{-1}$ as $\frac{m_1}{m_2}$ for two dominant monomials $m_1, m_2$. By modified Frenkel-Mukhin algorithm \cite{FM00, Nak03, Nak04}, we have that $\tchiqt(L(Y_{2,0}))=Y_{2,0}$,
\begin{align*}
\scalemath{0.8}{ \tchiqt(Y_{2,-4}) } & \scalemath{0.8}{ = Y_{{2,-4
}}+ {\frac {Y_{{1,-3}}Y_{{3,-3}}Y_{{4,-3}}}{Y_{{2,-2}}}}+(t+\frac{1}{t}){\frac {Y_{{2,-2
}}}{Y_{{2,0}}}}+{\frac {Y_{{1,-1}}Y_{{1,-3}}}{Y_{{2,0}}}}+{\frac {Y_{{
1,-3}}Y_{{3,-3}}}{Y_{{4,-1}}}}+{\frac {Y_{{4,-3}}Y_{{1,-3}}}{Y_{{3,-1}
}}} +{\frac {Y_{{1,-1}}Y_{{3,-1}}Y_{{4,-1}}}{{Y_{{2,0}}}^{2}}} } \\
& \scalemath{0.8}{ +{\frac {Y_{{3,-3}}Y_{{3,-1}}}{Y_{{2,0}}}}+{\frac {Y_{{4,-1}}Y_{{4,-
3}}}{Y_{{2,0}}}} +{\frac {Y_{{4,-3}}Y_{{3,-3}}}{Y_{{1,-1}}}}+{\frac {Y_
{{1,-3}}Y_{{2,-2}}}{Y_{{4,-1}}Y_{{3,-1}}}}+{\frac {Y_{{1,-1}}Y_{{3,-1}
}}{Y_{{2,0}}Y_{{4,1}}}} +{\frac {Y_{{1,-1}}Y_{{4,-1}}}{Y_{{2,0}}Y_{{3,1
}}}}+{\frac {Y_{{3,-3}}}{Y_{{3,1}}}}+{\frac {Y_{{4,-3}}}{Y_{{4,1}}}}  } \\
& \scalemath{0.8}{
+{\frac {Y_{{4,-3}}
Y_{{2,-2}}}{Y_{{1,-1}}Y_{{3,-1}}}} +{\frac {Y_{{1,-1}}}{Y_{{4,1}}Y_{{3,
1}}}} +{\frac {Y_{{2,-2}}}{Y_{{4,-1}}Y_{{4,1}}}} +{\frac {Y_{{2,-2}}}{Y_
{{3,-1}}Y_{{3,1}}}} +{\frac {{Y_{{2,-2}}}^{2}}{Y_{{1,-1}}Y_{{3,-1}}Y_{{
4,-1}}}} +{
\frac {Y_{{2,-2}}Y_{{3,-3}}}{Y_{{1,-1}}Y_{{4,-1}}}}, }
\end{align*}
where the monomials on the right hand side are commutative monomials. 
Therefore $\tchiqt(L(Y_{2,-4}))*\tchiqt(L(Y_{2,0})) =p_1 + t p_2 + t^2 p_3$, where
\begin{align*}
\scalemath{0.8}{ p_1 = {\frac {Y_{{1,-1}}Y_{{3,-1}}Y_{{4,-1}}}{Y_{{2,0}}}}+{\frac {Y_{{1,-1}}
Y_{{3,-1}}}{Y_{{4,1}}}}+{\frac {Y_{{1,-1}}Y_{{4,-1}}}{Y_{{3,1}}}}+{
\frac {Y_{{2,0}}Y_{{1,-1}}}{Y_{{4,1}}Y_{{3,1}}}}+Y_{{2,-2}}, }
\end{align*}
\begin{align*}
\scalemath{0.8}{ p_2 = Y_{{1,-1}}Y_{{1,-3}}+Y_{{3,-1}}Y_{{3,-3}}+Y_{{4,-1}}Y_{{4,-3}}+{\frac 
{Y_{{3,-3}}Y_{{2,0}}}{Y_{{3,1}}}}+{\frac {Y_{{4,-3}}Y_{{2,0}}}{Y_{{4,1
}}}}+{\frac {Y_{{2,0}}Y_{{2,-2}}}{Y_{{4,-1}}Y_{{4,1}}}}+{\frac {Y_{{2,0
}}Y_{{2,-2}}}{Y_{{3,-1}}Y_{{3,1}}}}, }
\end{align*}
\begin{align*}
\scalemath{0.8}{ p_3 } & \scalemath{0.8}{ = {\frac {Y_{{1,-3}}Y_{{3,-3}}Y_{{4,-3}}Y_{{2,0}}}{Y_{{2,-2}}}}+Y_{{2,-2
}}+{\frac {Y_{{1,-3}}Y_{{3,-3}}Y_{{2,0}}}{Y_{{4,-1}}}}+{\frac {Y_{{1,-
3}}Y_{{4,-3}}Y_{{2,0}}}{Y_{{3,-1}}}}+Y_{{2,-4}}Y_{{2,0}}+{\frac {Y_{{3
,-3}}Y_{{4,-3}}Y_{{2,0}}}{Y_{{1,-1}}}}+{\frac {Y_{{2,-2}}Y_{{1,-3}}Y_{
{2,0}}}{Y_{{3,-1}}Y_{{4,-1}}}} } \\
& \quad \scalemath{0.8}{ +{\frac {Y_{{2,-2}}Y_{{3,-3}}Y_{{2,0}}}{
Y_{{1,-1}}Y_{{4,-1}}}}+{\frac {Y_{{2,-2}}Y_{{4,-3}}Y_{{2,0}}}{Y_{{1,-1
}}Y_{{3,-1}}}}+{\frac {{Y_{{2,-2}}}^{2}Y_{{2,0}}}{Y_{{1,-1}}Y_{{3,-1}}
Y_{{4,-1}}}}. }
\end{align*}
It follows that $\tchiqt(L(Y_{2,-4}Y_{2,0})) = p_3$. Since $\tchi(L(Y_{2,-4}Y_{2,0})) \ne \tchi(L(Y_{2,-4}))\tchi(L(Y_{2,0}))$, we have that $L(Y_{2,-4}Y_{2,0})$ is prime. 

By computing $\tchiqt(L(Y_{2,-4}Y_{2,0})) * \tchiqt(L(Y_{2,-4}Y_{2,0}))$, we found that the dominant monomials in $\tchiqt(L(Y_{2,-4}Y_{2,0})) * \tchiqt(L(Y_{2,-4}Y_{2,0}))$ are:
\begin{align} \label{eq:dominant monomials in the product}
Y_{2,-4}^2Y_{2,0}^2, \ Y_{1, -3} Y_{2, 0} Y_{3, -3} Y_{4, -3}, \ Y_{2,-2}^2, \ 2 Y_{2,-4} Y_{2,-2} Y_{2,0},
\end{align}
where $2 Y_{2,-4} Y_{2,-2} Y_{2,0}$ means that the monomial $Y_{2,-4} Y_{2,-2} Y_{2,0}$ appears two times. 
Therefore in the decomposition 
\begin{align} \label{eq:decomposition of 2m420 times 2m420 with mi}
\tchiqt(L(Y_{2,-4}Y_{2,0})) * \tchiqt(L(Y_{2,-4}Y_{2,0})) = \sum_{i} f_i(t) \tchiqt( L(m_i) ),
\end{align}
where $f_i(t)$ is a polynomial in $t$, we have that every $m_i$ can only be chosen from the monomials in (\ref{eq:dominant monomials in the product}).  

By computing $\tchiqt(L(Y_{1,-3})) * \tchiqt(L(Y_{3,-3})) * \tchiqt(L(Y_{4,-3})) * \tchiqt(L(Y_{2,0}))$, we obtain that 
\begin{align*}
& \scalemath{0.8}{ \tchiqt(L(Y_{1, -3} Y_{2, 0} Y_{3, -3} Y_{4, -3})) } = \scalemath{0.8}{ Y_{{1,-3}}Y_{{2,0}}Y_{{3,-3}}Y_{{4,-3}}+(t+\frac{1}{t}) {Y_{{2,-2}}}^{2}+Y_{{1,-3}}Y_
{{1,-1}}Y_{{2,-2}} +{\frac {Y_{{1,-3}}Y_{{2,-2}}Y_{{2,0}}Y_{{3,-3}}}{Y_
{{4,-1}}}} } \\
& \qquad \scalemath{0.8}{ +{\frac {Y_{{1,-3}}Y_{{2,-2}}Y_{{2,0}}Y_{{4,-3}}}{Y_{{3,-1}}
}}+{\frac {Y_{{1,-1}}Y_{{2,-2}}Y_{{3,-1}}Y_{{4,-1}}}{Y_{{2,0}}}}+Y_{{2
,-2}}Y_{{3,-3}}Y_{{3,-1}} +Y_{{2,-2}}Y_{{4,-3}}Y_{{4,-1}} +{\frac {Y_{{2
,-2}}Y_{{2,0}}Y_{{3,-3}}Y_{{4,-3}}}{Y_{{1,-1}}}} } \\
& \qquad \scalemath{0.8}{ +{\frac {Y_{{1,-3}}{Y_
{{2,-2}}}^{2}Y_{{2,0}}}{Y_{{3,-1}}Y_{{4,-1}}}} +{\frac {Y_{{1,-1}}Y_{{2
,-2}}Y_{{3,-1}}}{Y_{{4,1}}}}+{\frac {Y_{{1,-1}}Y_{{2,-2}}Y_{{4,-1}}}{Y
_{{3,1}}}} +{\frac {Y_{{2,-2}}Y_{{2,0}}Y_{{3,-3}}}{Y_{{3,1}}}}+{\frac {
Y_{{2,-2}}Y_{{2,0}}Y_{{4,-3}}}{Y_{{4,1}}}}+{\frac {{Y_{{2,-2}}}^{2}Y_{
{2,0}}Y_{{3,-3}}}{Y_{{1,-1}}Y_{{4,-1}}}} } \\
&  \qquad \scalemath{0.8}{  +{\frac {{Y_{{2,-2}}}^{2}Y_{{2,0}}}{Y_{{4,
-1}}Y_{{4,1}}}}+{\frac {{Y_{{2,-2}}}^{2}Y_{{2,0}}}{Y_{{3,-1}}Y_{{3,1}}
}}+{\frac {{Y_{{2,-2}}}^{3}Y_{{2,0}}}{Y_{{1,-1}}Y_{{3,-1}}Y_{{4,-1}}}} +{\frac {{Y_{{2,-2}}}^{2}Y_{{2
,0}}Y_{{4,-3}}}{Y_{{1,-1}}Y_{{3,-1}}}}+{\frac {Y_{{1,-1}}Y_{{2,-2}}Y_{
{2,0}}}{Y_{{4,1}}Y_{{3,1}}}}. }
\end{align*}
We checked that there is a monomial ${\frac {Y_{{1,-1}}Y_{{2,-2}}Y_{{3,-1}}}{Y_{{4,1}}}}$ appearing in $\tchiqt(L(Y_{1, -3} Y_{2, 0} Y_{3, -3} Y_{4, -3}))$ but not appearing in $\tchiqt(L(Y_{2,-4}Y_{2,0})) * \tchiqt(L(Y_{2,-4}Y_{2,0}))$. Therefore any $m_i$ on the right hand side of (\ref{eq:decomposition of 2m420 times 2m420 with mi}) cannot be $Y_{1, -3} Y_{2, 0} Y_{3, -3} Y_{4, -3}$. Similarly, any $m_i$ on the right hand side of (\ref{eq:decomposition of 2m420 times 2m420 with mi}) cannot be $Y_{2,-2}^2$. 
Therefore the only possible dominant monomial appearing on the right hand side of (\ref{eq:decomposition of 2m420 times 2m420 with mi}) are $Y_{2,-4}^2Y_{2,0}^2$ and $Y_{2,-4}Y_{2,-2}Y_{2,0}$. 

By computing $f=\tchiqt(L(Y_{2,-4})) * \tchiqt(L(Y_{2,-4})) * \tchiqt(L(Y_{2,0})) * \tchiqt(L(Y_{2,0}))$ and checking the coefficient of $\frac{1}{t^8}f$, we find that the monomial $Y_{2,-4}Y_{2,-2}Y_{2,0}$ appears in the truncated $(q,t)$-character of $L(Y_{2,-4}^2Y_{2,0}^2)$ exactly one time. Since the monomial $Y_{2,-4}Y_{2,-2}Y_{2,0}$ appears two times in $\tchiqt(Y_{2,-4}Y_{2,0}) * \tchiqt(Y_{2,-4}Y_{2,0})$, we have that 
\begin{align*}
& \tchiqt(Y_{2,-4}Y_{2,0}) * \tchiqt(Y_{2,-4}Y_{2,0}) = \tchiqt(L(Y_{2,-4}^2Y_{2,0}^2)) + \tchiqt( L(Y_{2,-4}Y_{2,-2}Y_{2,0}) ),
\end{align*} 
and $\tchiqt(L(Y_{2,-4}^2Y_{2,0}^2)) = p_1 + (t+ \frac{1}{t})p_2 + (t^2 + \frac{1}{t^2})p_3 + (t^3+\frac{1}{t^3})p_4$, where
\begin{align*}
& \scalemath{0.8}{ p_1 = {Y_{{2,-4}}}^{2}{Y_{{2,0}}}^{2
} + {\frac {{Y_{{1,-3}}}^{2}{Y_{{4,-3}}}^{2}{Y_{{2,0}}}^{2}}{{Y_{{3,-1}}}^
{2}}}+2\,{\frac {Y_{{1,-3}}{Y_{{4,-3}}}^{2}Y_{{3,-3}}{Y_{{2,0}}}^{2}}{
Y_{{1,-1}}Y_{{3,-1}}}}+2\,{\frac {{Y_{{1,-3}}}^{2}Y_{{3,-3}}Y_{{4,-3}}
{Y_{{2,0}}}^{2}}{Y_{{4,-1}}Y_{{3,-1}}}}  +{\frac {{Y_{{2,-2}}}^{2}{Y_{{4,-3}}}^{2}{Y_{{2,0}}}^{2}}{{Y_{{1,-1}}
}^{2}{Y_{{3,-1}}}^{2}}} } \\
& \qquad \scalemath{0.8}{ +{\frac {{Y_{{2,-2}}}^{4}{Y_{{2,0}}}^{2}}{{Y_{{
1,-1}}}^{2}{Y_{{3,-1}}}^{2}{Y_{{4,-1}}}^{2}}}+{\frac {{Y_{{2,-2}}}^{2}
{Y_{{3,-3}}}^{2}{Y_{{2,0}}}^{2}}{{Y_{{1,-1}}}^{2}{Y_{{4,-1}}}^{2}}}  +{
\frac {{Y_{{1,-3}}}^{2}{Y_{{3,-3}}}^{2}{Y_{{4,-3}}}^{2}{Y_{{2,0}}}^{2}
}{{Y_{{2,-2}}}^{2}}}   +{\frac {{Y_{{1,-3}}}^{2}{Y_{{2,-2}}}^{2}{Y_{{2,0}
}}^{2}}{{Y_{{3,-1}}}^{2}{Y_{{4,-1}}}^{2}}}+2\,{\frac {{Y_{{2,-2}}}^{2}
Y_{{1,-3}}Y_{{3,-3}}{Y_{{2,0}}}^{2}}{Y_{{1,-1}}Y_{{3,-1}}{Y_{{4,-1}}}^
{2}}} } \\
& \qquad \scalemath{0.8}{ +2\,{\frac {{Y_{{2,-2}}}^{2}Y_{{1,-3}}Y_{{4,-3}}{Y_{{2,0}}}^{2}}{
{Y_{{3,-1}}}^{2}Y_{{4,-1}}Y_{{1,-1}}}}+2\,{\frac {Y_{{1,-3}}{Y_{{3,-3}
}}^{2}Y_{{4,-3}}{Y_{{2,0}}}^{2}}{Y_{{1,-1}}Y_{{4,-1}}}}  +{\frac {{Y_{{1
,-3}}}^{2}{Y_{{3,-3}}}^{2}{Y_{{2,0}}}^{2}}{{Y_{{4,-1}}}^{2}}}+{\frac {
{Y_{{3,-3}}}^{2}{Y_{{4,-3}}}^{2}{Y_{{2,0}}}^{2}}{{Y_{{1,-1}}}^{2}}}+{Y
_{{2,-2}}}^{2} }\\
& \qquad \scalemath{0.8}{  +2\,{\frac {Y_{{3,-3}}Y_{{4,-3}}{Y_{{2,-2}}}^{2}{Y_{{2,0
}}}^{2}}{Y_{{4,-1}}Y_{{3,-1}}{Y_{{1,-1}}}^{2}}}+ Y_{{2,-2}}Y_{{2,-4}
}Y_{{2,0}}, }
\end{align*}
\begin{align*}
& \scalemath{0.8}{  p_2 = {\frac {{Y_{{2,-2}}}^{2}Y_{{4,-3}}Y_{{2,0}}}{Y_{{1,-1}}Y_{{3,-1}}}}+{
\frac {{Y_{{2,-2}}}^{3}Y_{{2,0}}}{Y_{{1,-1}}Y_{{3,-1}}Y_{{4,-1}}}}+{
\frac {Y_{{2,-2}}Y_{{4,-3}}Y_{{2,-4}}{Y_{{2,0}}}^{2}}{Y_{{1,-1}}Y_{{3,
-1}}}}+{\frac {{Y_{{1,-3}}}^{2}Y_{{3,-3}}Y_{{2,-2}}{Y_{{2,0}}}^{2}}{{Y
_{{4,-1}}}^{2}Y_{{3,-1}}}}  +{\frac {{Y_{{3,-3}}}^{2}{Y_{{4,-3}}}^{2}Y_{
{1,-3}}{Y_{{2,0}}}^{2}}{Y_{{1,-1}}Y_{{2,-2}}}} }\\
& \quad \scalemath{0.8}{  +{\frac {{Y_{{2,-2}}}^{3
}Y_{{1,-3}}{Y_{{2,0}}}^{2}}{Y_{{1,-1}}{Y_{{3,-1}}}^{2}{Y_{{4,-1}}}^{2}
}} +{\frac {{Y_{{2,-2}}}^{3}Y_{{4,-3}}{Y_{{2,0}}}^{2}}{{Y_{{1,-1}}}^{2}
{Y_{{3,-1}}}^{2}Y_{{4,-1}}}} +{\frac {Y_{{2,-2}}Y_{{3,-3}}Y_{{2,-4}}{Y_
{{2,0}}}^{2}}{Y_{{1,-1}}Y_{{4,-1}}}}  +{\frac {{Y_{{1,-3}}}^{2}{Y_{{3,-3
}}}^{2}Y_{{4,-3}}{Y_{{2,0}}}^{2}}{Y_{{4,-1}}Y_{{2,-2}}}}+{\frac {{Y_{{
2,-2}}}^{3}Y_{{3,-3}}{Y_{{2,0}}}^{2}}{{Y_{{1,-1}}}^{2}{Y_{{4,-1}}}^{2}
Y_{{3,-1}}}} }\\
& \quad \scalemath{0.8}{  +{\frac {Y_{{1,-3}}{Y_{{3,-3}}}^{2}Y_{{2,-2}}{Y_{{2,0}}}^{
2}}{{Y_{{4,-1}}}^{2}Y_{{1,-1}}}}+{\frac {Y_{{2,0}}Y_{{1,-3}}Y_{{2,-2}}
Y_{{3,-3}}}{Y_{{4,-1}}}}  +{\frac {{Y_{{2,-2}}}^{2}Y_{{1,-3}}Y_{{2,0}}}{
Y_{{4,-1}}Y_{{3,-1}}}}+{\frac {{Y_{{2,-2}}}^{2}Y_{{3,-3}}Y_{{2,0}}}{Y_
{{1,-1}}Y_{{4,-1}}}}+{\frac {{Y_{{1,-3}}}^{2}Y_{{2,-2}}Y_{{4,-3}}{Y_{{
2,0}}}^{2}}{{Y_{{3,-1}}}^{2}Y_{{4,-1}}}} }\\
& \quad \scalemath{0.8}{  +{\frac {Y_{{2,-4}}Y_{{1,-3}}Y
_{{4,-3}}{Y_{{2,0}}}^{2}}{Y_{{3,-1}}}}   +{\frac {Y_{{1,-3}}Y_{{3,-3}}Y_{
{2,-4}}{Y_{{2,0}}}^{2}}{Y_{{4,-1}}}}+{\frac {Y_{{2,-4}}Y_{{3,-3}}Y_{{4
,-3}}{Y_{{2,0}}}^{2}}{Y_{{1,-1}}}}+Y_{{1,-3}}Y_{{3,-3}}Y_{{4,-3}}Y_{{2
,0}}+{\frac {Y_{{1,-3}}Y_{{3,-3}}Y_{{4,-3}}Y_{{2,-4}}{Y_{{2,0}}}^{2}}{
Y_{{2,-2}}}}}\\
& \quad \scalemath{0.8}{  +{\frac {{Y_{{1,-3}}}^{2}Y_{{3,-3}}{Y_{{4,-3}}}^{2}{Y_{{2,0
}}}^{2}}{Y_{{2,-2}}Y_{{3,-1}}}} +{\frac {{Y_{{2,-2}}}^{2}Y_{{2,-4}}{Y_{
{2,0}}}^{2}}{Y_{{1,-1}}Y_{{3,-1}}Y_{{4,-1}}}} +{\frac {Y_{{2,-4}}Y_{{1,
-3}}Y_{{2,-2}}{Y_{{2,0}}}^{2}}{Y_{{4,-1}}Y_{{3,-1}}}}+{\frac {Y_{{2,-2
}}{Y_{{4,-3}}}^{2}Y_{{3,-3}}{Y_{{2,0}}}^{2}}{{Y_{{1,-1}}}^{2}Y_{{3,-1}
}}} +{\frac {Y_{{2,-2}}{Y_{{3,-3}}}^{2}Y_{{4,-3}}{Y_{{2,0}}}^{2}}{{Y_{{
1,-1}}}^{2}Y_{{4,-1}}}} }\\
& \qquad \scalemath{0.8}{  +{\frac {Y_{{1,-3}}{Y_{{4,-3}}}^{2}Y_{{2,-2}}{Y
_{{2,0}}}^{2}}{Y_{{1,-1}}{Y_{{3,-1}}}^{2}}}+{\frac {Y_{{1,-3}}Y_{{4,-3
}}Y_{{2,-2}}Y_{{2,0}}}{Y_{{3,-1}}}}+{\frac {Y_{{3,-3}}Y_{{4,-3}}Y_{{2,
-2}}Y_{{2,0}}}{Y_{{1,-1}}}}   +3\,{\frac {Y_{{1,-3}}Y_{{2,-2}}Y_{{3,-3}}Y
_{{4,-3}}{Y_{{2,0}}}^{2}}{Y_{{1,-1}}Y_{{3,-1}}Y_{{4,-1}}}}, }
\end{align*}
\begin{align*}
& \scalemath{0.8}{   p_3 = {\frac {{Y_{{2,-2}}}^{2}Y_{{1,-3}}Y_{{3,-3}}{Y_{{2,0}}}^{2}}{Y_{{1,-1}
}Y_{{3,-1}}{Y_{{4,-1}}}^{2}}}+{\frac {{Y_{{2,-2}}}^{2}Y_{{1,-3}}Y_{{4,
-3}}{Y_{{2,0}}}^{2}}{{Y_{{3,-1}}}^{2}Y_{{4,-1}}Y_{{1,-1}}}}+{\frac {Y_
{{3,-3}}Y_{{4,-3}}{Y_{{2,-2}}}^{2}{Y_{{2,0}}}^{2}}{Y_{{4,-1}}Y_{{3,-1}
}{Y_{{1,-1}}}^{2}}} +{\frac {Y_{{1,-3}}{Y_{{4,-3}}}^{2}Y_{{3,-3}}{Y_{{2
,0}}}^{2}}{Y_{{1,-1}}Y_{{3,-1}}}}  +{\frac {{Y_{{1,-3}}}^{2}Y_{{3,-3}}Y_
{{4,-3}}{Y_{{2,0}}}^{2}}{Y_{{4,-1}}Y_{{3,-1}}}} }\\
& \qquad \scalemath{0.8}{  +{\frac {Y_{{1,-3}}{Y_{
{3,-3}}}^{2}Y_{{4,-3}}{Y_{{2,0}}}^{2}}{Y_{{1,-1}}Y_{{4,-1}}}}, }
\end{align*}
\begin{align*}
\scalemath{0.8}{   p_4 = {\frac {Y_{{1,-3}}Y_{{2,-2}}Y_{{3,-3}}Y_{{4,-3}}{Y_{{2,0}}}^{2}}{Y_{{1
,-1}}Y_{{3,-1}}Y_{{4,-1}}}}. }
\end{align*}

Therefore the module $L(Y_{2,-4}Y_{2,0})$ in type $D_4$ is non-real.

\section{Discussion} \label{sec:discussion}

In this work we make a connection between tropical geometry, 
representation theory of quantum affine algebras, and scattering amplitudes in physics. 

In mathematical side, we introduce a sequence of Newton polytopes and in the case of $U_q(\widehat{\mathfrak{sl}_k})$, we construct explicitly simple modules from given facets of a Newton polytope. We conjecture that the obtained simple modules are prime modules, see Conjecture \ref{conj:prime modules are rays}. 


On physics side, we generalize the Grassmannian string integral to the setting that the integrand is the infinite product of 
 prime elements in the dual canonical basis of $\CC[\Gr(k,n)]$, see Section \ref{sec:physics}, and more generally the infinite product of the $q$-characters of all prime modules in the category $\mathcal{C}_{\ell}$ of a quantum affine algebra, see Section \ref{subsec:stringy integerals for quantum affine algebras}. 

 We also define the so called $u$-variables for every prime element in the dual canonical basis of $\CC[\Gr(k,n)]$ and we conjecture that the $u$-variables are unique solutions of $u$-equations, see Section \ref{subsec:u-equations}. The $u$-equations are important in the study of scattering amplitudes in physics, see \cite{AHL2019Stringy}.  

Our work raises many related questions. On mathematical side, it is important to give an explicit construction of dominant monomials corresponding to facets of Newton polytopes defined in Section \ref{sec:Newton polytopes for quantum affine algebras} and prove that every prime module in the category $\mathcal{C}_{\ell}$ corresponds to a facet of ${\bf N}_{\mathfrak{g}, \ell}^{(d)}$ for some $d$, see Conjecture \ref{conj:newton polytope and prime modules conjecture quantum affine algebra case}. It is also important to study compatibility of prime modules using Newton polytopes, see Conjecture \ref{conj:newton polytope and prime modules conjecture quantum affine algebra case}. 

On physics side, it is important to compute explicitly $u$-equations and $u$-variables and verify that $u$-variables are unique solutions of $u$-equations, see Conjecture \ref{conj:u variables are solutions of u equations general case for Grkn}. 

In the simplest examples, the Newton polytopes for representations of quantum affine algebras defined in Section \ref{sec:Newton polytopes for quantum affine algebras} are associahedra. It would be very interesting to study the relation between the Newton polytopes for representations of quantum affine algebras and the surfacehedra defined in \cite{AFPST21_surface}.

Finally, let us discuss in some detail an exciting potential research direction which was beyond the scope of the present work to pursue.  In \cite{E2021}, $u$-equations were introduced in order to define a certain  generalized worldsheet associahedron, related to the moduli space of $n$ points in $\mathbb{P}^{k-1}$.  A parameterization of the solution to the $u$-equations was conjectured when $k=3,4$; the details are being worked out in \cite{ETh2023}. What is striking is that these $u$-equations are manifestly finite; there is a Newton polytope which is (conjecturally) simple with a face lattice which is anti-isomorphic to the noncrossing complex $\mathbf{NC}_{k,n}$.  In particular there are finitely many $u$-variables and finitely many $u$-equations.  It would be very interesting to determine if this can be explained in the context of the present paper.  Does the generalized worldsheet associahedron relate to the solution to the $u$-equations which we propose in the present work?  If so, which prime tableaux are involved?  We leave such fascinating questions to future work.

\end{document}